\documentclass[12pt]{amsart}
\usepackage{amsmath,amssymb,amsthm,longtable}
\usepackage{graphicx}
\usepackage{booktabs}
\usepackage[all]{xy}
\theoremstyle{plain}
\newtheorem{thm}{Theorem}[section]
\newtheorem{lem}[thm]{Lemma}
\newtheorem{cor}[thm]{Corollary}
\newtheorem{prop}[thm]{Proposition}
\newtheorem{defn}[thm]{Definition}
\newtheorem{fact}[thm]{Fact}
\newtheorem{example}[thm]{Example}

\newtheorem{rem}[thm]{Remark}

\newcommand{\bijarrow}{\mathrel{\stackrel{1:1}{\longleftrightarrow}}}
\newcommand{\ad}{\mathop{\mathrm{ad}}\nolimits}

\newcommand{\Id}{\text{id}}

\newcommand{\Exp}{\mathop{\mathrm{Exp}}\nolimits}
\newcommand{\Int}{\mathop{\mathrm{Int}}\nolimits}

\newcommand{\End}{\mathop{\mathrm{End}}\nolimits}
\newcommand{\Map}{\mathop{\mathrm{Map}}\nolimits}

\newcommand{\Ker}{\mathop{\mathrm{Ker}}\nolimits}

\newcommand{\rank}{\mathop{\mathrm{rank}}\nolimits}

\newcommand{\imag}{\text{im}}

\renewcommand{\theenumi}{\roman{enumi}}
\renewcommand{\labelenumi}{{\rm (}\theenumi{\rm )}}
\makeatletter
    
    \@addtoreset{equation}{section}
\usepackage{hyperref}
%
\makeatother
\begin{document}

\title{Smallest complex nilpotent orbits with real points}
\author{Takayuki Okuda}
\subjclass[2010]{Primary 17B08; Secondary 17B20, 17B22}
\keywords{nilpotent orbit, real simple Lie algebra}
\address{
Research Center for Pure and Applied Mathematics,
Graduate School of Information Sciences, Tohoku University,
6-3-09 Aoba, Aramakiaza, Aoba-ku, Sendai, Miyagi 980-8579 Japan}
\email{okuda@ims.is.tohoku.ac.jp}
\thanks{The author is supported by
Grant-in-Aid for JSPS Fellow No.25-6095}
\date{}
\maketitle 

\newcommand{\Orbit}{\mathcal{O}}
\newcommand{\R}{\mathbb{R}}
\newcommand{\C}{\mathbb{C}}
\newcommand{\minC}{\Orbit^{G_\C}_{\min}}
\newcommand{\ming}{\Orbit^{G_\C}_{\min,\mathfrak{g}}}

\begin{abstract}
Let $\mathfrak{g}$ be a non-compact real simple Lie algebra without complex structure,
and denote by $\mathfrak{g}_\mathbb{C}$ the complexification of $\mathfrak{g}$.
This paper focuses on non-zero nilpotent adjoint orbits in $\mathfrak{g}_\C$ meeting $\mathfrak{g}$.
We show that 
the poset consisting of such nilpotent orbits equipped with the closure ordering has the minimum $\ming$.
Furthermore, we determine such $\ming$ in terms of the Dynkin--Kostant classification
even in the cases where $\ming$ does not coincide with the minimal nilpotent orbit in $\mathfrak{g}_\C$.
We also prove that the intersection $\ming \cap \mathfrak{g}$ 
is the union of all minimal nilpotent orbits in $\mathfrak{g}$.
\end{abstract}


\section{Introduction and statement of main results}\label{section:intro}

Let $\mathfrak{g}$ be a non-compact real simple Lie algebra without complex structure. 
This means that the complexified Lie algebra $\mathfrak{g}_\mathbb{C}$ is simple.
Denote by $\mathcal{N}$ the nilpotent cone of $\mathfrak{g}_\mathbb{C}$ and by $\mathcal{N}/{G_\mathbb{C}}$ 
the set of complex nilpotent (adjoint) orbits of the group 
$G_\C := \Int (\mathfrak{g}_\C)$ of inner-automorphisms.

By abuse of notation, 
we write $\mathcal{N}_\mathfrak{g}/{G_\mathbb{C}}$ 
for the set consisting of complex nilpotent orbits that meet $\mathfrak{g}$. 
Note that 
\[
\mathcal{N}/{G_\mathbb{C}} \supset \mathcal{N}_\mathfrak{g}/{G_\mathbb{C}}.
\]
The finite sets $\mathcal{N}/{G_\mathbb{C}}$ and $\mathcal{N}_\mathfrak{g}/{G_\mathbb{C}}$ are both posets 
with respect to the closure ordering such that the zero-orbit $[0]$ is the minimum.
We ask what are minimal orbits in $(\mathcal{N}_\mathfrak{g}/{G_\mathbb{C}}) \setminus \{[0]\}$.
It is well known that 
$(\mathcal{N}/{G_\mathbb{C}}) \setminus \{[0]\}$ has the minimum $\minC$, 
which is called 
\emph{the minimal nilpotent orbit in $\mathfrak{g}_\mathbb{C}$}.
The minimal nilpotent orbit $\minC$ 
is the adjoint orbit that 
goes through a highest root vector with respect to a positive system 
$\Delta^+(\mathfrak{g}_\mathbb{C},\mathfrak{h}_\mathbb{C})$
where $\mathfrak{h}_\mathbb{C}$ is a Cartan subalgebra of $\mathfrak{g}_\mathbb{C}$ 
(see \cite[Chapter 4.3]{Collingwood-McGovern93} for the details).
In order to investigate $\mathcal{N}_\mathfrak{g}/{G_\mathbb{C}}$, 
we need a positive system $\Sigma^+(\mathfrak{g},\mathfrak{a})$ 
of the restricted root system of a maximally split abelian subspace $\mathfrak{a}$ of $\mathfrak{g}$
(see Section \ref{subsection:properties_of_restricted_roots} for the definition of maximally split abelian subspaces of $\mathfrak{g}$).

Our concern in this paper is with minimal orbits in $(\mathcal{N}_\mathfrak{g}/{G_\mathbb{C}}) \setminus \{[0]\}$.
Our first main result is here:

\begin{thm}\label{thm:O_min_g}
The following three conditions on 
a complex nilpotent orbit $\Orbit^{G_\mathbb{C}}$ in $\mathfrak{g}_\C$ 
with $\Orbit^{G_\C} \cap \mathfrak{g} \neq \emptyset$ are equivalent$:$
\begin{enumerate}
\item $\Orbit^{G_\mathbb{C}}$ is minimal in $(\mathcal{N}_\mathfrak{g}/{G_\mathbb{C}}) \setminus \{[0]\}$ with respect to the closure ordering.
\item The dimension of $\Orbit^{G_\mathbb{C}}$ attains its minimum in $(\mathcal{N}_\mathfrak{g}/{G_\mathbb{C}}) \setminus \{[0]\}$.
\item $\Orbit^{G_\mathbb{C}} \supset (\mathfrak{g}_\lambda \setminus \{ 0 \})$,
where $\lambda$ is the highest root of $\Sigma^+(\mathfrak{g},\mathfrak{a})$ 
and $\mathfrak{g}_\lambda$ is the root space of $\lambda$
$($the dimension of $\mathfrak{g}_\lambda$ is not necessary to be one$)$.
\end{enumerate}
Furthermore, there uniquely exists such $\Orbit^{G_\mathbb{C}}$ in $(\mathcal{N}_\mathfrak{g}/{G_\mathbb{C}}) \setminus \{[0]\}$.
\end{thm}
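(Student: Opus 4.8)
The plan is to reduce the whole statement to two assertions and then deduce everything formally. Those assertions are: \textbf{(A)} every nonzero $E\in\mathfrak{g}_\lambda$ lies in one and the same $G_\C$-orbit, to be called $\ming$; and \textbf{(B)} $\ming\subseteq\overline{\Orbit^{G_\C}}$ for every nonzero nilpotent $G_\C$-orbit $\Orbit^{G_\C}$ meeting $\mathfrak{g}$. Granting these: by (A), $\ming$ is a well-defined nonzero nilpotent orbit meeting $\mathfrak{g}$ and satisfying (iii), and any orbit satisfying (iii) contains all of $\mathfrak{g}_\lambda\setminus\{0\}$, hence equals $\ming$; so (iii) singles out exactly $\ming$, which gives the uniqueness clause and the implication (iii)$\Rightarrow$(i),(ii) (as $\ming$ is then the minimum, it is in particular minimal and of least dimension among nonzero orbits meeting $\mathfrak{g}$). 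Since for nilpotent orbits $\Orbit_1\leq\Orbit_2$ means $\Orbit_1\subseteq\overline{\Orbit_2}$ and $\dim$ is strictly increasing along this order, if $\Orbit^{G_\C}$ satisfies (i) then (B) gives $\ming\leq\Orbit^{G_\C}$ and minimality forces $\Orbit^{G_\C}=\ming$, while if $\Orbit^{G_\C}$ satisfies (ii) then (B) again gives $\ming\leq\Orbit^{G_\C}$, and $\Orbit^{G_\C}\neq\ming$ would yield $\dim\ming<\dim\Orbit^{G_\C}$, contradicting (ii). Thus (i)$\Leftrightarrow$(ii)$\Leftrightarrow$(iii)$\Leftrightarrow$($\Orbit^{G_\C}=\ming$), which is the theorem.

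To prove (A), fix the Cartan decomposition $\mathfrak{g}=\mathfrak{k}\oplus\mathfrak{p}$, $\mathfrak{a}\subseteq\mathfrak{p}$, $\mathfrak{m}=\mathfrak{z}_\mathfrak{k}(\mathfrak{a})$. For $E\in\mathfrak{g}_\lambda$ the operator $\ad E$ raises $\mathfrak{a}$-weights by $\lambda$, so it is nilpotent on $\mathfrak{g}_\C$ and $E$ is nilpotent. Let $H_\lambda\in\mathfrak{a}$ be the coroot vector of $\lambda$; since $\lambda$ is the highest restricted root, $\mathfrak{g}_\C(H_\lambda;\pm 2)=(\mathfrak{g}_{\pm\lambda})_\C$. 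I claim that for any nonzero $E_\lambda\in\mathfrak{g}_\lambda$ there is $F_\lambda\in(\mathfrak{g}_{-\lambda})_\C$ with $[E_\lambda,F_\lambda]=H_\lambda$, so that $(E_\lambda,H_\lambda,F_\lambda)$ is automatically an $\mathfrak{sl}_2$-triple. As the Killing form $B$ pairs $\mathfrak{g}_\C(H_\lambda;-2)$ with $\mathfrak{g}_\C(H_\lambda;2)$ nondegenerately, $H_\lambda\in\ad(E_\lambda)\bigl(\mathfrak{g}_\C(H_\lambda;-2)\bigr)$ as soon as $B(H_\lambda,z)=0$ for every $z\in\Ker(\ad E_\lambda)$. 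Writing $z=z_\mathfrak{a}+z_\mathfrak{m}+(\text{root part})$ with $z_\mathfrak{a}\in\mathfrak{a}_\C$, $z_\mathfrak{m}\in\mathfrak{m}_\C$, the $\mathfrak{g}_\lambda$-component of $[z,E_\lambda]$ is $\ad(z_\mathfrak{m})E_\lambda+\lambda(z_\mathfrak{a})E_\lambda$, so $[z,E_\lambda]=0$ forces $\ad(z_\mathfrak{m})E_\lambda=-\lambda(z_\mathfrak{a})E_\lambda$; splitting $z_\mathfrak{m}$ and $\lambda(z_\mathfrak{a})$ into real and imaginary parts and using that $E_\lambda$ is a real vector while $\ad(\mathfrak{m})$ (with $\mathfrak{m}$ compact) has purely imaginary spectrum, we get $\lambda(z_\mathfrak{a})=0$, hence $B(H_\lambda,z)=B(H_\lambda,z_\mathfrak{a})=0$. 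So every nonzero $E_\lambda\in\mathfrak{g}_\lambda$ admits an $\mathfrak{sl}_2$-triple with the common neutral element $H_\lambda$, whence all of them are $G_\C$-conjugate by the Mal'cev--Kostant classification of nilpotent orbits by neutral elements.

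To prove (B), let $0\neq X\in\Orbit^{G_\C}\cap\mathfrak{g}$. By the real Jacobson--Morozov theorem take a triple $(X,H,Y)$ in $\mathfrak{g}$ with $H$ hyperbolic, and conjugate by $\Int(\mathfrak{g})$ so that $H\in\overline{\mathfrak{a}^+}$; then $X\in\mathfrak{g}(H;2)\subseteq\mathfrak{n}:=\bigoplus_{\alpha\in\Sigma^+}\mathfrak{g}_\alpha$. Work inside $W:=\overline{\Int(\mathfrak{g})\cdot X}\cap\mathfrak{n}\subseteq\overline{\Orbit^{G_\C}}$, a closed cone in $\mathfrak{n}$ stable under $AN$ and under $\R_{>0}$-scaling (nilpotent orbit closures are cones). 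Let $H_0\in\mathfrak{a}$ grade $\mathfrak{n}$ by the height of restricted roots; applying $\Ad(\exp tH_0)$ and rescaling inside $W$, the top-height component $X_{(m)}$ of $X$ lies in $W\setminus\{0\}$, where $m$ is the largest height occurring in $X$. Since $\lambda$ dominates every restricted root it is both the unique root of maximal height and the unique maximal element of the poset $\Sigma^+$, so $\mathfrak{g}_\lambda$ is the top graded piece of $\mathfrak{n}$ and (a standard structural fact) equals the center of $\mathfrak{n}$; hence if $m<\mathrm{ht}(\lambda)$ then $[\mathfrak{n},X_{(m)}]\neq0$, and choosing $E\in\mathfrak{n}$ with $[E,X_{(m)}]\neq0$, the element $\Ad(\exp E)X_{(m)}\in W$ has strictly larger top height; contracting again and iterating (heights being bounded by $\mathrm{ht}(\lambda)$) we reach a nonzero $E_\lambda\in\mathfrak{g}_\lambda\cap W\subseteq\mathfrak{g}_\lambda\cap\overline{\Orbit^{G_\C}}$. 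By (A) this $E_\lambda$ lies in $\ming$, and since $\overline{\Orbit^{G_\C}}$ is $G_\C$-stable we conclude $\ming\subseteq\overline{\Orbit^{G_\C}}$.

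The main obstacle is the propagation step in (B): one must raise the top height of an element of $W$ while staying inside $W$ and \emph{using only real elements} of $\mathfrak{g}$, so that the limit lands in the real space $\mathfrak{g}_\lambda$. Staying real is genuinely necessary, since $(\mathfrak{g}_\lambda)_\C\setminus\{0\}$ need not be a single $G_\C$-orbit --- already for $\mathfrak{g}=\mathfrak{sp}(1,1)$, where $\mathfrak{g}_\lambda\cong\operatorname{Im}\mathbb{H}$ and only its real points avoid the smaller of the two nilpotent orbits met by its complexification. This step rests on the structural inputs that $\lambda$ is the unique maximal positive restricted root and that $\mathfrak{z}(\mathfrak{n})=\mathfrak{g}_\lambda$; in (A) the crux is pinning down the correct normalization $H_\lambda$ of the $\mathfrak{sl}_2$-triple and exploiting compactness of $\mathfrak{m}$. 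The remaining ingredients --- orbit closures being cones, strict monotonicity of dimension along the closure order, real Jacobson--Morozov, conjugacy of hyperbolic elements into $\overline{\mathfrak{a}^+}$, and the Mal'cev--Kostant classification --- are standard.
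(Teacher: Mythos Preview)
Your proof is correct and follows essentially the same strategy as the paper: your assertions (A) and (B) are precisely the paper's Lemmas~3.2 and~3.1, proved respectively by the same appeal to Mal'cev's theorem (via an $\mathfrak{sl}_2$-triple with neutral element $A_\lambda$) and the same contraction-and-iterate argument inside $\mathfrak{n}$. The only cosmetic differences are that the paper uses a lexicographic ordering on $\mathfrak{a}$ rather than the height grading to isolate the top component, invokes its Lemma~2.3 in place of your formulation $\mathfrak{z}(\mathfrak{n})=\mathfrak{g}_\lambda$, and cites the existence of the triple $(A_\lambda,X_\lambda,Y_\lambda)$ as a black box (Lemma~2.1) rather than giving your direct argument via compactness of $\mathfrak{m}$.
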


The unique complex nilpotent orbit in $\mathfrak{g}_\mathbb{C}$ in Theorem \ref{thm:O_min_g} will be denoted by $\Orbit^{G_{\mathbb{C}}}_{\min,\mathfrak{g}}$.
In many cases, $\ming = \minC$.

Our second main result concerns detailed properties of $\ming$ when $\ming \neq \minC$.

\begin{thm}\label{thm:WDD_of_our_orbits}
\renewcommand{\theenumi}{\arabic{enumi}}
\renewcommand{\labelenumi}{{\rm (}\theenumi{\rm )}}
\begin{enumerate}
\item \label{item:WDD:equiv_conditions_min_meets_g} 
The following five conditions on $\mathfrak{g}$ are equivalent$:$

\renewcommand{\theenumii}{\roman{enumii}}
\renewcommand{\labelenumii}{{\rm (}\theenumii{\rm )}}
\makeatletter
\renewcommand{\p@enumii}{}
\makeatother

\begin{enumerate}
\item \label{item:WDD:min=g_min} $\ming \neq \minC$.
\item \label{item:WDD:min_meets_g} $\minC \cap \mathfrak{g} = \emptyset$.
\item \label{item:WDD:one_dim_highest} $\dim_{\mathbb{R}} \mathfrak{g}_\lambda \geq 2$.
\item \label{item:WDD:extended} 
There exists a black node $\alpha$ in the Satake diagram of $\mathfrak{g}$ such that $\alpha$ has some edges connected to the added node in the extended Dynkin diagram of $\mathfrak{g}_\mathbb{C}$.
\item \label{item:WDD:classification} $\mathfrak{g}$ is isomorphic to one of 
$\mathfrak{su}^*(2k)$, $\mathfrak{so}(n-1,1)$, $\mathfrak{sp}(p,q)$, 
$\mathfrak{f}_{4(-20)}$ or $\mathfrak{e}_{6(-26)}$ where $k \geq 2, n \geq 5$ and $p, q \geq 1$.
\end{enumerate}
\item \label{item:Table} If the above equivalent conditions on $\mathfrak{g}$ are satisfied, 
then the complex nilpotent orbit $\ming$ is characterized by the weighted Dynkin diagram in Table \ref{table:O_min_g} via the Dynkin--Kostant classification.
\begin{table}[htb]
\begin{center}
\caption{List of $\ming$ for the cases $\ming \neq \minC$.}
	\begin{tabular}{llll} \hline \hline
	$\mathfrak{g}$ & $\dim_\mathbb{C} \ming$ & Weighted Dynkin diagram of $\ming$ \\ \midrule
	$\mathfrak{su}^*(2k)$ & $8k-8$ & \begin{xy}
	*++!D{0} *\cir<2pt>{}        ="A",
	(6,0) *++!D{1} *\cir<2pt>{} ="B",
	(12,0), *++!D{0} *\cir<2pt>{} ="C",
	(18,0), *++!D{0} *\cir<2pt>{} ="D",
	(24,0) = "D_1",
	(30,0) = "D_2",
	(36,0), *++!D{0} *\cir<2pt>{} ="E",
	(42,0), *++!D{0} *\cir<2pt>{} ="F",
	(48,0) *++!D{1} *\cir<2pt>{} ="G",
	(54,0) *++!D{0} *\cir<2pt>{} ="H",
	\ar@{-} "A";"B"
	\ar@{-} "B";"C"
	\ar@{-} "C";"D"
	\ar@{-} "D";"D_1"
	\ar@{.} "D_1";"D_2" ^*U{\cdots}
	\ar@{-} "D_2";"E"
	\ar@{-} "E";"F"
	\ar@{-} "F";"G"
	\ar@{-} "G";"H"
\end{xy} $( k \geq 3 )$ \\ 
& & \begin{xy}
	*++!D{0} *\cir<2pt>{}        ="A",
	(6,0) *++!D{2} *\cir<2pt>{} ="B",
	(12,0), *++!D{0} *\cir<2pt>{} ="C",
	\ar@{-} "A";"B"
	\ar@{-} "B";"C"
\end{xy} $(k = 2)$ \\ \hline
	$\mathfrak{so}(n-1,1)$ & $2n-4$ & \begin{xy}
	*++!D{2} *\cir<2pt>{}        ="A",
	(6,0) *++!D{0} *\cir<2pt>{} ="B",
	(12,0) *++!D{0}     *\cir<2pt>{} ="C",
	(18,0) = "C_1",
	(24,0) = "C_2",
	(30,0) *++!D{0} *\cir<2pt>{} ="D",
	(36,0) *++!D{0} *\cir<2pt>{} ="E",
	\ar@{-} "A";"B"
	\ar@{-} "B";"C"
	\ar@{-} "C";"C_1"
	\ar@{.} "C_1";"C_2" ^*U{\cdots}
	\ar@{-} "C_2" ; "D"
	\ar@{=>} "D";"E"
\end{xy} \quad $($\text{$n$ is odd, $n \geq 5$}$)$ \\
& & \begin{xy}
	*++!D{2} *\cir<2pt>{}        ="A",
	(6,0) *++!D{0} *\cir<2pt>{} ="B",
	(12,0) *++!D{0}     *\cir<2pt>{} ="C",
	(18,0) = "C_1",
	(24,0) = "C_2",
	(30,0) *++!D{0} *\cir<2pt>{} ="D",
	(36,3) *++!D{0} *\cir<2pt>{} ="E",
	(36,-3) *++!D{0} *\cir<2pt>{} ="F",
	\ar@{-} "A";"B"
	\ar@{-} "B";"C"
	\ar@{-} "C";"C_1"
	\ar@{.} "C_1";"C_2" ^*U{\cdots}
	\ar@{-} "C_2" ; "D"
	\ar@{-} "D";"E"
	\ar@{-} "D";"F"
\end{xy} \quad $($\text{$n$ is even, $n \geq 6$}$)$ \\ \hline
	$\mathfrak{sp}(p,q)$ & $4(p+q)-2$ & \begin{xy}
	*++!D{0} *\cir<2pt>{}        ="A",
	(6,0) *++!D{1} *\cir<2pt>{} ="B",
	(12,0) *++!D{0} *\cir<2pt>{} ="C",
	(18,0) *++!D{0} *\cir<2pt>{} ="D",
	(24,0) = "D_1",
	(30,0) = "D_2",
	(36,0) *++!D{0} *\cir<2pt>{} ="E",
	(42,0) *++!D{0} *\cir<2pt>{} ="F",
	\ar@{-} "A";"B"
	\ar@{-} "B";"C"
	\ar@{-} "C";"D"
	\ar@{-} "D";"D_1"
	\ar@{.} "D_1";"D_2" ^*U{\cdots}
	\ar@{-} "D_2" ; "E"
	\ar@{<=} "E";"F"
\end{xy} $(p+q \geq 3,~p, q \geq 1)$ \\ 
 & & \begin{xy}
	*++!D{0} *\cir<2pt>{}        ="A",
	(6,0) *++!D{2} *\cir<2pt>{} ="B",
	\ar@{<=} "A";"B"
\end{xy} $(p=q=1)$ \\ \hline
	$\mathfrak{e}_{6(-26)}$ & $32$ & 
\begin{xy}
	*++!D{1} *\cir<2pt>{}        ="A",
	(6,0) *++!D{0} *\cir<2pt>{} ="B",
	(12,0) *++!D{0}  *\cir<2pt>{} ="C",
	(18,0) *++!D{0} *\cir<2pt>{} ="D",
	(24,0) *++!D{1} *\cir<2pt>{} ="E",
	(12,-6) *++!L{0} *\cir<2pt>{} ="F",
	\ar@{-} "A";"B"
	\ar@{-} "B";"C"
	\ar@{-} "C";"D"
	\ar@{-} "D";"E"
	\ar@{-} "C";"F"
\end{xy} \\ \hline
	$\mathfrak{f}_{4(-20)}$ & $22$ & 
\begin{xy}
	*++!D{0} *\cir<2pt>{}        ="A",
	(6,0) *++!D{0} *\cir<2pt>{} ="B",
	(12,0) *++!D{0}  *\cir<2pt>{} ="C",
	(18,0) *++!D{1} *\cir<2pt>{} ="D",
	\ar@{-} "A";"B"
	\ar@{=>} "B";"C"
	\ar@{-} "C";"D"
\end{xy} \\ 
	\hline \hline
\label{table:O_min_g}
	\end{tabular}
\end{center}
\end{table}
\end{enumerate}
\end{thm}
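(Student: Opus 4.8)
Both parts rest on one description of $\ming$ through the highest restricted root $\lambda$ of $\Sigma^+(\mathfrak{g},\mathfrak{a})$. Fix a Cartan involution $\theta_c$ of $\mathfrak{g}$ with $\theta_c|_{\mathfrak{a}}=-1$ and a $\theta_c$-stable Cartan subalgebra $\mathfrak{h}$ of $\mathfrak{g}$ containing $\mathfrak{a}$, with $\mathfrak{t}:=\mathfrak{h}\cap\mathfrak{k}$, so $\mathfrak{h}=\mathfrak{t}\oplus\mathfrak{a}$; write $B$ for the Killing form, let $t_\lambda\in\mathfrak{a}$ satisfy $B(t_\lambda,\cdot)=\lambda(\cdot)$ on $\mathfrak{h}_\C$, and put $\lambda^\vee:=\tfrac{2}{\langle\lambda,\lambda\rangle}t_\lambda\in\mathfrak{a}$. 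For a non-zero $X\in\mathfrak{g}_\lambda$ the element $[X,\theta_cX]$ lies in $\mathfrak{g}_0=\mathfrak{m}\oplus\mathfrak{a}$ and is $\theta_c$-anti-invariant, hence in $\mathfrak{a}$; pairing with $H\in\mathfrak{a}$ gives $[X,\theta_cX]=B(X,\theta_cX)\,t_\lambda$ with $B(X,\theta_cX)<0$. So, after rescaling $\theta_cX$, there is $Y\in\mathfrak{g}_{-\lambda}$ with $[X,Y]=\lambda^\vee$; since $\lambda(\lambda^\vee)=2$, the triple $(X,\lambda^\vee,Y)$ is an $\mathfrak{sl}_2$-triple contained in $\mathfrak{g}$. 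By Theorem \ref{thm:O_min_g} we have $\ming=G_\C\cdot X$, and since all $\mathfrak{sl}_2$-triples through a fixed nilpotent are conjugate, $\ming$ is the complex nilpotent orbit whose Dynkin--Kostant characteristic is the $W(\mathfrak{g}_\C,\mathfrak{h}_\C)$-dominant conjugate of $\lambda^\vee\in\mathfrak{a}\subset\mathfrak{h}_\C$. Everything below is extracted from this.

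\emph{Part $(1)$.} Since the highest root $\theta$ of $\Delta^+(\mathfrak{g}_\C,\mathfrak{h}_\C)$ dominates every root, $\lambda=\theta|_{\mathfrak{a}}$, so $\mathfrak{g}_\lambda\otimes_\R\C=\bigoplus_{\beta|_{\mathfrak{a}}=\lambda}(\mathfrak{g}_\C)_\beta$ contains the highest root space $(\mathfrak{g}_\C)_\theta$. Decomposing $\mathfrak{h}_\C=\mathfrak{t}_\C\oplus\mathfrak{a}_\C$ $B$-orthogonally and writing $t_\theta=t^{\mathfrak{t}}_\theta+t_\lambda$, one gets $\langle\theta,\theta\rangle=\|t^{\mathfrak{t}}_\theta\|^2+\langle\lambda,\lambda\rangle\geq\langle\lambda,\lambda\rangle$; using that $\theta$ is a long root, a short length computation shows $\theta$ is the only root restricting to $\lambda$ precisely when $t^{\mathfrak{t}}_\theta=0$, i.e.\ when $\langle\theta,\theta\rangle=\langle\lambda,\lambda\rangle$. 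Thus: if $\dim_\R\mathfrak{g}_\lambda=1$ then $\mathfrak{g}_\lambda\otimes\C=(\mathfrak{g}_\C)_\theta$, $X$ is a highest root vector, and $\ming=\minC$; if $\dim_\R\mathfrak{g}_\lambda\geq 2$ then $\langle\theta,\theta\rangle>\langle\lambda,\lambda\rangle$, so the characteristics $\theta^\vee$ of $\minC$ and $\lambda^\vee$ of $\ming$ have unequal squared lengths $4/\langle\theta,\theta\rangle\neq 4/\langle\lambda,\lambda\rangle$, are not $W$-conjugate, and $\ming\neq\minC$. This is the equivalence of (i) and (iii). For (i) $\Leftrightarrow$ (ii): if $\minC$ meets $\mathfrak{g}$ it is a minimal element of $(\mathcal{N}_{\mathfrak{g}}/G_\C)\setminus\{[0]\}$, hence $=\ming$ by the uniqueness in Theorem \ref{thm:O_min_g}; conversely $\ming=\minC$ forces $\minC\cap\mathfrak{g}\supset\mathfrak{g}_\lambda\setminus\{0\}\neq\emptyset$. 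For (iii) $\Leftrightarrow$ (iv): $\dim_\R\mathfrak{g}_\lambda\geq 2$ iff some root $\beta<\theta$ has $\delta:=\theta-\beta$ a non-zero sum of black (i.e.\ $\mathfrak{a}$-vanishing) simple roots; as $\|t_\delta\|^2>0$ for such $\delta$ and $\theta$ is long, $\langle\beta,\beta\rangle=\langle\theta,\theta\rangle-2\langle\theta,\delta\rangle+\langle\delta,\delta\rangle$ forces $\langle\theta,\alpha^\vee\rangle>0$ for some black simple $\alpha$ occurring in $\delta$, and conversely if a black simple $\alpha$ has $\langle\theta,\alpha^\vee\rangle>0$ then $\theta-\alpha$ is a second root restricting to $\lambda$; finally $\langle\theta,\alpha^\vee\rangle>0$ is precisely the condition that $\alpha$ be joined to the affine node $-\theta$ of the extended Dynkin diagram of $\mathfrak{g}_\C$. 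This closes the cycle; (iv) $\Leftrightarrow$ (v) is then a finite check, comparing the Satake diagram of each non-compact real simple $\mathfrak{g}$ without complex structure with the neighbours of the affine node in the corresponding extended Dynkin diagram.

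\emph{Part $(2)$.} For each $\mathfrak{g}$ in (v) one computes the dominant conjugate of $\lambda^\vee$ in $\mathfrak{h}_\C$, reads off its weighted Dynkin diagram, and obtains $\dim_\C\Orbit=\dim_\C\mathfrak{g}_\C-\dim_\C\mathfrak{g}_\C(0)-\dim_\C\mathfrak{g}_\C(1)$ with $\mathfrak{g}_\C(j)$ the $j$-eigenspace of $\ad\lambda^\vee$. For $\mathfrak{su}^*(2k)\subset\mathfrak{sl}(2k,\C)$, where $\mathfrak{a}=\{\diag(a_1,a_1,\dots,a_k,a_k)\}$, one has $\lambda^\vee=\diag(1,1,0,\dots,0,-1,-1)$, the nilpotent orbit of Jordan type $[2,2,1^{2k-4}]$, with weighted Dynkin diagram $(0,1,0,\dots,0,1,0)$ (and $(0,2,0)$ when $k=2$) and $\dim_\C=8k-8$; the cases $\mathfrak{so}(n-1,1)$ ($\lambda^\vee=\diag(2,0,\dots,0,-2)$, type $[3,1^{n-3}]$, $\dim_\C=2n-4$) and $\mathfrak{sp}(p,q)$ ($\lambda^\vee=\diag(1,1,0,\dots,0,-1,-1,0,\dots,0)$, type $[2,2,1^{2(p+q)-4}]$, $\dim_\C=4(p+q)-2$) are entirely analogous, and for $\mathfrak{f}_{4(-20)}$ and $\mathfrak{e}_{6(-26)}$ one reads $\lambda^\vee$ off the explicit restricted root data inside the $F_4$- and $E_6$-root systems and conjugates it into the dominant chamber, obtaining the last two rows of Table \ref{table:O_min_g}.

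The conceptual core is the opening paragraph: that the real $\mathfrak{sl}_2$-triple attached to a non-zero vector of $\mathfrak{g}_\lambda$ can be chosen with semisimple part exactly $\lambda^\vee\in\mathfrak{a}$, so that the Dynkin--Kostant datum of $\ming$ is read straight off the restricted root system. The step demanding the most care is the root-length argument in Part $(1)$ identifying $\dim_\R\mathfrak{g}_\lambda\geq 2$ with $t^{\mathfrak{t}}_\theta\neq 0$ and with condition (iv); by contrast, (iv) $\Leftrightarrow$ (v) and all of Part $(2)$ are bounded case-by-case verifications, the only awkward instances being $\mathfrak{f}_{4(-20)}$ and $\mathfrak{e}_{6(-26)}$, for which no matrix model is available and one must argue with the exceptional root systems directly.
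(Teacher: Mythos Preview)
Your argument is correct and in several places genuinely different from the paper's.  For Part (1), the paper routes the equivalences through Proposition~4.1: it introduces the complex conjugation $\tau^*$ on $\Delta(\mathfrak{g}_\C,\mathfrak{h}_\C)$, shows that $\dim_\R\mathfrak{g}_\lambda\ge 2$ is equivalent to the highest root $\phi$ not being real, and then invokes the ``match'' criterion (Fact~4.4, quoted from \cite{Okuda13cls}) to link this to $\minC\cap\mathfrak{g}=\emptyset$; the implication to condition~(iv) passes through the diagram involution $\sigma^*$ of \cite{Helminck88} to handle arrow-linked white nodes.  Your route is more elementary and self-contained: you compare the Killing lengths of the two characteristics $\theta^\vee$ and $\lambda^\vee$ directly, and for (iii)~$\Leftrightarrow$~(iv) you argue with the difference $\delta=\theta-\beta$ and the inequality $2\langle\theta,\delta\rangle\ge\langle\delta,\delta\rangle>0$, never needing the external ``match'' theorem.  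One point you compress: that $\delta$ is supported only on \emph{black} simple roots follows because $\delta\ge 0$ and the restrictions of the white simple roots are the simple restricted roots (counted with arrow multiplicity), hence linearly independent with non-negative coefficients summing to zero.  It is also worth noting that $\lambda^\vee$ is already $\mathfrak{h}$-dominant (since $\mathfrak{a}_+=\mathfrak{h}_+\cap\mathfrak{a}$), so no extra conjugation is needed.

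For Part (2) the paper proves the closed formula $A_\lambda=H_\phi+\tau H_\phi$ (Proposition~5.1, via $\langle\phi,\tau^*\phi\rangle=0$) and then solves for the weights using an explicit basis of $\sqrt{-1}\mathfrak{t}$ (Lemma~5.4).  Your approach of writing $\lambda^\vee$ down directly in each matrix model, and reading the exceptional cases off the restricted root data, is equally valid; the paper's formula is more uniform and makes the two exceptional computations a bit cleaner, while yours avoids introducing $\tau$ altogether.
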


\renewcommand{\theenumi}{\roman{enumi}}
\renewcommand{\labelenumi}{{\rm (}\theenumi{\rm )}}

The equivalence between \eqref{item:WDD:min_meets_g} and \eqref{item:WDD:classification} in Theorem \ref{thm:WDD_of_our_orbits} \eqref{item:WDD:equiv_conditions_min_meets_g} was stated on Brylinski \cite[Theorem 4.1]{Brylinski98} without proof.
We provide a proof for the convenience of the readers. 

Our work is motivated by the recent progress 
in the theory of infinite dimensional representations.
For an irreducible (admissible) representation $\pi$ of a real reductive Lie group $G$ with its Lie algebra $\mathfrak{g}$,
one can define 
the associated variety $\mathcal{AV}(\mathrm{Ann}~\pi) (\subset \mathcal{N})$ 
of the annihilator $\mathrm{Ann}~\pi$ in the enveloping algebra $\mathcal{U}(\mathfrak{g}_\C)$.
It is known that there uniquely exists a complex nilpotent orbit $\Orbit^{G_\C}_\pi$ meeting $\mathfrak{g}$ such that $\mathcal{AV}(\mathrm{Ann}~\pi) = \overline{\Orbit^{G_\C}_\pi}$.
Half the complex dimension of $\Orbit^{G_\C}_\pi$ coincides with the Gelfand--Kirillov dimension of $\pi$.

If $\pi$ is a minimal representation in the sense that the annihilator of $\pi$ is the Joseph ideal (\cite{Joseph76}), 
then $\mathcal{AV}(\mathrm{Ann}~\pi) = \overline{\minC}$ (Vogan \cite{Vogan91}).
Hence $\minC \cap \mathfrak{g} \neq \emptyset$, 
or equivalently $\minC \cap \mathfrak{p}_\C \neq \emptyset$
by the Kostant--Sekiguchi correspondence,
where $\mathfrak{g} = \mathfrak{k} + \mathfrak{p}$ is 
a Cartan decomposition of $\mathfrak{g}$ and 
$\mathfrak{p}_\C$ denotes the complexification of $\mathfrak{p}$.
Therefore, minimal representations do not exist for simple Lie groups $G$ 
if $\minC \cap \mathfrak{g} = \emptyset$ or equivalently,
if the Lie algebra $\mathfrak{g}$ of $G$ 
is one of the five simple Lie algebras in Theorem \ref{thm:WDD_of_our_orbits}.

In the cases where $\minC \cap \mathfrak{g} = \emptyset$ or equivalently $\ming \neq \minC$,
there is no minimal representation of $G$, 
however, Hilgert, Kobayashi and M\"ollers \cite{Hilgert-Kobayashi-Mollers:minimal_representation} 
recently constructed the ``smallest'' irreducible unitary representations $\pi$ of certain families of reductive Lie groups $G$.
They proved that $\mathcal{AV}(\mathrm{Ann}~\pi) = \overline{\ming}$ for their representations $\pi$. 
Therefore, $\pi$ attains the minimum of the Gelfand--Kirillov dimensions of infinite dimensional irreducible representations of $G$.
They constructed an $L^2$-model of such representations on a 
Lagrangian subvariety of minimal nilpotent $G$-orbits in $\mathfrak{g}$.
Our results here were used in \cite[Section 2.1.3]{Hilgert-Kobayashi-Mollers:minimal_representation}
in their proof that their representation $\pi$
attains the minimum of the Gelfand--Kirillov dimension
of all infinite dimensional irreducible (admissible) representations of $G$.

Our work is also related to \cite[Corollary 5.9]{Kobayashi-Oshima2013-gK} by Kobayashi and Oshima, 
on the classification of reductive symmetric pairs $(\mathfrak{g},\mathfrak{h})$ 
for which there exists a $(\mathfrak{g},K)$-module that is discretely decomposable as an $(\mathfrak{h}, H \cap K)$-module in the sense of \cite{Kobayashi97discreteIII}.

With applications to representation theory in mind,
we also study the intersection $\ming \cap \mathfrak{g}$
as a union of real nilpotent (adjoint) orbits in $\mathfrak{g}$
in this paper.

We denote the nilpotent cone for $\mathfrak{g}$ 
by 
\[
\mathcal{N}(\mathfrak{g}) := \mathcal{N} \cap \mathfrak{g}
\]
and by $\mathcal{N}(\mathfrak{g})/G$ 
the set of real nilpotent (adjoint) orbits in $\mathfrak{g}$
by the group $G := \Int (\mathfrak{g})$ of inner-automorphisms.
The set $\mathcal{N}(\mathfrak{g})/{G}$ is a poset with respect to the closure ordering, where the zero-orbit $[0]$ in $\mathfrak{g}$ is the minimum in $\mathcal{N}(\mathfrak{g})/G$.

For each real nilpotent orbit $\Orbit^G$ in $\mathfrak{g}$,
there exists the unique complex nilpotent orbit $\Orbit^{G_\mathbb{C}}$ in $\mathfrak{g}_\mathbb{C}$ which contains $\Orbit^G$.
Then $\Orbit^G$ is a real form of $\Orbit^{G_\mathbb{C}}$.
The correspondence $\Orbit^{G}$ to $\Orbit^{G_\mathbb{C}}$ gives a surjective map 
\[
\mathcal{N}(\mathfrak{g})/{G} \twoheadrightarrow \mathcal{N}_\mathfrak{g}/{G_\mathbb{C}} \ (\subset \mathcal{N}/{G_\C}).
\]
We remark that this map needs not be injective.
It is known that for a complex nilpotent orbit $\Orbit^{G_\mathbb{C}}$ in $\mathcal{N}_{\mathfrak{g}}/{G_\mathbb{C}}$,
the intersection $\Orbit^{G_\mathbb{C}} \cap \mathfrak{g}$ split into finitely many real nilpotent orbits.

Our third main result is a characterization of minimal orbits in $(\mathcal{N}(\mathfrak{g})/{G}) \setminus \{[0]\}$ as real forms of $\ming$.
More precisely, we prove the next theorem:

\begin{thm}\label{thm:real_minimal}
The following four conditions on a real nilpotent orbit $\Orbit^G$ in $\mathfrak{g}$ are equivalent$:$
\begin{enumerate}
\item \label{item:real_minimal:real_mini} $\Orbit^G$ is minimal in $(\mathcal{N}(\mathfrak{g})/{G}) \setminus \{[0]\}$ with respect to the closure ordering.
\item \label{item:real_minimal:real_dim_mini} The dimension of $\Orbit^G$ attains its minimum in $(\mathcal{N}(\mathfrak{g})/{G}) \setminus \{ [0] \}$.
\item \label{item:real_minimal:cpx} $\Orbit^G$ is contained in $\ming$.
\item \label{item:real_minimal:real-form} $\Orbit^G$ is a real form of $\ming$.
\end{enumerate}
In particular, the real dimension of such $\Orbit^G$ is equal to the complex dimension of $\ming$.
\end{thm}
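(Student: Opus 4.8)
The plan is to reduce everything to Theorem \ref{thm:O_min_g} via the surjection $\mathcal{N}(\mathfrak{g})/G \twoheadrightarrow \mathcal{N}_\mathfrak{g}/G_\mathbb{C}$ and the fact that this map is strictly order-preserving in the sense that $\overline{\Orbit^G} \subset \overline{\Orbit'^G}$ forces $\overline{\Orbit^{G_\mathbb{C}}} \subset \overline{\Orbit'^{G_\mathbb{C}}}$, while conversely $\Orbit^{G_\mathbb{C}} = \Orbit'^{G_\mathbb{C}}$ whenever $\Orbit^G \subset \overline{\Orbit'^G}$ and both have the same dimension (an orbit and its closure inside a fixed complex orbit). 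First I would record the elementary observation that if $\Orbit^G$ is a real form of a complex orbit $\Orbit^{G_\mathbb{C}}$ then $\dim_\mathbb{R}\Orbit^G = \dim_\mathbb{C}\Orbit^{G_\mathbb{C}}$; this is standard (the real points of a complex orbit form a real-analytic submanifold of half the real dimension, equivalently $\mathfrak{g}_\mathbb{C} = \mathfrak{g} \oplus i\mathfrak{g}$ and the centralizers satisfy $\mathfrak{z}_{\mathfrak{g}_\mathbb{C}}(X) = \mathfrak{z}_\mathfrak{g}(X)_\mathbb{C}$ for $X \in \mathfrak{g}$). This immediately gives the ``in particular'' clause once the four conditions are shown equivalent, and it is what links conditions \eqref{item:real_minimal:real_dim_mini} and \eqref{item:real_minimal:real-form}.

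Next I would prove the cycle of implications. For \eqref{item:real_minimal:real-form} $\Rightarrow$ \eqref{item:real_minimal:cpx}: a real form of $\ming$ is by definition contained in $\ming$. For \eqref{item:real_minimal:cpx} $\Rightarrow$ \eqref{item:real_minimal:real-form}: if $\Orbit^G \subset \ming$, then the unique complex orbit containing $\Orbit^G$ is $\ming$ itself, so $\Orbit^G$ is a real form of $\ming$. For \eqref{item:real_minimal:real-form} $\Rightarrow$ \eqref{item:real_minimal:real_mini}: suppose $\Orbit'^G$ is a non-zero real orbit with $\overline{\Orbit'^G} \subset \overline{\Orbit^G} \subsetneq \mathcal{N}(\mathfrak{g})$ and $\Orbit^G$ a real form of $\ming$. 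Taking complexifications, $\overline{\Orbit'^{G_\mathbb{C}}} \subset \overline{\ming}$ inside $\mathcal{N}$, and since $\Orbit'^{G_\mathbb{C}} \in (\mathcal{N}_\mathfrak{g}/G_\mathbb{C}) \setminus \{[0]\}$, minimality of $\ming$ from Theorem \ref{thm:O_min_g} forces $\Orbit'^{G_\mathbb{C}} = \ming$; then $\dim_\mathbb{R}\Orbit'^G = \dim_\mathbb{C}\ming = \dim_\mathbb{R}\Orbit^G$ together with $\overline{\Orbit'^G} \subset \overline{\Orbit^G}$ gives $\Orbit'^G = \Orbit^G$, so $\Orbit^G$ is indeed minimal. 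For \eqref{item:real_minimal:real_mini} $\Rightarrow$ \eqref{item:real_minimal:cpx}: let $\Orbit^G$ be minimal in $(\mathcal{N}(\mathfrak{g})/G)\setminus\{[0]\}$; pick any real form $\Orbit_0^G$ of $\ming$ (nonempty since $\ming \in \mathcal{N}_\mathfrak{g}/G_\mathbb{C}$), which by the already-proven implication is itself minimal, hence $\overline{\Orbit^G}$ and $\overline{\Orbit_0^G}$ are incomparable unless equal; but $\overline{\Orbit^{G_\mathbb{C}}} \supset$ some nonzero orbit forces, by Theorem \ref{thm:O_min_g}, $\overline{\Orbit^{G_\mathbb{C}}} \supset \overline{\ming}$, and combined with minimality of $\dim \Orbit^G$ one checks $\Orbit^{G_\mathbb{C}} = \ming$, i.e.\ $\Orbit^G \subset \ming$. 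Finally \eqref{item:real_minimal:real_dim_mini} enters symmetrically: \eqref{item:real_minimal:real-form} $\Rightarrow$ \eqref{item:real_minimal:real_dim_mini} because any nonzero $\Orbit'^G$ has $\dim_\mathbb{R}\Orbit'^G = \dim_\mathbb{C}\Orbit'^{G_\mathbb{C}} \geq \dim_\mathbb{C}\ming$ by condition (ii) of Theorem \ref{thm:O_min_g}, with equality exactly when $\Orbit'^{G_\mathbb{C}} = \ming$; and \eqref{item:real_minimal:real_dim_mini} $\Rightarrow$ \eqref{item:real_minimal:cpx} by the same dimension comparison forcing $\Orbit^{G_\mathbb{C}} = \ming$.

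The step I expect to be the main (though still modest) obstacle is the precise bookkeeping in \eqref{item:real_minimal:real_mini} $\Rightarrow$ \eqref{item:real_minimal:cpx}: closure-minimality of a real orbit does not a priori bound its dimension, so I must argue that a real orbit whose closure contains no smaller nonzero real orbit cannot have strictly larger complex dimension than $\ming$. The cleanest route is: if $\Orbit^G$ is closure-minimal and nonzero, choose a real form $\Orbit_1^G \subset \ming$; since $\overline{\ming} \subset \overline{\Orbit^{G_\mathbb{C}}}$ (Theorem \ref{thm:O_min_g}(iii) applied to $\Orbit^{G_\mathbb{C}} \neq [0]$), the Kostant--Sekiguchi correspondence or a direct real-orbit closure argument shows $\overline{\Orbit_1^G} \subset \overline{\Orbit^G}$ for a suitable choice of $\Orbit_1^G$; then closure-minimality of $\Orbit^G$ gives $\Orbit^G = \Orbit_1^G \subset \ming$. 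I would cite the standard fact that for $\Orbit^{G_\mathbb{C}} \in \mathcal{N}_\mathfrak{g}/G_\mathbb{C}$ and $\Orbit'^{G_\mathbb{C}} \subset \overline{\Orbit^{G_\mathbb{C}}}$ with $\Orbit'^{G_\mathbb{C}} \in \mathcal{N}_\mathfrak{g}/G_\mathbb{C}$, some real form of $\Orbit'^{G_\mathbb{C}}$ lies in the closure of some real form of $\Orbit^{G_\mathbb{C}}$ (e.g.\ via $\mathfrak{sl}_2$-triples and the Sekiguchi correspondence), and otherwise assemble the remaining implications from the dimension identity and Theorem \ref{thm:O_min_g}. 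Everything else is formal.
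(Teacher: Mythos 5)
Your reductions among (ii), (iii), (iv), and the implication (iv) $\Rightarrow$ (i), are fine and are essentially what the paper's argument yields: the dimension identity $\dim_{\mathbb{R}}\Orbit^G=\dim_{\mathbb{C}}\Orbit^{G_{\mathbb{C}}}$ for a real form, together with the minimality and uniqueness statements of Theorem \ref{thm:O_min_g}, do all of that work. The problem is the step you yourself single out, (i) $\Rightarrow$ (iii). What you actually need there is: \emph{the closure of every non-zero real nilpotent orbit contains a real form of $\ming$}. You propose to obtain this by citing ``the standard fact that \dots some real form of $\Orbit'^{G_{\mathbb{C}}}$ lies in the closure of some real form of $\Orbit^{G_{\mathbb{C}}}$.'' This is not a standard fact, and even granting it, it is mis-quantified: it would only place a real form of $\ming$ inside the closure of \emph{some} real form of $\Orbit^{G_{\mathbb{C}}}$, whereas closure-minimality is a hypothesis on the particular real orbit $\Orbit^G$ you started with. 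The closure order on $\mathcal{N}(\mathfrak{g})/G$ is genuinely finer than the one on $\mathcal{N}_{\mathfrak{g}}/G_{\mathbb{C}}$ (already in $\mathfrak{sl}(2,\mathbb{R})$ the two real forms of the nonzero complex nilpotent orbit have closures meeting only in $0$), so no soft transfer of closure relations from the complex side can be expected to supply this.

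The paper closes exactly this gap with Lemma \ref{lem:highest_in_closure}: for any non-zero real nilpotent orbit $\Orbit'_0$, some non-zero highest restricted root vector $X_\lambda\in\mathfrak{g}_\lambda$ lies in $\overline{\Orbit'_0}$. Its proof is a concrete degeneration argument: decompose $X'\in\overline{\Orbit'_0}$ along the restricted root space decomposition, apply $e^{-k\lambda'(A')}\exp(\ad_{\mathfrak{g}}kA')$ for a suitable $A'\in\mathfrak{a}$ and let $k\to\infty$ to contract $X'$ onto its top restricted-root component, and then use Lemma \ref{lem:Ker_of_n} (the characterization of the highest restricted root) to push that top component all the way up to $\lambda$. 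Combined with Lemma \ref{lem:G_C-conj_of_highest_root_vectors} (all non-zero vectors of $\mathfrak{g}_\lambda$ are $G_{\mathbb{C}}$-conjugate, via $\mathfrak{sl}_2$-triples sharing the semisimple element $A_\lambda$ and Malcev's theorem), this yields (i) $\Rightarrow$ (iii) and, as a by-product, Proposition \ref{prop:G-orbits_meets_highest}. Without some version of this degeneration argument your proof does not close; everything else in your plan is correct but formal.
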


We say that a real nilpotent orbit $\Orbit^G$ in $\mathfrak{g}$ is \emph{minimal} if $\Orbit^G$ satisfies the equivalent conditions in Theorem \ref{thm:real_minimal}.
In this sense, the intersection $\ming \cap \mathfrak{g}$ is the disjoint union of all minimal real nilpotent orbits in $\mathfrak{g}$.

Our fourth main result is to determine the number of minimal real nilpotent orbits in $\mathfrak{g}$ as follows:

\begin{thm}\label{thm:number_of_G-orbits_in_minimal}
For a non-compact real simple Lie algebra $\mathfrak{g}$ without complex structure, 
\begin{multline*}
\sharp \{\text{ minimal real nilpotent orbits in $\mathfrak{g}$ }\} \\
= 
\begin{cases} 
1 \quad \text{if $(\mathfrak{g},\mathfrak{k})$ is of non-Hermitian type,} \\ 
2 \quad \text{if $(\mathfrak{g},\mathfrak{k})$ is of Hermitian type,}
\end{cases}
\end{multline*}
where $\mathfrak{g} = \mathfrak{k} + \mathfrak{p}$ is a Cartan decomposition of $\mathfrak{g}$. 
\end{thm}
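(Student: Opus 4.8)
The plan is to count the real forms of the single complex orbit $\ming$ using the Kostant--Sekiguchi correspondence together with the Cartan involution. By Theorem \ref{thm:real_minimal}, the minimal real nilpotent orbits in $\mathfrak{g}$ are exactly the real forms of $\ming$, i.e. the $G$-orbits into which $\ming \cap \mathfrak{g}$ decomposes; so the question is purely about $\#(\ming \cap \mathfrak{g})/G$. First I would invoke the Kostant--Sekiguchi correspondence, which gives a bijection between $\ming \cap \mathfrak{g}$ modulo $G$ and the set of $K_\C$-orbits in $\ming \cap \mathfrak{p}_\C$, where $\mathfrak{g} = \mathfrak{k} + \mathfrak{p}$ is a Cartan decomposition and $\mathfrak{p}_\C$ its complexification. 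So it suffices to count $K_\C$-orbits in the single $G_\C$-orbit $\ming$ that lie in $\mathfrak{p}_\C$.

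Next I would exploit the concrete description of $\ming$ from Theorem \ref{thm:O_min_g}(iii): $\ming$ is the complex orbit through any nonzero vector of $\mathfrak{g}_\lambda$, where $\lambda$ is the highest restricted root. One can choose the $\mathfrak{sl}_2$-triple adapted to $\lambda$ inside the real algebra, using a maximally split Cartan subspace $\mathfrak{a}$; the associated real nilpotent orbit through $\mathfrak{g}_\lambda\setminus\{0\}$ is then one minimal real orbit, and the issue is whether there is a second one. The key structural fact to establish is: $\ming \cap \mathfrak{g}$ splits into exactly one $G$-orbit unless $(\mathfrak{g},\mathfrak{k})$ is of Hermitian type, in which case it splits into exactly two, which under Kostant--Sekiguchi correspond to the two ``halves'' of the highest-weight and lowest-weight directions. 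Concretely, in the Hermitian case $\mathfrak{p}_\C = \mathfrak{p}^+ \oplus \mathfrak{p}^-$ as a $K_\C$-module with $\mathfrak{p}^{\pm}$ the two abelian pieces, and $\ming$ (which in the Hermitian case equals $\minC$) meets each of $\mathfrak{p}^+$ and $\mathfrak{p}^-$ in a single $K_\C$-orbit (a highest-weight orbit), and these are genuinely distinct $K_\C$-orbits because $\mathfrak{p}^+$ and $\mathfrak{p}^-$ are exchanged only by outer operations, not by $K_\C$. In the non-Hermitian case, $\mathfrak{p}_\C$ is $K_\C$-irreducible and one shows $\ming \cap \mathfrak{p}_\C$ is a single $K_\C$-orbit, e.g. by a dimension count: $\dim_\C(\ming \cap \mathfrak{p}_\C) = \tfrac{1}{2}\dim_\C \ming$, and the stabilizer computation forces transitivity, or by a direct case-by-case verification using the five exceptional algebras of Theorem \ref{thm:WDD_of_our_orbits} together with the generic case $\ming = \minC$ where the count of real forms of $\minC$ is classically known.

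The main obstacle I anticipate is the non-Hermitian claim that $\ming\cap\mathfrak{g}$ is a \emph{single} $G$-orbit: the reducibility/irreducibility of $\mathfrak{p}_\C$ as a $K_\C$-module (equivalently, the Hermitian versus non-Hermitian dichotomy) must be tied precisely to the number of $K_\C$-orbits inside this particular small orbit, and in principle a $K_\C$-irreducible $\mathfrak{p}_\C$ could still contain two $K_\C$-orbits of the relevant nilpotents. To rule this out I would combine the general principle that the number of real forms of $\minC$ is one outside Hermitian type with an explicit treatment of the five algebras where $\ming\neq\minC$: for $\mathfrak{su}^*(2k)$, $\mathfrak{so}(n-1,1)$, $\mathfrak{sp}(p,q)$, $\mathfrak{f}_{4(-20)}$, $\mathfrak{e}_{6(-26)}$ one checks directly (these are all of non-Hermitian type) that the real orbit through $\mathfrak{g}_\lambda\setminus\{0\}$ is the only real form of $\ming$, by computing $\dim_\R(\ming\cap\mathfrak{g})$ via the restricted-root data and matching it against $\dim_\C\ming$ from Table \ref{table:O_min_g}, so that a single real orbit already exhausts $\ming\cap\mathfrak{g}$. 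Assembling the Hermitian case (two orbits) and the non-Hermitian case (one orbit) then yields the stated formula.
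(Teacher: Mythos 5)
Your reduction to counting real forms of $\ming$ (via Theorem \ref{thm:real_minimal}) and the transfer to $K_\mathbb{C}$-orbits in $\ming\cap\mathfrak{p}_\mathbb{C}$ via Kostant--Sekiguchi are both legitimate, and this is a genuinely different route from the paper, which instead proves that $G$-orbits in $\ming\cap\mathfrak{g}$ are in bijection with nonzero $MA$-orbits in the highest restricted root space $\mathfrak{g}_\lambda$ (Proposition \ref{prop:MA-orbits_to_G-orbits}, resting on Lemmas \ref{lem:G-conju_to_N_G(a)-conju} and \ref{lem:N_G-congugate_in_g_lambda}) and then counts those $MA$-orbits directly. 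However, your proposal has a genuine gap precisely at the point you flag as the main obstacle: the non-Hermitian case. The dimension count you propose cannot work, because by Theorem \ref{thm:real_minimal} \emph{every} real form of $\ming$ has real dimension equal to $\dim_\mathbb{C}\ming$; hence $\ming\cap\mathfrak{g}$ is a priori a disjoint union of several equidimensional $G$-orbits, and matching $\dim_\mathbb{R}$ of one orbit against $\dim_\mathbb{C}\ming$ "exhausts" nothing. Likewise, invoking "the general principle that the number of real forms of $\minC$ is one outside Hermitian type" is circular: for the generic case $\ming=\minC$ that principle \emph{is} the statement to be proved. The only non-circular fallback you offer is a case-by-case check against the classification of real nilpotent orbits, which you do not carry out (and which the paper explicitly avoids).

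For comparison, the paper closes this gap structurally: when $\dim_\mathbb{R}\mathfrak{g}_\lambda\geq 2$ it builds a real-rank-one simple subalgebra $\mathfrak{g}_{-\lambda}\oplus[\mathfrak{g}_\lambda,\mathfrak{g}_{-\lambda}]\oplus\mathfrak{g}_\lambda$ and shows each $M_0A$-orbit in $\mathfrak{g}_\lambda\setminus\{0\}$ is open, so connectedness gives a single orbit; when $\dim_\mathbb{R}\mathfrak{g}_\lambda=1$ it shows that some $\gamma_\xi=\exp(\pi\sqrt{-1}A_\xi)\in M$ acts by $-1$ on $\mathfrak{g}_\lambda$ if and only if the restricted root system is not of type $C$ or $BC$, which is exactly the non-Hermitian condition. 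Your Hermitian-case sketch (two $K_\mathbb{C}$-orbits, one in each of $\mathfrak{p}^{\pm}$) is plausible but also incomplete as written: you would still need to show that $\minC\cap\mathfrak{p}_\mathbb{C}$ is contained in $\mathfrak{p}^{+}\cup\mathfrak{p}^{-}$ and that each intersection is a single $K_\mathbb{C}$-orbit, neither of which is immediate from $K_\mathbb{C}$-reducibility of $\mathfrak{p}_\mathbb{C}$ alone. As it stands, the proposal identifies the right reductions but does not supply a working argument for the crucial one-orbit claim.
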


Note that all of five real simple Lie algebras $\mathfrak{g}$ in Theorem $\ref{thm:WDD_of_our_orbits}$ are of non-Hermitian type,
and therefore there uniquely exists a minimal real nilpotent orbits in such $\mathfrak{g}$.

In our proof of Theorem \ref{thm:number_of_G-orbits_in_minimal},
we study $MA$-orbits in a highest root space $\mathfrak{g}_\lambda$ of $\mathfrak{g}$ 
(see Section \ref{section:real-orbits_in_min} for the notation of a group $MA$ and more details).

\newcommand\twoheaduparrow{\mathrel{\rotatebox[origin=c]{90}{$\twoheadrightarrow$}}}

Our results on real nilpotent orbits (Theorem \ref{thm:real_minimal} and \ref{thm:number_of_G-orbits_in_minimal}) yield those on $K_\mathbb{C}$-orbits on the nilpotent cone $\mathcal{N}(\mathfrak{p}_\mathbb{C})$ via the Kostant--Sekiguchi correspondence.
To be precise, we fix some notation.

Let $\mathfrak{g}= \mathfrak{k} + \mathfrak{p}$ be a Cartan decomposition of $\mathfrak{g}$,
denote its complixification by $\mathfrak{g}_\mathbb{C} = \mathfrak{k}_\mathbb{C} + \mathfrak{p}_\mathbb{C}$,
and $K_\mathbb{C}$ the connected complex subgroup of $\Int(\mathfrak{g}_\mathbb{C})$ with its Lie algebra $\mathfrak{k}_\mathbb{C}$.
We define the nilpotent cone for $\mathfrak{p}_\mathbb{C}$ by \[
\mathcal{N}(\mathfrak{p}_\mathbb{C}) := \mathcal{N} \cap \mathfrak{p}_\mathbb{C},
\]
on which $K_\mathbb{C}$ acts with finitely many orbits.
The Kostant--Sekiguchi correspondence is a bijection between two finite sets
\[
\mathcal{N}(\mathfrak{g})/{G} \bijarrow \mathcal{N}(\mathfrak{p}_\mathbb{C})/{K_\mathbb{C}}
\]
which preserves the closure ordering (Barbasch--Sepanski \cite{Barbasch-Sepanski98}).

Let us denote by $\mathcal{N}_{\mathfrak{p}_\mathbb{C}}/{G_\mathbb{C}}$ 
the subset of $\mathcal{N}/{G_\mathbb{C}}$ consisting of complex nilpotent orbits in $\mathfrak{g}_\C$ meeting $\mathfrak{p}_\C$.
Recall that 
a complex nilpotent orbit $\Orbit^{G_\mathbb{C}}$ in $\mathfrak{g}_\mathbb{C}$ meets $\mathfrak{g}$
if and only if it meets $\mathfrak{p}_\mathbb{C}$ (Sekiguchi \cite[Proposition 1.11]{Sekiguchi84}).
Thus $\mathcal{N}_{\mathfrak{p}_\mathbb{C}}/{G_\mathbb{C}}$ coincides with $\mathcal{N}_{\mathfrak{g}}/{G_\mathbb{C}}$ as a subset of $\mathcal{N}/G_\C$.

Further, for each nilpotent $K_\C$-orbit $\Orbit^{K_\mathbb{C}}$ in $\mathcal{N}(\mathfrak{p}_\C)/{K_\C}$, 
there uniquely exists a complex nilpotent $G_\C$-orbit $\Orbit^{G_\mathbb{C}}$ containing $\Orbit^{K_\mathbb{C}}$,
and the correspondence $\Orbit^{K_\mathbb{C}}$ to $\Orbit^{G_\mathbb{C}}$ gives a surjection map 
\[
\mathcal{N}(\mathfrak{p}_\mathbb{C})/{K_\mathbb{C}}
 \twoheadrightarrow  \mathcal{N}_{\mathfrak{p}_{\C}}/{G_\mathbb{C}} \ (\subset \mathcal{N}/{G_\C}).
\]
Thus, the Kostant--Sekiguchi correspondence gives the following commutative diagram:
\begin{align*}
\begin{array}{ccc}
\mathcal{N}_\mathfrak{g}/{G_\mathbb{C}} &= & \mathcal{N}_\mathfrak{p_\mathbb{C}} /{G_\mathbb{C}} \\ 
\twoheaduparrow & & \twoheaduparrow \\
\mathcal{N}(\mathfrak{g})/{G} & \bijarrow & \mathcal{N}(\mathfrak{p}_\mathbb{C})/{K_\mathbb{C}}
\end{array}
\end{align*}

Therefore, we have next two corollaries to 
Theorems \ref{thm:real_minimal} and \ref{thm:number_of_G-orbits_in_minimal}:

\begin{cor}\label{cor:minimal_in_p_C}
For a non-compact real simple Lie algebra $\mathfrak{g}$ without complex structure and its Cartan decomposition $\mathfrak{g} = \mathfrak{k} + \mathfrak{p}$, 
the following conditions on a nilpotent $K_\mathbb{C}$-orbit $\Orbit^{K_\mathbb{C}}$ in $\mathfrak{p}_\C$ are equivalent$:$
	\begin{enumerate}
	\item $\Orbit^{K_\mathbb{C}}$ is minimal in $(\mathcal{N}(\mathfrak{p}_\mathbb{C})/{K_\mathbb{C}}) \setminus \{[0]\}$ with respect to the closure ordering.
	\item The dimension of $\Orbit^{K_\mathbb{C}}$ attains its minimum in $(\mathcal{N}(\mathfrak{p}_\mathbb{C})/{K_\mathbb{C}}) \setminus \{[0]\}$.
	\item $\Orbit^{K_\mathbb{C}}$ is contained in $\ming$.
	\end{enumerate}
\end{cor}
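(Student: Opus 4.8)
The plan is to deduce all three equivalences from Theorem~\ref{thm:real_minimal} by pushing each of the conditions (i)--(iii) through the Kostant--Sekiguchi correspondence $\mathcal{N}(\mathfrak{g})/G \bijarrow \mathcal{N}(\mathfrak{p}_\C)/K_\C$. So I would fix a nilpotent $K_\C$-orbit $\Orbit^{K_\C}$ in $\mathfrak{p}_\C$ and let $\Orbit^G \in \mathcal{N}(\mathfrak{g})/G$ be the real nilpotent orbit corresponding to it. The correspondence matches the zero orbit with the zero orbit, so $\Orbit^{K_\C} \neq [0]$ if and only if $\Orbit^G \neq [0]$, and it therefore restricts to a bijection of the punctured sets.

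First I would handle condition~(i): since the Kostant--Sekiguchi bijection preserves the closure ordering (Barbasch--Sepanski \cite{Barbasch-Sepanski98}), it is a poset isomorphism $(\mathcal{N}(\mathfrak{p}_\C)/K_\C)\setminus\{[0]\} \simrightarrow (\mathcal{N}(\mathfrak{g})/G)\setminus\{[0]\}$, whence $\Orbit^{K_\C}$ is minimal on the left if and only if $\Orbit^G$ is minimal on the right; this is exactly condition~\eqref{item:real_minimal:real_mini} of Theorem~\ref{thm:real_minimal} for $\Orbit^G$. For condition~(ii) I would invoke the classical dimension formula of the Kostant--Sekiguchi correspondence, namely $\dim_\C \Orbit^{K_\C} = \tfrac12 \dim_\C \Orbit^{G_\C} = \tfrac12 \dim_\R \Orbit^{G}$, where $\Orbit^{G_\C}$ denotes the complex $G_\C$-orbit through $\Orbit^{K_\C}$; since $\dim_\C \Orbit^{K_\C}$ and $\dim_\R \Orbit^G$ differ by the fixed factor $2$ along the bijection, one attains its minimum on the punctured set exactly when the other does, which is condition~\eqref{item:real_minimal:real_dim_mini} of Theorem~\ref{thm:real_minimal} for $\Orbit^G$.

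For condition~(iii) I would read off the commutative diagram preceding the corollary: $\Orbit^{K_\C}$ and its Kostant--Sekiguchi partner $\Orbit^G$ have the same image, call it $\Orbit^{G_\C}$, under the two vertical surjections, and $\Orbit^{G_\C}$ lies in $\mathcal{N}_{\mathfrak{p}_\C}/G_\C = \mathcal{N}_\mathfrak{g}/G_\C$. Because $\ming$ is a single complex nilpotent $G_\C$-orbit (Theorem~\ref{thm:O_min_g}) and distinct $G_\C$-orbits are disjoint, the containment $\Orbit^{K_\C} \subset \ming$ holds if and only if $\Orbit^{G_\C} = \ming$, and likewise $\Orbit^G \subset \ming$ holds if and only if $\Orbit^{G_\C} = \ming$; so condition~(iii) here is equivalent to condition~\eqref{item:real_minimal:cpx} of Theorem~\ref{thm:real_minimal} for $\Orbit^G$. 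Combining the three matchings with the equivalence of \eqref{item:real_minimal:real_mini}, \eqref{item:real_minimal:real_dim_mini} and \eqref{item:real_minimal:cpx} in Theorem~\ref{thm:real_minimal} then yields the corollary. I expect no genuine obstacle here: the only ingredients beyond formal bookkeeping are the closure-ordering compatibility and the halving-of-dimension property of the Kostant--Sekiguchi correspondence, both classical.
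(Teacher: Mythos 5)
Your proposal is correct and follows essentially the same route as the paper, which derives the corollary from Theorem \ref{thm:real_minimal} by transporting each condition through the order-preserving Kostant--Sekiguchi bijection and the commutative diagram; the only ingredient you add explicitly, the Kostant--Rallis halving $\dim_\C \Orbit^{K_\C} = \tfrac12 \dim_\C \Orbit^{G_\C} = \tfrac12 \dim_\R \Orbit^{G}$, is exactly what is needed to transfer condition (ii) and is classical.
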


We say that a nilpotent $K_\mathbb{C}$-orbit $\Orbit^{K_\mathbb{C}}$ is \emph{minimal} if $\Orbit^{K_\mathbb{C}}$ satisfies the equivalent conditions on Corollary \ref{cor:minimal_in_p_C}.

\begin{cor}\label{cor:number_min_in_p_C}
\begin{multline*}
\sharp \{\text{ minimal nilpotent $K_\mathbb{C}$-orbits in $\mathfrak{p}_\mathbb{C}$ }\} \\
= 
\begin{cases} 
1 \quad \text{if $(\mathfrak{g},\mathfrak{k})$ is of non-Hermitian type,} \\ 
2 \quad \text{if $(\mathfrak{g},\mathfrak{k})$ is of Hermitian type.}
\end{cases}
\end{multline*}
\end{cor}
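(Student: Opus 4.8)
The plan is to derive Corollary~\ref{cor:number_min_in_p_C} directly from Theorem~\ref{thm:number_of_G-orbits_in_minimal} by transporting the counting statement along the Kostant--Sekiguchi correspondence. Recall the commutative diagram above: the correspondence $\mathcal{N}(\mathfrak{g})/G \bijarrow \mathcal{N}(\mathfrak{p}_\mathbb{C})/K_\mathbb{C}$ is a bijection of posets preserving the closure ordering (Barbasch--Sepanski \cite{Barbasch-Sepanski98}), and it sends $[0]$ to $[0]$. First I would observe that, since the bijection preserves both the closure ordering and the zero orbit, it restricts to a bijection between the minimal elements of $(\mathcal{N}(\mathfrak{g})/G)\setminus\{[0]\}$ and the minimal elements of $(\mathcal{N}(\mathfrak{p}_\mathbb{C})/K_\mathbb{C})\setminus\{[0]\}$. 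Equivalently, using the characterization in Corollary~\ref{cor:minimal_in_p_C}(iii) together with Theorem~\ref{thm:real_minimal}(iii), a $K_\mathbb{C}$-orbit is minimal exactly when it is contained in $\ming$, and its Kostant--Sekiguchi partner is the real nilpotent orbit contained in $\ming$; the partnership is a bijection between the two finite sets in question.

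Having established this, the count is immediate: the number of minimal nilpotent $K_\mathbb{C}$-orbits in $\mathfrak{p}_\mathbb{C}$ equals the number of minimal real nilpotent orbits in $\mathfrak{g}$, and the latter is given by Theorem~\ref{thm:number_of_G-orbits_in_minimal}, namely $1$ in the non-Hermitian case and $2$ in the Hermitian case. So I would simply write: by the closure-order-preserving bijection, $\sharp\{\text{minimal nilpotent $K_\mathbb{C}$-orbits in }\mathfrak{p}_\mathbb{C}\} = \sharp\{\text{minimal real nilpotent orbits in }\mathfrak{g}\}$, then invoke Theorem~\ref{thm:number_of_G-orbits_in_minimal}.

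There is essentially no obstacle here; the only point requiring a line of care is to confirm that ``minimal in the sense of Corollary~\ref{cor:minimal_in_p_C}'' really does match ``image under Kostant--Sekiguchi of a minimal real orbit.'' This follows from Corollary~\ref{cor:minimal_in_p_C} itself (whose proof, via the same diagram and Theorem~\ref{thm:real_minimal}, is the genuinely substantive step) rather than needing anything new: once Corollary~\ref{cor:minimal_in_p_C} is in hand, both notions of minimality are characterized by containment in $\ming$, and since $\ming \cap \mathfrak{p}_\mathbb{C}$ corresponds under Kostant--Sekiguchi to $\ming \cap \mathfrak{g}$ orbit-by-orbit, the set of minimal $K_\mathbb{C}$-orbits is in bijection with the set of minimal real orbits. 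Thus the corollary is a one-paragraph consequence of results already proved.

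\begin{proof}[Proof of Corollary~\ref{cor:number_min_in_p_C}]
By Corollary~\ref{cor:minimal_in_p_C}, a nilpotent $K_\mathbb{C}$-orbit $\Orbit^{K_\mathbb{C}}$ in $\mathfrak{p}_\mathbb{C}$ is minimal if and only if $\Orbit^{K_\mathbb{C}} \subset \ming$, and by Theorem~\ref{thm:real_minimal} a real nilpotent orbit $\Orbit^G$ in $\mathfrak{g}$ is minimal if and only if $\Orbit^G \subset \ming$. The Kostant--Sekiguchi correspondence gives an order-preserving bijection $\mathcal{N}(\mathfrak{g})/G \bijarrow \mathcal{N}(\mathfrak{p}_\mathbb{C})/K_\mathbb{C}$ which maps $[0]$ to $[0]$; since both $\ming \cap \mathfrak{g}$ and $\ming \cap \mathfrak{p}_\mathbb{C}$ consist precisely of the orbits lying below (hence, being minimal, at the same level as) the orbits contained in $\ming$, this bijection restricts to a bijection between the set of minimal real nilpotent orbits in $\mathfrak{g}$ and the set of minimal nilpotent $K_\mathbb{C}$-orbits in $\mathfrak{p}_\mathbb{C}$. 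Therefore these two sets have the same cardinality, and the claim follows from Theorem~\ref{thm:number_of_G-orbits_in_minimal}.
\end{proof}
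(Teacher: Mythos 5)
Your proposal is correct and follows exactly the route the paper intends: the paper states this corollary as an immediate consequence of Theorem \ref{thm:number_of_G-orbits_in_minimal} via the Kostant--Sekiguchi order-preserving bijection and the commutative diagram, which is precisely what you spell out. No gap; the only cosmetic remark is that the parenthetical ``lying below (hence, being minimal, at the same level as)'' is awkwardly phrased, but the underlying argument --- that a poset isomorphism fixing $[0]$ matches minimal elements of the two punctured posets, compatibly with the characterization by containment in $\ming$ --- is sound.
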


\begin{rem}
\begin{itemize}
\item A part of our main results, e.g.~Theorem $\ref{thm:WDD_of_our_orbits}$ \eqref{item:Table} and Theorem $\ref{thm:number_of_G-orbits_in_minimal}$, 
could be proved by using the classification of real nilpotent orbits in $\mathfrak{g}$
$($see Remark $\ref{rem:E6_Djokovic}$ for more details$)$.
In this paper, our proof does not rely on the classification of real nilpotent orbits.
\item Theorem $\ref{thm:number_of_G-orbits_in_minimal}$ 
and Corollary $\ref{cor:number_min_in_p_C}$ should be known to experts.
In particular, 
the claim of \cite[Proposition 2.2]{Kobayashi-Oshima2013-gK} includes 
Corollary $\ref{cor:number_min_in_p_C}$.
For the sake of completeness, we give a proof of Theorem $\ref{thm:number_of_G-orbits_in_minimal}$ in this paper.
\end{itemize}
\end{rem}

The paper is organized as follows.
In Section \ref{section:preliminary},
we recall the definition of weighted Dynkin diagrams of complex nilpotent orbits in complex semisimple Lie algebras 
and some well-known facts for a highest root of a restricted root system of $(\mathfrak{g},\mathfrak{a})$.
We prove Theorems \ref{thm:O_min_g} and \ref{thm:real_minimal} in Section \ref{section:proof_of_MainThm}.
In Section \ref{section:nilp_and_real}, 
we give a proof of the first claim of Theorem \ref{thm:WDD_of_our_orbits}.
We determine the weighted Dynkin diagrams of $\ming$ in Section \ref{section:WDD_of_min}.
Finally, we give a proof of Theorem \ref{thm:number_of_G-orbits_in_minimal} in Section \ref{section:real-orbits_in_min}.

\section{Preliminary results}\label{section:preliminary}

\subsection{Weighted Dynkin diagrams of complex nilpotent orbits}\label{subsection:Hyperbolic_orbits}\label{subsection:WDD_of_nilp}

Let $\mathfrak{g}_\mathbb{C}$ be a complex semisimple Lie algebra.
In this subsection, we recall the definition of weighted Dynkin diagrams of complex nilpotent orbits in $\mathfrak{g}_\mathbb{C}$.

Let us fix a Cartan subalgebra $\mathfrak{h}_\mathbb{C}$ of $\mathfrak{g}_\mathbb{C}$. 
We denote by $\Delta(\mathfrak{g}_\mathbb{C},\mathfrak{h}_\mathbb{C})$ the root system for $(\mathfrak{g}_\mathbb{C},\mathfrak{h}_\mathbb{C})$. 
Then the root system $\Delta(\mathfrak{g}_\mathbb{C},\mathfrak{h}_\mathbb{C})$ becomes a subset of the dual space $\mathfrak{h}^*$ of 
\[
\mathfrak{h} := 
\{\, H \in \mathfrak{h}_\mathbb{C} \mid \alpha (H) \in \mathbb{R} \ \text{for any } \alpha \in \Delta(\mathfrak{g}_\mathbb{C},\mathfrak{h}_\mathbb{C}) \,\}.
\]
We write $W(\mathfrak{g}_\mathbb{C},\mathfrak{h}_\mathbb{C})$ for the Weyl group of $\Delta(\mathfrak{g}_\mathbb{C},\mathfrak{h}_\mathbb{C})$ acting on $\mathfrak{h}$.
Take a positive system $\Delta^+(\mathfrak{g}_\mathbb{C},\mathfrak{h}_\mathbb{C})$ of the root system $\Delta(\mathfrak{g}_\mathbb{C},\mathfrak{h}_\mathbb{C})$.
Then a closed Weyl chamber 
\[
\mathfrak{h}_+ := \{\, H \in \mathfrak{h} \mid \alpha (H) \geq 0 \ \text{for any } \alpha \in \Delta^+(\mathfrak{g}_\mathbb{C},\mathfrak{h}_\mathbb{C}) \,\}
\]
becomes a fundamental domain of $\mathfrak{h}$ for the action of $W(\mathfrak{g}_\mathbb{C},\mathfrak{h}_\mathbb{C})$.

Let $\Pi$ be the simple system of $\Delta^+(\mathfrak{g}_\mathbb{C},\mathfrak{h}_\mathbb{C})$.
Then for each $H \in \mathfrak{h}$, we define a map by
\[
\Psi_H : \Pi \rightarrow \mathbb{R},\ \alpha \mapsto \alpha(H).
\]
We call $\Psi_H$ the weighted Dynkin diagram corresponding to $H \in \mathfrak{h}$, and $\alpha(H)$ the weight on a node $\alpha \in \Pi$ of the weighted Dynkin diagram.
Since $\Pi$ is a basis of $\mathfrak{h}^*$, 
the map
\[
\Psi: \mathfrak{h} \rightarrow \Map(\Pi,\mathbb{R}),\ H \mapsto \Psi_H
\]
is bijective.
Furthermore, 
\[
\mathfrak{h}_+ \rightarrow \Map(\Pi,\mathbb{R}_{\geq 0}),\ H \mapsto \Psi_H
\]
is also bijective.

A triple $(H,X,Y)$ is said to be an $\mathfrak{sl}_2$-triple in $\mathfrak{g}_\mathbb{C}$ if 
\[
[H,X] = 2X,\ [H,Y] = -2Y,\ [X,Y] = H \quad (H,X,Y \in \mathfrak{g}_\mathbb{C}).
\]
For any $\mathfrak{sl}_2$-triple $(H,X,Y)$ in $\mathfrak{g}_\mathbb{C}$, 
the elements $X$ and $Y$ are nilpotent in $\mathfrak{g}_\mathbb{C}$, 
and $H$ is hyperbolic in $\mathfrak{g}_\mathbb{C}$, 
i.e. $\ad_{\mathfrak{g}_\mathbb{C}}H \in \End(\mathfrak{g}_\mathbb{C})$ 
is diagonalizable with only real eigenvalues.

Combining the Jacobson--Morozov theorem with 
the results of Kostant \cite{Kostant59}, 
for each complex nilpotent orbit $\Orbit^{G_\mathbb{C}}$, 
there uniquely exists an element $H_{\Orbit}$ of $\mathfrak{h}_+$ with the following property: 
There exists $X,Y \in \Orbit^{G_\mathbb{C}}$ such that $( H_{\Orbit},X,Y )$ is an $\mathfrak{sl}_2$-triple in $\mathfrak{g}_\mathbb{C}$.
Furthermore, by the results of Malcev \cite{Malcev50}, the following map is injective: 
\[
\mathcal{N}/{G_\C}
\hookrightarrow 
\mathfrak{h}_+,\ \Orbit^{G_\mathbb{C}} \mapsto H_{\Orbit},
\]
where $\mathcal{N}/{G_\C}$ denotes the set of all complex nilpotent orbits in $\mathfrak{g}_\C$.
For each complex nilpotent orbit $\Orbit^{G_\mathbb{C}}$,
the weighted Dynkin diagram corresponding to $H_{\Orbit}$ is called 
the weighted Dynkin diagram of $\Orbit^{G_\mathbb{C}}$.
Dynkin \cite{Dynkin52eng} classified all such weighted Dynkin diagrams for each complex simple Lie algebra $\mathfrak{g}_\mathbb{C}$
as a classification of 
three dimensional simple subalgebras of $\mathfrak{g}_\C$
(see also \cite{Bala-Carter76} for more details).
In particular, by his results, any weight of the weighted Dynkin diagram of $\Orbit^{G_\mathbb{C}}$ is given by $0$, $1$ or $2$ for any complex nilpotent orbit $\Orbit^{G_\mathbb{C}}$.

In the rest of this subsection, we suppose that $\mathfrak{g}_\mathbb{C}$ is simple.
Let $\phi$ be the highest root of $\Delta^+(\mathfrak{g}_\mathbb{C},\mathfrak{h}_\mathbb{C})$.
Then the minimal nilpotent orbit in $\mathfrak{g}_\mathbb{C}$ can be written by 
\[
\minC = G_\mathbb{C} \cdot ((\mathfrak{g}_\mathbb{C})_\phi \setminus \{0\}),
\]
where $(\mathfrak{g}_\mathbb{C})_\phi$ is the root space of $\phi$ in $\mathfrak{g}_\mathbb{C}$.
We denote the coroot of $\phi$ by $H_{\phi}$.
That is, $H_\phi$ is the unique element in $\mathfrak{h}$ with 
\[
\alpha(H_{\phi}) = \frac{2 \langle \alpha, \phi \rangle}{\langle \phi,\phi \rangle} \quad \text{for any } \alpha \in \mathfrak{h}^*,
\]
where $\langle\ ,\ \rangle$ is the inner product on $\mathfrak{h}^*$ induced by the Killing form $B_\C$ on $\mathfrak{g}_\mathbb{C}$.
Since $\phi$ is dominant, $H_{\phi}$ is in $\mathfrak{h}_+$.
Furthermore, $H_{\phi}$ is the hyperbolic element corresponding to $\minC$ since we can find $X_\phi \in \mathfrak{g}_\phi$, $Y_\phi \in \mathfrak{g}_{-\phi}$ such that $(H_{\phi}, X_\phi,Y_\phi)$ is an $\mathfrak{sl}_2$-triple.
Therefore, the weighted Dynkin diagram of $\minC$ is 
\begin{align}
\Psi_{H_{\phi}} : \Pi \rightarrow \mathbb{R}_{\geq 0},\quad \alpha \mapsto \frac{2 \langle \alpha, \phi \rangle}{\langle \phi,\phi \rangle}. \label{def:smallest_and_extended}
\end{align}
In particular, for the cases where $\rank \mathfrak{g}_\mathbb{C} \geq 2$ 
i.e.~$\mathfrak{g}_\mathbb{C}$ is not isomorphic to $\mathfrak{sl}(2,\mathbb{C})$, 
we observe that the weight on $\alpha$ of the weighted Dynkin diagram of $\minC$ is $1$ [resp. $0$] if and only if the nodes $\alpha$ and $-\phi$ are connected [resp. disconnected] by some edges in the extended Dynkin diagram of $\mathfrak{g}_\mathbb{C}$.
The weighted Dynkin diagram of $\minC$ for each simple $\mathfrak{g}_\mathbb{C}$ can be found 
in \cite[Chapter 5.4 and 8.4]{Collingwood-McGovern93} 
(see also Table \ref{table:Satake_diagrams} in Section \ref{subsection:Satake_diagrams}).

\subsection{Highest roots of restricted root systems}\label{subsection:properties_of_restricted_roots}

In this subsection we recall some well-known facts, which will be used for proofs of Theorems \ref{thm:O_min_g} and \ref{thm:real_minimal}, for a highest root of a restricted root system of real semisimple Lie algebra 
without proof. 

Let $\mathfrak{g}_\mathbb{C}$ be a complex simple Lie algebra and $\mathfrak{g}$ a non-compact real form of $\mathfrak{g}$ with a Cartan decomposition $\mathfrak{g} = \mathfrak{k} + \mathfrak{p}$.
We fix a maximal abelian subspace $\mathfrak{a}$ of $\mathfrak{p}$, which is called a \emph{maximally split abelian subspace} of $\mathfrak{g}$, 
and write $\Sigma(\mathfrak{g},\mathfrak{a})$ 
for the restricted root system for $(\mathfrak{g},\mathfrak{a})$.
For each restricted root $\xi$ of $\Sigma(\mathfrak{g},\mathfrak{a})$,
we denote by $A_{\xi} \in \mathfrak{a}$ the coroot of $\xi$.

Then the lemma below holds:

\begin{lem}\label{lem:sl_2_in_g}
For any restricted root $\xi$ of $\Sigma(\mathfrak{g},\mathfrak{a})$ and any non-zero root vector $X_\xi$ in $\mathfrak{g}_\xi$,
there exists $Y_\xi \in \mathfrak{g}_{-\xi}$ such that 
$( A_{\xi}, X_\xi,Y_\xi )$ is an $\mathfrak{sl}_2$-triple in $\mathfrak{g}$.
\end{lem}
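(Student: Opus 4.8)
The plan is to prove Lemma \ref{lem:sl_2_in_g} by constructing the $\mathfrak{sl}_2$-triple inside the three-dimensional subalgebra attached to the restricted root $\xi$. First I would recall the standard structure of restricted root spaces: for $\xi \in \Sigma(\mathfrak{g},\mathfrak{a})$ one has $[\mathfrak{g}_\xi,\mathfrak{g}_{-\xi}] \subset \mathfrak{a} \oplus (\mathfrak{m})$ where $\mathfrak{m} = Z_{\mathfrak{k}}(\mathfrak{a})$, but the $\mathfrak{a}$-component of $[X_\xi, Y]$ for $Y \in \mathfrak{g}_{-\xi}$ is controlled by the Killing form via $B(H,[X_\xi,Y]) = B([H,X_\xi],Y) = \xi(H)\,B(X_\xi,Y)$ for $H \in \mathfrak{a}$. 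So the essential point is to produce $Y_\xi \in \mathfrak{g}_{-\xi}$ with $B(X_\xi, Y_\xi)$ normalized correctly and with $[X_\xi, Y_\xi]$ actually landing in $\mathfrak{a}$ (no $\mathfrak{m}$-component), namely equal to $A_\xi$.

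The clean way to do this is to invoke a Cartan involution $\theta$ compatible with the decomposition $\mathfrak{g} = \mathfrak{k}\oplus\mathfrak{p}$, so that $\theta(\mathfrak{g}_\xi) = \mathfrak{g}_{-\xi}$, and use the fact that $B_\theta(Z,W) := -B(Z,\theta W)$ is positive definite on $\mathfrak{g}$. Given a nonzero $X_\xi \in \mathfrak{g}_\xi$, set $Y_\xi := c\,\theta X_\xi \in \mathfrak{g}_{-\xi}$ for a positive scalar $c$ to be chosen. Then $[X_\xi, Y_\xi] = c\,[X_\xi, \theta X_\xi]$, and for $H \in \mathfrak{a}$ one computes $B(H, [X_\xi, \theta X_\xi]) = \xi(H)\,B(X_\xi, \theta X_\xi) = -\xi(H)\,B_\theta(X_\xi, X_\xi)$, which is a nonzero multiple of $\xi(H)$; moreover $\theta[X_\xi,\theta X_\xi] = [\theta X_\xi, X_\xi] = -[X_\xi,\theta X_\xi]$, so $[X_\xi,\theta X_\xi] \in \mathfrak{p}$, and being $\mathfrak{a}$-invariant (it commutes with $\mathfrak{a}$ up to the weight computation — more precisely $[\mathfrak{g}_\xi,\mathfrak{g}_{-\xi}]\cap\mathfrak{p}\subset\mathfrak{a}$) it lies in $\mathfrak{a}$. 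Since $\xi(A_\xi) = \langle \xi,\xi\rangle \cdot (2/\langle\xi,\xi\rangle) = 2 \ne 0$ and $A_\xi$ spans the line in $\mathfrak{a}$ dual to $\xi$, we get $[X_\xi, Y_\xi] = c'\,A_\xi$ for some $c' > 0$ depending on $c$; choosing $c$ so that $c' = 1$ gives $[X_\xi, Y_\xi] = A_\xi$. The bracket relations $[A_\xi, X_\xi] = \xi(A_\xi) X_\xi = 2 X_\xi$ and $[A_\xi, Y_\xi] = -\xi(A_\xi) Y_\xi = -2 Y_\xi$ are then automatic from the coroot normalization.

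The main obstacle — really the only subtle point — is verifying that $[X_\xi, \theta X_\xi]$ has no component in $\mathfrak{m} = Z_{\mathfrak{k}}(\mathfrak{a})$, i.e.\ that it genuinely lies in $\mathfrak{a}$ rather than in the larger space $\mathfrak{a}\oplus\mathfrak{m}$; the argument above via $\theta$-antisymmetry handles this by forcing the bracket into $\mathfrak{p}$, after which $\mathfrak{p}\cap Z_{\mathfrak{g}}(\mathfrak{a}) = \mathfrak{a}$ (maximality of $\mathfrak{a}$) finishes it. I would also remark that one must use a $\theta$ fixing $\mathfrak{a}$ pointwise-inverted appropriately, which is standard. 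This is exactly the classical construction of $\mathfrak{sl}_2$-triples from restricted roots (see e.g.\ Knapp's book), so I would present it tersely, emphasizing that the only input beyond generalities is the positive-definiteness of $B_\theta$ and the maximality of $\mathfrak{a}$ in $\mathfrak{p}$.
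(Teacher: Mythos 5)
Your construction is correct and is exactly the classical argument (as in Knapp) that the paper itself relies on: Lemma \ref{lem:sl_2_in_g} is stated there explicitly \emph{without proof} as a well-known fact, so there is no authorial proof to compare against, and your write-up supplies the standard one. The key steps all check out: $\theta(\mathfrak{g}_\xi)=\mathfrak{g}_{-\xi}$, the $\theta$-antisymmetry $\theta[X_\xi,\theta X_\xi]=-[X_\xi,\theta X_\xi]$ forces the bracket into $\mathfrak{p}\cap Z_{\mathfrak{g}}(\mathfrak{a})=\mathfrak{a}$, and the Killing-form computation pins it down as a nonzero multiple of $A_\xi$. The only blemish is a sign: since $B(X_\xi,\theta X_\xi)=-B_\theta(X_\xi,X_\xi)<0$, the bracket $[X_\xi,\theta X_\xi]$ is a \emph{negative} multiple of $A_\xi$, so the normalizing scalar $c$ must be taken negative (equivalently, use $Y_\xi=-c\,\theta X_\xi$ with $c>0$); this is harmless for the existence statement but your claim that $c'>0$ when $c>0$ is backwards.
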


We fix an ordering on $\mathfrak{a}$ and write $\Sigma^+(\mathfrak{g},\mathfrak{a})$ for the positive system of $\Sigma(\mathfrak{g},\mathfrak{a})$ corresponding to the ordering on $\mathfrak{a}$.
We denote by $\lambda$ the highest root of $\Sigma^+(\mathfrak{g},\mathfrak{a})$ with respect to the ordering on $\mathfrak{a}$.
Next lemma claims that the highest root $\lambda$ depends only on the positive system $\Sigma^+(\mathfrak{g},\mathfrak{a})$ but not on the ordering on $\mathfrak{a}$:

\begin{lem}\label{lem:restricted_highest_root}
The highest root $\lambda$ of $\Sigma^+(\mathfrak{g},\mathfrak{a})$ is 
the unique dominant longest root of $\Sigma(\mathfrak{g},\mathfrak{a})$.
\end{lem}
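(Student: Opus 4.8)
The plan is to deduce the lemma from standard facts about abstract root systems, using only that $\Sigma := \Sigma(\mathfrak{g},\mathfrak{a})$ is an \emph{irreducible} root system, possibly non-reduced of type $BC$; this irreducibility is a classical consequence of the simplicity of $\mathfrak{g}$ (equivalently of $\mathfrak{g}_{\mathbb{C}}$) and may be quoted from the standard references on restricted root systems. First I would record that the highest root is intrinsic to the positive system: writing $\preceq$ for the partial order on $\Sigma$ in which $\mu \preceq \nu$ means that $\nu - \mu$ is a non-negative integral combination of the simple roots of $\Sigma^+(\mathfrak{g},\mathfrak{a})$, an irreducible root system has a greatest element $\theta$, i.e. $\theta \succeq \mu$ for every $\mu \in \Sigma^+(\mathfrak{g},\mathfrak{a})$. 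Since any ordering on $\mathfrak{a}$ inducing $\Sigma^+(\mathfrak{g},\mathfrak{a})$ makes each simple root positive, $\theta - \mu$ is positive for that ordering whenever $\mu \neq \theta$, so $\theta$ is the maximum of $\Sigma$ for the ordering; hence $\lambda = \theta$, independently of the choice of ordering.

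Next I would check that $\lambda$ is dominant. If $\langle \lambda, \beta \rangle < 0$ for some simple root $\beta$ of $\Sigma^+(\mathfrak{g},\mathfrak{a})$, then $\lambda \neq -\beta$ and $\lambda + \beta$ is a root, contradicting the $\preceq$-maximality of $\lambda$; therefore $\langle \lambda, \beta \rangle \geq 0$ for all simple $\beta$, i.e. $\lambda$ lies in the closed dominant chamber. (In the non-reduced case one uses that the simple roots of $\Sigma$ and their reflections already generate the Weyl group $W(\Sigma)$, so being dominant for the simple roots is the same as being dominant for the full closed chamber.) Then I would show that $\lambda$ is a longest root: for any dominant $\mu \in \Sigma^+(\mathfrak{g},\mathfrak{a})$, write $\lambda - \mu = \sum_i d_i \beta_i$ with $d_i \geq 0$; then
\[
\langle \lambda,\lambda\rangle - \langle \mu,\mu\rangle = \langle \lambda-\mu,\lambda\rangle + \langle \lambda-\mu,\mu\rangle = \sum_i d_i \langle \beta_i,\lambda\rangle + \sum_i d_i \langle \beta_i,\mu\rangle \geq 0,
\]
both sums being non-negative because $\lambda$ and $\mu$ are dominant. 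As every root of $\Sigma$ is $W(\Sigma)$-conjugate to a dominant one and $W(\Sigma)$ preserves lengths, $\langle \lambda,\lambda\rangle \geq \langle \mu,\mu\rangle$ for every $\mu \in \Sigma$, so $\lambda$ is a longest root.

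Finally, for uniqueness I would invoke that in an irreducible root system $W(\Sigma)$ acts transitively on the set of roots of any fixed length and that the closed dominant chamber is a fundamental domain for $W(\Sigma)$; hence there is exactly one dominant root of maximal length, and since $\lambda$ is dominant and longest it is this root, which proves the lemma. The routine parts are textbook; the only point needing a little care is the possibly non-reduced case, where $\Sigma$ has type $BC_n$: one must confirm that ``longest root'' is the right notion there (the longest roots being the divisible ones, with $\lambda$ the highest among them) and that the transitivity-on-fixed-length and fundamental-domain statements still apply, which they do since $W(BC_n)$ coincides with the Weyl group of $B_n$. Alternatively, the whole lemma can simply be cited from Bourbaki's treatment of root systems together with the standard structure theory of $(\mathfrak{g},\mathfrak{a})$.
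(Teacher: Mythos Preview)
Your argument is correct. Note, however, that the paper does not actually prove this lemma: Section~\ref{subsection:properties_of_restricted_roots} explicitly states that Lemmas~\ref{lem:sl_2_in_g}--\ref{lem:Ker_of_n} are recalled ``without proof'' as well-known facts. So there is no proof in the paper to compare against; you have supplied the standard root-system argument (highest root is $\preceq$-maximal hence dominant, a length computation shows it is longest, and transitivity of $W(\Sigma)$ on roots of fixed length together with the fundamental-domain property gives uniqueness), which is exactly what one would cite from Bourbaki. Your handling of the non-reduced $BC_n$ case is also appropriate.
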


The following lemma gives a characterization of the highest root $\lambda$ of $\Sigma^+(\mathfrak{g},\mathfrak{a})$:

\begin{lem}\label{lem:Ker_of_n}
Let $\xi$ be a root of $\Sigma(\mathfrak{g},\mathfrak{a})$.
If $\xi$ is not the highest root, 
then for any non-zero root vector $X_\xi$ in $\mathfrak{g}_\xi$, 
there exists a positive root $\eta$ in $\Sigma^+(\mathfrak{g},\mathfrak{a})$ and 
a root vector $X_\eta \in \mathfrak{g}_\eta$ such that $[X_\xi,X_\eta] \neq 0$.
In particular, $\xi$ is the highest root 
if and only if 
$\xi + \eta \in \mathfrak{a}^*$ is not a root of $\Sigma(\mathfrak{g},\mathfrak{a})$ 
for any $\eta \in \Sigma^+(\mathfrak{g},\mathfrak{a})$. 
\end{lem}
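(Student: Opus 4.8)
The plan is to deduce the ``if and only if'' statement from the first assertion together with one easy direction, and to prove the first assertion by distinguishing whether $\xi$ is a negative or a positive root. If $-\xi \in \Sigma^+(\mathfrak{g},\mathfrak{a})$, then Lemma~\ref{lem:sl_2_in_g} gives $Y_\xi \in \mathfrak{g}_{-\xi}$ with $(A_\xi, X_\xi, Y_\xi)$ an $\mathfrak{sl}_2$-triple, so $[X_\xi, Y_\xi] = A_\xi \neq 0$ and one takes $\eta := -\xi$, $X_\eta := Y_\xi$; so the substantive case is $\xi \in \Sigma^+(\mathfrak{g},\mathfrak{a})$ with $\xi \neq \lambda$, and I restrict to positive $\xi$ from now on (which is also the range in which the equivalence is a genuine statement). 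For that equivalence: if $\xi = \lambda$, then for every $\eta \in \Sigma^+(\mathfrak{g},\mathfrak{a})$ the element $\xi + \eta$ strictly dominates $\lambda$ and hence is not a root, by Lemma~\ref{lem:restricted_highest_root}; conversely, granting the first assertion, for $\xi \neq \lambda$ we obtain $\eta$ and $X_\eta$ with $0 \neq [X_\xi, X_\eta] \in \mathfrak{g}_{\xi + \eta}$, so $\xi + \eta \in \Sigma(\mathfrak{g},\mathfrak{a})$. Thus it remains to show: for $\xi \in \Sigma^+(\mathfrak{g},\mathfrak{a})$ with $\xi \neq \lambda$ and any $0 \neq X_\xi \in \mathfrak{g}_\xi$, there are $\eta \in \Sigma^+(\mathfrak{g},\mathfrak{a})$ and $X_\eta \in \mathfrak{g}_\eta$ with $[X_\xi, X_\eta] \neq 0$.

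This I would prove in two stages. \emph{Stage~1 (combinatorics of $\Sigma(\mathfrak{g},\mathfrak{a})$).} Since $\mathfrak{g}_{\mathbb{C}}$ is simple, $\Sigma(\mathfrak{g},\mathfrak{a})$ is an irreducible (possibly non-reduced) root system, and Lemma~\ref{lem:restricted_highest_root} tells us that its highest root $\lambda$ dominates every positive root; hence $\lambda - \xi$ is a non-zero non-negative integer combination $\sum_\beta c_\beta \beta$ of simple roots. From $0 < \langle \lambda - \xi, \lambda - \xi\rangle = \sum_\beta c_\beta \langle \beta, \lambda - \xi\rangle$ one gets a simple root $\beta$ with $c_\beta > 0$ and $\langle \beta, \lambda - \xi\rangle > 0$, hence either $\langle \beta, \xi\rangle < 0$ (so $\xi + \beta$ is a root, and we stop) or $\langle \beta, \lambda\rangle > 0$ (so $\lambda - \beta$ is a root with $\lambda - \beta \geq \xi$, and we conclude by a downward induction on the height of $\lambda - \xi$). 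The outcome is a simple root $\alpha \in \Sigma^+(\mathfrak{g},\mathfrak{a})$ with $\xi + \alpha \in \Sigma(\mathfrak{g},\mathfrak{a})$.

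\emph{Stage~2 ($\mathfrak{sl}_2$-theory).} Use Lemma~\ref{lem:sl_2_in_g} to complete $X_\xi$ to an $\mathfrak{sl}_2$-triple $(A_\xi, X_\xi, Y_\xi)$ in $\mathfrak{g}$, and view $\mathfrak{g}$ as a finite-dimensional $\mathfrak{sl}_2$-module on which $\ad A_\xi$ acts on each $\mathfrak{g}_\nu$ by the scalar $\nu(A_\xi)$, while $\ad X_\xi$ raises this weight by $2$. Let $q \geq 1$ be maximal with $\zeta := \alpha + q\xi \in \Sigma(\mathfrak{g},\mathfrak{a})$ (possible because $\alpha + \xi$ is a root); then $\zeta \in \Sigma^+(\mathfrak{g},\mathfrak{a})$, and any $0 \neq v \in \mathfrak{g}_\zeta$ satisfies $[X_\xi, v] \in \mathfrak{g}_{\zeta + \xi} = 0$, so $v$ is a highest-weight vector of $\mathfrak{sl}_2$-weight $n := \zeta(A_\xi) \geq 0$. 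If $n \geq 1$, take $\eta := \alpha + (q-1)\xi \in \Sigma^+(\mathfrak{g},\mathfrak{a})$ and $X_\eta := [Y_\xi, v]$, which is non-zero and satisfies $[X_\xi, X_\eta] = n v \neq 0$. If $n = 0$, then $\alpha(A_\xi) = \zeta(A_\xi) - 2q = -2q < 0$, so $\mathfrak{g}_\alpha$ sits in a negative weight space of $\ad A_\xi$, on which the raising operator $\ad X_\xi$ is injective; thus $[X_\xi, \mathfrak{g}_\alpha] \neq 0$ and we may take $\eta := \alpha$.

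I expect Stage~1 to be the only point requiring more than formal manipulation: that the highest restricted root dominates all positive roots is where the irreducibility of $\Sigma(\mathfrak{g},\mathfrak{a})$ --- equivalently, the simplicity of $\mathfrak{g}_{\mathbb{C}}$ --- is used, and this is exactly the content packaged in Lemma~\ref{lem:restricted_highest_root}; the root-chain induction that follows is short and remains valid for non-reduced $\Sigma(\mathfrak{g},\mathfrak{a})$. Stage~2 is routine $\mathfrak{sl}_2$-representation theory and, as arranged, is not disturbed by the possibility that restricted root spaces $\mathfrak{g}_\nu$ have dimension greater than one.
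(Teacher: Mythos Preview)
The paper does not actually prove Lemma~\ref{lem:Ker_of_n}: Section~\ref{subsection:properties_of_restricted_roots} explicitly lists it among ``well-known facts\ldots without proof,'' so there is no argument to compare against and your proof must stand on its own merits. It does; the argument is correct. Two small points of exposition: in Stage~1 your ``downward induction on the height of $\lambda-\xi$'' really keeps $\xi$ fixed and replaces $\lambda$ by the smaller root $\lambda-\beta$, so the induction is on $\mathrm{ht}(\mu-\xi)$ for a variable upper bound $\mu$ (the standard chain argument from $\xi$ up to $\lambda$)---say this explicitly; and what you attribute to Lemma~\ref{lem:restricted_highest_root} (that $\lambda-\xi$ is a non-negative integer combination of simple roots for every positive $\xi$) is not quite that lemma's content, though the fact you need is equally standard for irreducible root systems. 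Your remark that the ``in particular'' biconditional only makes literal sense for positive $\xi$ is well taken: for $\xi=-\lambda$ in a rank-one restricted system one has $\xi+\eta\notin\Sigma(\mathfrak{g},\mathfrak{a})$ for every $\eta\in\Sigma^+(\mathfrak{g},\mathfrak{a})$ while $\xi\neq\lambda$, so your restriction is the correct reading, and only the first assertion of the lemma is used downstream (in the proof of Lemma~\ref{lem:highest_in_closure}). Stage~2 is clean and handles the possibility $\dim\mathfrak{g}_\nu>1$ exactly as needed.
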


\section{Proofs of Theorem \ref{thm:O_min_g} and Theorem \ref{thm:real_minimal}}\label{section:proof_of_MainThm}

We consider the same setting in Section \ref{subsection:properties_of_restricted_roots},
and fix connected Lie groups $G_\mathbb{C}$ and $G$ with its Lie algebras 
$\mathfrak{g}_\mathbb{C}$ and $\mathfrak{g}$, respectively. 

In this section, we give proofs of 
Theorems \ref{thm:O_min_g} and \ref{thm:real_minimal}.
To this, we prove the next two lemmas: 

\begin{lem}\label{lem:highest_in_closure}
Let $\Orbit'_0$ be a non-zero real nilpotent orbit in $\mathfrak{g}$.
Then there exists a non-zero highest root vector $X_\lambda$ in $\mathfrak{g}_{\lambda}$ such that $X_\lambda$ is contained in the closure of $\Orbit'_0$.
\end{lem}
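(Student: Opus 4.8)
The plan is to use the restricted-root-space decomposition of $\mathfrak{g}$ with respect to $\mathfrak{a}$ together with the structure theory from Section \ref{subsection:properties_of_restricted_roots}. First I would take a non-zero nilpotent element $X_0 \in \Orbit'_0$ and, after conjugating by $G$, arrange it into a convenient position. Concretely, I would use the fact (a standard consequence of the Iwasawa/Jacobson--Morozov machinery) that every non-zero nilpotent orbit in $\mathfrak{g}$ meets the sum of positive restricted root spaces $\mathfrak{n} := \sum_{\eta \in \Sigma^+(\mathfrak{g},\mathfrak{a})} \mathfrak{g}_\eta$; in fact one can arrange $X_0 \in \mathfrak{n}$. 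Write $X_0 = \sum_{\eta} (X_0)_\eta$ with $(X_0)_\eta \in \mathfrak{g}_\eta$, and let $\eta_{\min}$ be a minimal element (with respect to the partial order on $\Sigma^+$) among those $\eta$ with $(X_0)_\eta \neq 0$.

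Next I would show that $\Orbit'_0$ contains a non-zero vector of a single root space $\mathfrak{g}_\xi$ for some $\xi \in \Sigma^+(\mathfrak{g},\mathfrak{a})$. The idea is to act by $\exp(\ad t H)$ for a suitable $H \in \mathfrak{a}$: choosing $H$ so that $\eta_{\min}(H) < \eta(H)$ for all other $\eta$ appearing in $X_0$, the rescaled limit $\lim_{t \to \infty} e^{\eta_{\min}(H)t}\, e^{-t\,\ad H} X_0 = (X_0)_{\eta_{\min}}$ lies in $\overline{\Orbit'_0}$ (each orbit closure is $A$-stable and a cone, being $\R_{>0}$-stable by the $\mathfrak{sl}_2$-scaling). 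So $\overline{\Orbit'_0}$ contains a non-zero element $X_\xi \in \mathfrak{g}_\xi$ with $\xi := \eta_{\min}$.

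Then I would push $\xi$ up to the highest root $\lambda$ by a monotone-increasing induction on the height of $\xi$. If $\xi \neq \lambda$, Lemma \ref{lem:Ker_of_n} provides $\eta \in \Sigma^+(\mathfrak{g},\mathfrak{a})$ and $X_\eta \in \mathfrak{g}_\eta$ with $[X_\xi, X_\eta] \neq 0$, hence $0 \neq [X_\xi, X_\eta] \in \mathfrak{g}_{\xi+\eta}$. Now I would apply $\exp(s\, \ad X_\eta)$ to $X_\xi$: this gives $X_\xi + s[X_\eta, X_\xi] + \tfrac{s^2}{2}[X_\eta,[X_\eta,X_\xi]] + \cdots$, an element of $\overline{\Orbit'_0}$ lying in $\bigoplus_{j \ge 0} \mathfrak{g}_{\xi + j\eta}$. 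Rescaling by $\ad H'$ for $H' \in \mathfrak{a}$ with $(\xi+\eta)(H')$ strictly minimal among the heights $\xi, \xi+\eta, \xi+2\eta,\dots$ occurring with nonzero component, and taking the limit as before, I extract a non-zero element of $\mathfrak{g}_{\xi+\eta}$ inside $\overline{\Orbit'_0}$. Since $\xi + \eta$ has strictly larger height than $\xi$ and is still a positive root, the induction terminates at the unique dominant longest root, which is $\lambda$ by Lemma \ref{lem:restricted_highest_root}; this yields the desired non-zero $X_\lambda \in \mathfrak{g}_\lambda \cap \overline{\Orbit'_0}$.

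The main obstacle is the very first reduction --- getting $X_0$ into $\mathfrak{n}$ --- and, more subtly, making the limiting arguments rigorous: one must know that $\overline{\Orbit'_0}$ is stable under $\exp(\ad \mathfrak{a})$ and is a cone under positive scalars, so that the various limits $\lim_{t\to\infty}$ of rescaled $A$-translates genuinely land in $\overline{\Orbit'_0}$ rather than merely in its closure in some larger space. The conical property follows because $\Orbit'_0$ is a nilpotent orbit (scaling $X$ by $t^2$ is realized by $\Ad(\exp(\log t \cdot H))$ for the $\mathfrak{sl}_2$-neutral element $H$), and $A$-stability is immediate since $A \subset G$. Once these two points are in place, the height induction is routine and the nonvanishing at each step is exactly what Lemma \ref{lem:Ker_of_n} guarantees.
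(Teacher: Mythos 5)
Your overall strategy --- restricted-root decomposition, conical scaling limits to isolate a single root-space component, and Lemma \ref{lem:Ker_of_n} to climb up to $\lambda$ --- is the same as the paper's, and the supporting points you flag (orbit closures are stable under positive scalars and under $A$, and the reduction into $\mathfrak{n}$) are fine. But your induction step has a genuine gap. After applying $\exp(s\,\ad X_\eta)$ to $X_\xi$ you obtain an element supported on the string $\xi,\ \xi+\eta,\ \xi+2\eta,\dots$, and you claim to extract its $\mathfrak{g}_{\xi+\eta}$-component by a scaling limit. That requires some $H'\in\mathfrak{a}$ with $(\xi+\eta)(H')$ strictly smaller than both $\xi(H')$ and $(\xi+2\eta)(H')$; since $\xi+\eta$ is the midpoint of $\xi$ and $\xi+2\eta$, no such $H'$ exists whenever $(\ad X_\eta)^2X_\xi\neq 0$. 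The one-parameter limit trick can only isolate a component whose root is \emph{not} in the convex hull of the other roots in the support, never an interior point of a string. The same issue is latent in your first step: a root that is merely minimal for the partial order on $\Sigma^+$ need not be separable from the rest of the support by a linear functional, so the $H$ you posit need not exist; you should take the component that is extremal for a linear (e.g.\ lexicographic) order instead.

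Both defects are repairable. In the induction step, extract the \emph{top} component $\xi+j_{\max}\eta$ of the string (still a positive root strictly above $\xi$, so the height induction terminates, and by Lemma \ref{lem:Ker_of_n} it can only terminate at $\lambda$). The paper sidesteps the problem more cleanly: it fixes a lexicographic ordering and extracts from each $X'\in\overline{\Orbit'_0}$ its \emph{highest} component $X'_{\lambda'}$ (automatically separable; the same limit also kills the $\mathfrak{m}\oplus\mathfrak{a}$ parts since $\lambda'(A')>0$, so no prior conjugation into $\mathfrak{n}$ is needed). It then lets $\lambda_0$ be the maximal root occurring in the support of \emph{any} element of $\overline{\Orbit'_0}$ and derives a contradiction if $\lambda_0\neq\lambda$: applying $\exp(\ad X_\eta)$ makes $\lambda_0+\eta$ occur in some support, contradicting maximality --- so no second extraction is ever required.
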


\begin{lem}\label{lem:G_C-conj_of_highest_root_vectors}
For any two non-zero highest root vectors $X_\lambda$, $X_\lambda'$ in $\mathfrak{g}_\lambda$,
there exists $g \in G_\mathbb{C}$ such that $g X_\lambda =X_\lambda'$.
\end{lem}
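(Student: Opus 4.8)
The plan is to show that all non-zero vectors in $\mathfrak{g}_\lambda$ lie in a single $G_\mathbb{C}$-orbit by identifying that orbit as a fixed complex nilpotent orbit. First I would invoke Lemma \ref{lem:sl_2_in_g}: for any non-zero $X_\lambda \in \mathfrak{g}_\lambda$ there is $Y_\lambda \in \mathfrak{g}_{-\lambda}$ with $(A_\lambda, X_\lambda, Y_\lambda)$ an $\mathfrak{sl}_2$-triple in $\mathfrak{g}$. In particular each such $X_\lambda$ is nilpotent in $\mathfrak{g}$, hence in $\mathfrak{g}_\mathbb{C}$, and the $\mathfrak{sl}_2$-triples attached to different choices of $X_\lambda$ all share the \emph{same} neutral element $A_\lambda$. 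Since $A_\lambda$ is hyperbolic and by the Jacobson--Morozov / Kostant--Malcev uniqueness (recalled in Section \ref{subsection:WDD_of_nilp}) the complex nilpotent $G_\mathbb{C}$-orbit through $X_\lambda$ is determined by the $G_\mathbb{C}$-conjugacy class of the neutral element, every non-zero $X_\lambda \in \mathfrak{g}_\lambda$ lies in one and the same complex nilpotent orbit — call it $\Orbit$.

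Next I would upgrade ``same orbit'' to ``same orbit \emph{transitively on} $\mathfrak{g}_\lambda \setminus \{0\}$'' using the grading supplied by $A_\lambda$. Because $\lambda$ is the highest restricted root (Lemma \ref{lem:restricted_highest_root}), Lemma \ref{lem:Ker_of_n} shows $\lambda + \eta$ is never a restricted root for $\eta \in \Sigma^+(\mathfrak{g},\mathfrak{a})$; combined with the fact that $2\lambda$ is never a root, this gives $\lambda(A_\mu) \le 2$ for all restricted roots $\mu$ appearing, so the $\ad A_\lambda$-eigenvalue $2$ eigenspace $\mathfrak{g}_\mathbb{C}(A_\lambda; 2)$ is exactly $(\mathfrak{g}_\lambda)_\mathbb{C}$ (the complexification of the real root space $\mathfrak{g}_\lambda$; note no other restricted root $\mu$ has $\mu(A_\lambda) = 2$, precisely because $\langle \mu, \lambda\rangle / \langle \lambda,\lambda\rangle = 1$ forces $\mu = \lambda$ for the \emph{longest} root $\lambda$). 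Now let $L_\mathbb{C} = Z_{G_\mathbb{C}}(A_\lambda)$ be the Levi with Lie algebra $\mathfrak{g}_\mathbb{C}(A_\lambda;0)$. A standard result of Kostant on $\mathfrak{sl}_2$-triples (the same one underlying the injectivity statement in Section \ref{subsection:WDD_of_nilp}) says that, within the eigenspace $\mathfrak{g}_\mathbb{C}(A_\lambda;2)$, the set of nilpositive parts of $\mathfrak{sl}_2$-triples with neutral element $A_\lambda$ is a single $L_\mathbb{C}$-orbit which is moreover \emph{dense} in $\mathfrak{g}_\mathbb{C}(A_\lambda;2)$; but here every non-zero element of $\mathfrak{g}_\mathbb{C}(A_\lambda;2) = (\mathfrak{g}_\lambda)_\mathbb{C}$ is itself such a nilpositive part (by Lemma \ref{lem:sl_2_in_g} applied over $\mathbb{C}$, or equivalently because $\mathfrak{g}_\mathbb{C}(A_\lambda;2)$ is spanned by root vectors that are all conjugate), so $L_\mathbb{C}$ — and a fortiori $G_\mathbb{C}$ — acts transitively on $\mathfrak{g}_\lambda \setminus \{0\}$, which is the assertion.

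The main obstacle I anticipate is making the transitivity argument airtight: knowing two vectors lie in the same $G_\mathbb{C}$-orbit does not by itself produce a conjugating element preserving $\mathfrak{g}_\lambda$, so the real content is the Kostant-type statement that $L_\mathbb{C}$ acts transitively on the non-zero vectors of the $+2$-eigenspace, and one must verify that $\mathfrak{g}_\mathbb{C}(A_\lambda;2)$ really equals $(\mathfrak{g}_\lambda)_\mathbb{C}$ with no extra restricted roots contributing — this is exactly where the ``$\lambda$ is the longest dominant root'' property (Lemma \ref{lem:restricted_highest_root}) and the non-existence of $\lambda + \eta$ (Lemma \ref{lem:Ker_of_n}) are used. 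An alternative, perhaps cleaner route avoiding the eigenspace bookkeeping: pick a particular non-zero $X_\lambda^0 \in \mathfrak{g}_\lambda$, and for an arbitrary $X_\lambda = \sum_j c_j e_j$ (in a root-vector basis $e_j$ of $\mathfrak{g}_\lambda$) exhibit an explicit element of $Z_{G_\mathbb{C}}(A_\lambda)$ carrying $X_\lambda^0$ to $X_\lambda$ — but the Kostant density argument is the conceptually correct packaging and is what I would write.
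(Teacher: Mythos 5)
Your first paragraph is already a complete proof, and it is exactly the paper's argument: by Lemma \ref{lem:sl_2_in_g}, both $X_\lambda$ and $X_\lambda'$ are nilpositive elements of $\mathfrak{sl}_2$-triples with the \emph{same} neutral element $A_\lambda$, and Malcev's uniqueness theorem (two $\mathfrak{sl}_2$-triples in $\mathfrak{g}_\mathbb{C}$ whose neutral elements are conjugate are themselves $G_\mathbb{C}$-conjugate; equivalently, the injectivity of $\Orbit^{G_\mathbb{C}} \mapsto H_{\Orbit}$ recalled in Section \ref{subsection:WDD_of_nilp}) then produces $g \in G_\mathbb{C}$ with $g X_\lambda = X_\lambda'$. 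The ``main obstacle'' you anticipate is not one: the lemma asks only for \emph{some} $g \in G_\mathbb{C}$, with no requirement that $g$ preserve $\mathfrak{g}_\lambda$, and ``$X_\lambda$ and $X_\lambda'$ lie in the same $G_\mathbb{C}$-orbit'' is by definition equivalent to the conclusion. Your second paragraph therefore proves a strictly stronger statement --- that the Levi $Z_{G_\mathbb{C}}(A_\lambda)$ already acts transitively on the nonzero vectors of the $+2$-eigenspace of $\ad A_\lambda$, which you correctly identify with $(\mathfrak{g}_\lambda)_\mathbb{C}$ using that $\lambda$ is the longest restricted root. That refinement is correct (and is the kind of argument the paper deploys later, in Section \ref{section:real-orbits_in_min}, when it needs conjugating elements inside $MA$ rather than in all of $G$), but it is not needed here and can be dropped.
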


Theorems \ref{thm:O_min_g} and \ref{thm:real_minimal} follows from 
Lemmas \ref{lem:highest_in_closure} and \ref{lem:G_C-conj_of_highest_root_vectors} immediately.

We also remark that 
Lemma \ref{lem:highest_in_closure} implies the next proposition, 
which will be used in Section \ref{section:real-orbits_in_min} to prove Theorem \ref{thm:number_of_G-orbits_in_minimal}.

\begin{prop}\label{prop:G-orbits_meets_highest}
Any $G$-orbit in $\ming \cap \mathfrak{g}$ meets $\mathfrak{g}_\lambda \setminus \{0\}$.
\end{prop}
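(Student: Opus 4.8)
The plan is to combine Lemma \ref{lem:highest_in_closure} with the characterization of $\ming$ from Theorem \ref{thm:O_min_g}. Let $\Orbit^G$ be a $G$-orbit contained in $\ming \cap \mathfrak{g}$. First I would observe that $\Orbit^G$ is non-zero, since $\ming$ is a non-zero orbit. Applying Lemma \ref{lem:highest_in_closure} to $\Orbit'_0 := \Orbit^G$, there is a non-zero highest root vector $X_\lambda \in \mathfrak{g}_\lambda$ lying in the closure $\overline{\Orbit^G}$ (closure taken in $\mathfrak{g}$, equivalently in $\mathfrak{g}_\C$). Since $X_\lambda$ is a non-zero element of $\mathfrak{g}_\lambda$, Lemma \ref{lem:sl_2_in_g} (with $\xi = \lambda$) produces an $\mathfrak{sl}_2$-triple $(A_\lambda, X_\lambda, Y_\lambda)$ in $\mathfrak{g}$; in particular $X_\lambda$ is a non-zero nilpotent element, so $G_\C \cdot X_\lambda$ is a non-zero complex nilpotent orbit meeting $\mathfrak{g}$, and by condition (iii) of Theorem \ref{thm:O_min_g} we have $G_\C \cdot X_\lambda = \ming$. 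Hence $X_\lambda \in \ming \cap \mathfrak{g}$.

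Next I would argue that $X_\lambda$ in fact lies in $\Orbit^G$ itself, not merely in its closure. The key point is that $\overline{\Orbit^G} \cap \ming$ can only be $\Orbit^G$ together with orbits strictly below it in the closure ordering; but any non-zero nilpotent $G$-orbit strictly below $\Orbit^G$ would have strictly smaller dimension, while $X_\lambda \in \ming$ forces its $G$-orbit to be a real form of $\ming$, whose real dimension equals $\dim_\C \ming = \dim_\mathbb{R} \Orbit^G$ (the last being true because $\Orbit^G \subset \ming$ is itself a real form of $\ming$, by Theorem \ref{thm:real_minimal}, equivalence of (iii) and (iv)). A non-zero nilpotent orbit cannot be a proper subset of the closure of another orbit of the same dimension, so $G \cdot X_\lambda = \Orbit^G$, i.e. $X_\lambda \in \Orbit^G$. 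Therefore $\Orbit^G \cap (\mathfrak{g}_\lambda \setminus \{0\}) \ni X_\lambda$ is non-empty, which is the claim.

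I expect the main obstacle to be the bookkeeping in the second paragraph: promoting "$X_\lambda$ in the closure" to "$X_\lambda$ in the orbit." The cleanest route is the dimension argument above, using that all minimal real nilpotent orbits have the same dimension $\dim_\C \ming$ (Theorem \ref{thm:real_minimal}), so that the only orbit of that dimension inside $\overline{\Orbit^G}$ is $\Orbit^G$ itself; since $X_\lambda$ generates such an orbit, it must already be in $\Orbit^G$. One should be slightly careful that this reasoning does not secretly use the conclusion of Theorem \ref{thm:number_of_G-orbits_in_minimal} (which is proved later using this very proposition); it does not — it only uses Theorems \ref{thm:O_min_g} and \ref{thm:real_minimal}, both established before Section \ref{section:real-orbits_in_min}, and Lemmas \ref{lem:highest_in_closure} and \ref{lem:sl_2_in_g}.
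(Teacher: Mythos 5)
Your proof is correct and follows the paper's (largely implicit) argument: the paper simply remarks that Lemma \ref{lem:highest_in_closure} implies Proposition \ref{prop:G-orbits_meets_highest}, and your write-up supplies exactly the missing step of promoting ``$X_\lambda \in \overline{\Orbit^G}$'' to ``$X_\lambda \in \Orbit^G$'', which your dimension count handles correctly. A marginally shorter route for that last step: since $\Orbit^G \subset \ming$, Theorem \ref{thm:real_minimal} makes $\Orbit^G$ minimal in $(\mathcal{N}(\mathfrak{g})/G)\setminus\{[0]\}$, so $\overline{\Orbit^G} = \Orbit^G \cup \{0\}$ and the non-zero element $X_\lambda$ must lie in $\Orbit^G$.
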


Let us give proofs of Lemmas \ref{lem:highest_in_closure} and \ref{lem:G_C-conj_of_highest_root_vectors} as follows.

\begin{proof}[Proof of Lemma $\ref{lem:highest_in_closure}$]
There is no loss of generality in assuming that the ordering on $\mathfrak{a}$ is lexicographic.
Let us put $\mathfrak{m} = Z_{\mathfrak{k}}(\mathfrak{a})$.
Then $\mathfrak{g}$ can be decomposed as
\[
\mathfrak{g} = \mathfrak{m} \oplus \mathfrak{a} \oplus \bigoplus_{\xi \in \Sigma(\mathfrak{g},\mathfrak{a})} \mathfrak{g}_{\xi}.
\]
For each $X' \in \mathfrak{g}$, we denote by
\[
X' = X'_\mathfrak{m} + X'_\mathfrak{a} + \sum_{\xi \in \Sigma(\mathfrak{g},\mathfrak{a})} X'_\xi \quad (X'_\mathfrak{m} \in \mathfrak{m},\ X'_\mathfrak{a} \in \mathfrak{a},\ X'_\xi \in \mathfrak{g}_\xi).
\]
For a fixed $X' \in \overline{\Orbit'_0}$, we denote by $\lambda'$ the highest root of 
\[
\Sigma_{X'} := \{\, \xi \in \Sigma(\mathfrak{g},\mathfrak{a}) \mid X'_\xi \neq 0 \,\}
\]
with respect to the ordering on $\mathfrak{a}$.
Here we remark that if $X' \neq 0$, then 
the set $\Sigma_{X'}$ is not empty
since $X'$ is nilpotent element in $\mathfrak{g}$.
As a first step of the proof, we shall prove that for any $X' \in \overline{\Orbit'_0}$, the root vector $X'_{\lambda'}$ is also in $\overline{\Orbit'_0}$.
Let us take $A' \in \mathfrak{a}$ satisfying that 
\begin{align*}
0 < \lambda'(A') \text{ and }
\xi(A') < \lambda'(A') \quad \text{for any } \xi \in \Sigma_{X'} \setminus \{\lambda'\}.
\end{align*}
Note that such $A'$ exists 
since $\lambda'$ is the highest root of $\Sigma_{X'}$ 
with respect to the lexicographic ordering on $\mathfrak{a}$.
Let us put 
\[
X'_k := \frac{1}{e^{k\lambda'(A')}} \exp(\ad_\mathfrak{g}k A') X' \quad \text{for each } k \in \mathbb{N}.
\]
Then $X'_k$ is in $\overline{\Orbit'_0}$ for each $k$ since $\overline{\Orbit'_0}$ is stable by positive scalars.
Furthermore, since
\begin{align*}
\lim_{k \rightarrow \infty} X'_k 
	&= \lim_{k \rightarrow \infty} \sum_{\xi \in \Sigma_{X'}} e^{k(\xi(A')-\lambda'(A'))} X'_\xi = X'_{\lambda'},
\end{align*}
we obtain that $X'_{\lambda'}$ is in $\overline{O'_0}$.
To complete the proof, 
we only need to show that there exists $X' \in \overline{O'_0}$ such that 
$\lambda' = \lambda$, where $\lambda'$ is the highest root of $\Sigma_{X'}$.
Let us put 
\begin{align*}
\Sigma_{\overline{O}'_0} 
&:= \{\, \xi \in \Sigma(\mathfrak{g},\mathfrak{a}) \mid \ \text{there exists } X' \in \overline{\Orbit'_0} \text{ such that } X'_\xi \neq 0 \,\} \\
&= \bigcup_{X' \in \overline{\Orbit'_0}} \Sigma_{X'}.
\end{align*}
We denote by $\lambda_0$ the highest root of $\Sigma_{\overline{O}'_0}$.
Then we can find a root vector $X'_{\lambda_0}$ in $\mathfrak{g}_{\lambda_0} \cap \overline{\Orbit'_0}$ by using the fact proved above.
We assume that $\lambda_0 \neq \lambda$.
Then by Lemma \ref{lem:Ker_of_n}, we can find $\eta \in \Sigma^+(\mathfrak{g},\mathfrak{a})$ and 
$X_\eta \in \mathfrak{g}_\eta$ such that $[X'_{\lambda_0},X_\eta] \neq 0$.
Thus we have 
\[
\lambda_0 + \eta \in \Sigma_{X''} \subset \Sigma_{\overline{\Orbit'_0}}.
\]
where $X'' := \exp(\ad_{\mathfrak{g}} X_\eta) X'_{\lambda_0} \in \overline{\Orbit'_0}$.
This contradicts the definition of $\lambda_0$.
Thus $\lambda_0 = \lambda$.
\end{proof}

\begin{proof}[Proof of Lemma $\ref{lem:G_C-conj_of_highest_root_vectors}$]
Fix non-zero highest root vectors $X_\lambda$ and $X'_\lambda$.
Let $A_{\lambda}$ the coroot of $\lambda$ in $\mathfrak{a}$.
Then by Lemma \ref{lem:sl_2_in_g}, 
we can find $Y_{\lambda}$ and $Y_{\lambda}'$ in $\mathfrak{g}_\mathbb{C}$ such that 
$(A_{\lambda},X_{\lambda},Y_{\lambda})$
and
$(A_{\lambda},X'_{\lambda},Y'_{\lambda})$ 
are $\mathfrak{sl}_2$-triples in $\mathfrak{g}_\mathbb{C}$, respectively.
Thus by applying Malcev's theorem in \cite{Malcev50} 
there exists $g \in G_\mathbb{C}$ such that $g X_\lambda = X'_\lambda$.
\end{proof}

\section{Complex nilpotent orbits and real forms}\label{section:nilp_and_real}

Let $\mathfrak{g}_\mathbb{C}$ be a complex simple Lie algebra and $\mathfrak{g}$ a non-compact real form of $\mathfrak{g}_\mathbb{C}$. 
In this section, 
we will give a necessary and sufficient condition of $\mathfrak{g}$ for 
$\minC = \ming$ including the first claim of Theorem \ref{thm:WDD_of_our_orbits}.

We fix $G$, $G_\mathbb{C}$ for the connected Lie group with its Lie algebra $\mathfrak{g}$, $\mathfrak{g}_\mathbb{C}$, respectively.
Let $\mathfrak{g} = \mathfrak{k} + \mathfrak{p}$ be a Cartan decomposition of $\mathfrak{g}$. 
We fix a maximal abelian subspace $\mathfrak{a}$ of $\mathfrak{p}$ and its ordering.
Let $\lambda$ be the highest root of the restricted root system $\Sigma(\mathfrak{g},\mathfrak{a})$ for $(\mathfrak{g},\mathfrak{a})$ with respect to the ordering on $\mathfrak{a}$.
Then by Theorem \ref{thm:O_min_g}, which was already proved in Section \ref{section:proof_of_MainThm}, the complex nilpotent orbit \[
\ming = G_\mathbb{C} \cdot (\mathfrak{g}_\lambda \setminus \{0\})
\]
is the minimum in $(\mathcal{N}_{\mathfrak{g}}/G_\mathbb{C}) \setminus \{[0]\}$.

We extend $\mathfrak{a}$ and its ordering to a Cartan subalgebra $\mathfrak{h}_\mathbb{C}$ of $\mathfrak{g}_\mathbb{C}$ and an ordering on it.
Let $\phi$ be the highest root of the root system $\Delta(\mathfrak{g}_\mathbb{C},\mathfrak{h}_\mathbb{C})$ for $(\mathfrak{g}_\mathbb{C},\mathfrak{h}_\mathbb{C})$ with respect to the ordering on $\mathfrak{h}_\mathbb{C}$.
We recall that the complex nilpotent orbit \[
\minC = G_\mathbb{C} \cdot ((\mathfrak{g}_\mathbb{C})_\phi \setminus \{0\})
\]
is the minimum in $(\mathcal{N}/G_\mathbb{C}) \setminus \{[0]\}$.

Then the next proposition, 
including the first claim of Theorem \ref{thm:WDD_of_our_orbits}, 
holds:

\begin{prop}\label{prop:min_min}
The following conditions on a non-compact real simple Lie algebra $\mathfrak{g}$ without complex structure are equivalent$:$
\begin{enumerate}
\item \label{item:min_min:min_min} $\minC \neq \ming$.
\item \label{item:min_min:min_meets_g} $\minC \cap \mathfrak{g} = \emptyset$. 
\item \label{item:min_min:min_meets_p} $\minC \cap \mathfrak{p}_\mathbb{C} = \emptyset$ 
where $\mathfrak{g}_\mathbb{C} = \mathfrak{k}_\mathbb{C} + \mathfrak{p}_\mathbb{C}$ is the complexification of a Cartan decomposition of $\mathfrak{g}$.
\item \label{item:min_min:one_dim_highest} $\dim_\mathbb{R} \mathfrak{g}_\lambda \geq 2$. 
\item \label{item:min_min:real} The highest root $\phi$ of $\Delta(\mathfrak{g}_\mathbb{C},\mathfrak{h}_\mathbb{C})$ defined above is not a real root.
\item \label{item:min_min:WDD_match} The weighted Dynkin diagram of $\minC$ does not match the Satake diagram of $\mathfrak{g}$ $($see Section $\ref{subsection:Satake_diagrams}$ for the notation$)$.
\item \label{item:min_min:WDD_extended} 
There exists a node $\alpha$ of Dynkin diagram of $\mathfrak{g}_\mathbb{C}$ such that $\alpha$ is black in the Satake diagram of $\mathfrak{g}$ and has some edges connected to the added node in the extended Dynkin diagram of $\mathfrak{g}_\mathbb{C}$.
\item \label{item:min_min:extra:discrete} There exists an infinite-dimensional 
$($non-holomorphic$)$ irreducible $(\mathfrak{g}_\C,G_U)$-module $X$ 
such that $X$ is discretely decomposable as a $(\mathfrak{g},K)$-module,
where $G_U$ is a connected compact real form of $G_\C$
$($See \cite[Section 1.2]{Kobayashi97discreteIII} for the definition of 
the discrete decomposability$)$.
\item \label{item:min_min:extra:discreteK} There exists an infinite-dimensional 
$($non-holomorphic$)$ irreducible $(\mathfrak{g}_\C,G_U)$-module $X$ 
such that $X$ is discretely decomposable as a $(\mathfrak{k}_\C,K)$-module,
where $G_U$ is a connected compact real form of $G_\C$.
\item \label{item:min_min:extra:nilp_projection}
$\mathrm{pr}_{\mathfrak{p}_\C}(\Orbit_{\min}^{G_\mathbb{C}})$ is contained in the nilpotent cone $\mathcal{N}(\mathfrak{p}_\C) := \mathcal{N} \cap \mathfrak{p}_\C$ in $\mathfrak{p}_\C$,
where $\mathrm{pr}_{\mathfrak{p}_\C} : \mathfrak{g}_\mathbb{C} \rightarrow \mathfrak{p}_\mathbb{C}$ denotes the projection with respect to the decomposition $\mathfrak{g}_\mathbb{C} = \mathfrak{k}_\mathbb{C} + \mathfrak{p}_\mathbb{C}$. 
\item \label{item:min_min:extra:nilp_projectionK}
$\mathrm{pr}_{\mathfrak{k}_\C}(\Orbit_{\min}^{G_\mathbb{C}})$ is contained in the nilpotent cone $\mathcal{N} \cap \mathfrak{k}_\C$ in $\mathfrak{k}_\C$,
where $\mathrm{pr}_{\mathfrak{k}_\C} : \mathfrak{g}_\mathbb{C} \rightarrow \mathfrak{k}_\mathbb{C}$ denotes the projection with respect to the decomposition $\mathfrak{g}_\mathbb{C} = \mathfrak{k}_\mathbb{C} + \mathfrak{p}_\mathbb{C}$. 
\item \label{item:min_min:classification} 
$\mathfrak{g}$ is isomorphic to one of the following simple Lie algebras 
\begin{align*}
\mathfrak{su}^*(2k),\ \mathfrak{so}(n,1),\ \mathfrak{sp}(p,q),\ \mathfrak{e}_{6(-26)} \text{ and } \mathfrak{f}_{4(-20)},
\end{align*}
for $k \geq 2$, $n \geq 5$ and $p,q \geq 1$.
\end{enumerate}
\end{prop}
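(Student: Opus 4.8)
\textbf{Proof proposal for Proposition~\ref{prop:min_min}.}
The plan is to prove the equivalences in two groups. The ``core'' equivalences
\eqref{item:min_min:min_min} $\Leftrightarrow$ \eqref{item:min_min:min_meets_g} $\Leftrightarrow$ \eqref{item:min_min:min_meets_p} $\Leftrightarrow$ \eqref{item:min_min:one_dim_highest} $\Leftrightarrow$ \eqref{item:min_min:real} $\Leftrightarrow$ \eqref{item:min_min:WDD_match} $\Leftrightarrow$ \eqref{item:min_min:WDD_extended}
are essentially structural and follow from the machinery already assembled in Sections~\ref{section:proof_of_MainThm} and~\ref{subsection:WDD_of_nilp}. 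First I would observe that \eqref{item:min_min:min_meets_g} $\Leftrightarrow$ \eqref{item:min_min:min_meets_p} is immediate from Sekiguchi's result cited in the introduction (a complex nilpotent orbit meets $\mathfrak{g}$ iff it meets $\mathfrak{p}_\C$). The implication \eqref{item:min_min:min_meets_g} $\Rightarrow$ \eqref{item:min_min:min_min} is trivial since $\ming$ by definition meets $\mathfrak{g}$; the converse needs the minimality: if $\minC$ met $\mathfrak{g}$ then $\minC \in (\mathcal{N}_\mathfrak{g}/G_\C)\setminus\{[0]\}$, and being the minimum of the larger poset it would also be the minimum of the smaller one, forcing $\ming = \minC$. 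For \eqref{item:min_min:min_min} $\Leftrightarrow$ \eqref{item:min_min:one_dim_highest}: by Theorem~\ref{thm:O_min_g}, $\ming = G_\C\cdot(\mathfrak{g}_\lambda\setminus\{0\})$, and $\minC$ is the orbit of a highest root vector. If $\dim_\R\mathfrak{g}_\lambda = 1$, pick a nonzero $X_\lambda\in\mathfrak{g}_\lambda$ and complete it via Lemma~\ref{lem:sl_2_in_g} to an $\mathfrak{sl}_2$-triple in $\mathfrak{g}$; the weighted Dynkin diagram computation (or a direct comparison of the coroot $A_\lambda$ with $H_\phi$) shows this forces $\ming = \minC$. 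Conversely if $\dim_\R\mathfrak{g}_\lambda\ge 2$ then $\mathfrak{g}_\lambda\setminus\{0\}$ contains an element whose semisimple part under a suitable $\mathbb{R}^\times$-action is nonzero — more carefully, one shows $\ming\cap\mathfrak{g}$ does not reduce to (the real points of) the minimal orbit because $\dim_\C\ming > \dim_\C\minC$; I expect the cleanest argument is via weighted Dynkin diagrams, identifying the diagram of $\ming$ from the restricted-root data and seeing it differs from \eqref{def:smallest_and_extended} precisely when $\dim_\R\mathfrak{g}_\lambda\ge 2$.

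Next I would handle \eqref{item:min_min:real} and the Satake-diagram conditions \eqref{item:min_min:WDD_match}, \eqref{item:min_min:WDD_extended}. Recall a root of $\Delta(\mathfrak{g}_\C,\mathfrak{h}_\C)$ is \emph{real} if it is fixed by $-\theta$ on $\mathfrak{h}^*$ (equivalently its restriction to $\mathfrak{a}$ has the same length, i.e.\ the root space is one-dimensional over $\mathfrak{a}$), and $\dim_\R\mathfrak{g}_\lambda$ counts the $\mathfrak{h}_\C$-roots restricting to $\lambda$ together with the $\mathfrak{m}$-contribution. With $\mathfrak{h}_\C$ chosen so that $\mathfrak{a}\subset\mathfrak{h}$ is the maximal split part and the ordering on $\mathfrak{h}_\C$ extends that on $\mathfrak{a}$, the highest root $\phi$ of $\Delta^+$ restricts to the highest restricted root $\lambda$; then $\phi$ is real iff the $\phi$-root space is the whole of $\mathfrak{g}_\lambda\otimes\C$ up to the compact part, which by a standard count gives \eqref{item:min_min:real} $\Leftrightarrow$ $\dim_\R\mathfrak{g}_\lambda = 1$, i.e.\ the negation of \eqref{item:min_min:one_dim_highest}. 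For \eqref{item:min_min:WDD_match}: the weighted Dynkin diagram of $\minC$ is $\alpha\mapsto 2\langle\alpha,\phi\rangle/\langle\phi,\phi\rangle$ by \eqref{def:smallest_and_extended}, and ``matching the Satake diagram'' means (by the criterion for a hyperbolic/$\mathfrak{sl}_2$ element to be realizable in $\mathfrak{g}$) that this functional is $\theta$-compatible; unwinding, $\minC\cap\mathfrak{g}\ne\emptyset$ iff the diagram matches, giving \eqref{item:min_min:min_meets_g} $\Leftrightarrow$ \eqref{item:min_min:WDD_match}. Finally \eqref{item:min_min:WDD_match} $\Leftrightarrow$ \eqref{item:min_min:WDD_extended} is the translation, already recorded at the end of Section~\ref{subsection:WDD_of_nilp}, that the weight on $\alpha$ is $1$ iff $\alpha$ is joined to $-\phi$ in the extended Dynkin diagram: so the diagram fails to match the Satake diagram precisely when some node joined to the affine node carries weight $1$ but is colored black — the negation of the ``$0$ on all black nodes, etc.'' matching condition — which is exactly \eqref{item:min_min:WDD_extended}.

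The last block is the representation-theoretic characterizations \eqref{item:min_min:extra:discrete}, \eqref{item:min_min:extra:discreteK}, \eqref{item:min_min:extra:nilp_projection}, \eqref{item:min_min:extra:nilp_projectionK} and the explicit classification \eqref{item:min_min:classification}. For the projection statements: $\mathrm{pr}_{\mathfrak{p}_\C}(\overline{\minC})$ is the associated variety of the $(\mathfrak{g},K)$-module obtained by restricting a ``minimal'' $(\mathfrak{g}_\C,G_U)$-module, and it is contained in the nilpotent cone $\mathcal{N}(\mathfrak{p}_\C)$ iff that restriction is discretely decomposable — this is the asymptotic-cone criterion of Kobayashi--Vogan-type, so \eqref{item:min_min:extra:discrete} $\Leftrightarrow$ \eqref{item:min_min:extra:nilp_projection} and symmetrically \eqref{item:min_min:extra:discreteK} $\Leftrightarrow$ \eqref{item:min_min:extra:nilp_projectionK}; and $\mathrm{pr}_{\mathfrak{p}_\C}(\minC)\subset\mathcal{N}(\mathfrak{p}_\C)$ is equivalent to $\minC\cap\mathfrak{p}_\C=\emptyset$ once one checks that the closed $G_U$-orbit $\minC$ projects into the nilpotent cone exactly when it avoids $\mathfrak{p}_\C$ (a moment-map / Cartan-involution argument), linking these to \eqref{item:min_min:min_meets_p}; I would point to \cite{Kobayashi97discreteIII} and the circle of ideas there for the precise equivalences. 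The classification \eqref{item:min_min:classification} is then obtained by going through the Satake diagrams case by case (Table~\ref{table:Satake_diagrams}) and applying criterion \eqref{item:min_min:WDD_extended}: one checks, for each non-compact real simple $\mathfrak{g}$ without complex structure, whether some black node is adjacent to the affine node in the extended Dynkin diagram, and the affirmative cases are exactly $\mathfrak{su}^*(2k)$, $\mathfrak{so}(n,1)$, $\mathfrak{sp}(p,q)$, $\mathfrak{e}_{6(-26)}$, $\mathfrak{f}_{4(-20)}$. The main obstacle I anticipate is not any single implication but assembling the representation-theoretic equivalences \eqref{item:min_min:extra:discrete}--\eqref{item:min_min:extra:nilp_projectionK} rigorously: they require the precise statement relating discrete decomposability of a restriction to the associated/asymptotic variety being contained in the appropriate nilpotent cone, and care is needed that ``minimal $(\mathfrak{g}_\C,G_U)$-module'' is interpreted so that its associated variety is exactly $\overline{\minC}$; the combinatorial classification step, by contrast, is lengthy but routine.
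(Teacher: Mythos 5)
Your overall architecture coincides with the paper's: Sekiguchi's result for \eqref{item:min_min:min_meets_g} $\Leftrightarrow$ \eqref{item:min_min:min_meets_p}, minimality of $\minC$ in the larger poset for \eqref{item:min_min:min_min} $\Leftrightarrow$ \eqref{item:min_min:min_meets_g}, the match criterion (Fact \ref{thm:nilp_match}) for \eqref{item:min_min:WDD_match}, delegation to the Kobayashi--Oshima results for \eqref{item:min_min:extra:discrete}--\eqref{item:min_min:extra:nilp_projectionK}, and a case-by-case Satake check for \eqref{item:min_min:classification}. Two steps, however, have genuine gaps. First, your argument for \eqref{item:min_min:WDD_match} $\Leftrightarrow$ \eqref{item:min_min:WDD_extended} inspects only the black nodes, but ``matching'' has a second clause: nodes joined by an arrow must carry equal weights. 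To conclude that failure to match forces a black node adjacent to the affine node, you must exclude the possibility that the match fails because two arrow-joined white nodes receive different weights in $\Psi_{H_\phi}$. The paper does this by invoking a diagram involution $\sigma^*$ (Helminck) with $\sigma^*\alpha=\beta$ for arrow-joined pairs and $\sigma^*\phi=\phi$ (since $\phi$ is the unique dominant longest root), giving $\langle\alpha,-\phi\rangle=\langle\beta,-\phi\rangle$, so the arrow condition holds automatically. Without this, your ``precisely when'' is unjustified. (The case $\mathfrak{g}_\C\simeq\mathfrak{sl}(2,\C)$, where the extended-diagram description of the weights does not apply, also needs a separate sentence.)

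Second, the bridge between the root-theoretic conditions \eqref{item:min_min:one_dim_highest}, \eqref{item:min_min:real} and the orbit-theoretic ones is the weakest link in your plan: you propose \eqref{item:min_min:min_min} $\Leftrightarrow$ \eqref{item:min_min:one_dim_highest} ``via weighted Dynkin diagrams, identifying the diagram of $\ming$ from the restricted-root data,'' which you do not carry out (in the paper that identification is a separate, later computation, Proposition \ref{prop:A_lambda_and_H_phi}, and is not used here). The paper's bridge is lighter: \eqref{item:min_min:min_meets_g} $\Leftrightarrow$ \eqref{item:min_min:real} because $\minC$ meets $\mathfrak{g}$ iff its hyperbolic element $H_\phi$ lies in $\mathfrak{a}$ (Fact \ref{thm:nilp_match}), and $H_\phi\in\mathfrak{a}$ iff $\phi$ is real; and \eqref{item:min_min:one_dim_highest} $\Leftrightarrow$ \eqref{item:min_min:real} because $\phi$ real and longest forces any $\alpha$ with $\alpha|_{\mathfrak{a}}=\lambda$ to equal $\phi$, while $\phi$ non-real gives $\tau^*\phi\neq\phi$ with the same restriction $\lambda$. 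Your ``standard count'' gestures at the latter, but you should replace your vague \eqref{item:min_min:min_min} $\Leftrightarrow$ \eqref{item:min_min:one_dim_highest} step with the explicit $H_\phi\in\mathfrak{a}$ argument to actually close the cycle of equivalences.
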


The equivalences among 
\eqref{item:min_min:min_min}, 
\eqref{item:min_min:min_meets_g}
and 
\eqref{item:min_min:min_meets_p}
follow from the definition of $\ming$ and \cite[Proposition 1.11]{Sekiguchi87}.

By the list of the Satake diagrams of non-compact real simple Lie algebras (see also Table \ref{table:Satake_diagrams} for the Satake diagram of each $\mathfrak{g}$)
 and the extended Dynkin diagrams of complex simple Lie algebras,
one can easily check the equivalence 
\eqref{item:min_min:WDD_extended} 
$\Leftrightarrow$
\eqref{item:min_min:classification}.

The equivalences among
\eqref{item:min_min:real}, 
\eqref{item:min_min:extra:discrete},
\eqref{item:min_min:extra:nilp_projection},
\eqref{item:min_min:extra:nilp_projectionK}
and 
\eqref{item:min_min:classification}
were proved by \cite[Lemma 4.6 and Theorem 5.2]{Kobayashi-Oshima2013-gK}
in a more general setting.
In particular, the equivalences 
\eqref{item:min_min:real} 
$\Leftrightarrow$ 
\eqref{item:min_min:extra:discrete}
$\Leftrightarrow$ 
\eqref{item:min_min:extra:nilp_projection} 
and 
\eqref{item:min_min:real} 
$\Leftrightarrow$
\eqref{item:min_min:extra:discreteK} 
$\Leftrightarrow$
\eqref{item:min_min:extra:nilp_projectionK}
can be obtained by applying their results
for the symmetric pairs 
$(\mathfrak{g}_\C,\mathfrak{g})$ and 
$(\mathfrak{g}_\C,\mathfrak{k}_\C)$,
respectively
(see also \cite[Remark 4.5]{Kobayashi-Oshima2013-gK} 
for the discrete decomposability of a representation of $G$ 
with respect to a symmetric pair $(G,H)$ and its associated pair $(G,H^a)$).

In this section,
we give a proof of the remaining equivalences,
namely,
the equivalences among 
\eqref{item:min_min:min_meets_g},
\eqref{item:min_min:one_dim_highest},
\eqref{item:min_min:real},
\eqref{item:min_min:WDD_match}
and 
\eqref{item:min_min:WDD_extended}.

Note that 
the equivalence 
\eqref{item:min_min:min_meets_g} 
$\Leftrightarrow$
\eqref{item:min_min:real},
which will be proved in Section \ref{subsection:proofProp} of this paper,
is used in a proof of 
\cite[Corollary 5.9]{Kobayashi-Oshima2013-gK}.

\begin{rem}
The equivalences among
\eqref{item:min_min:min_meets_g},
\eqref{item:min_min:extra:nilp_projectionK}
and
\eqref{item:min_min:classification} in Proposition $\ref{prop:min_min}$
were stated on Brylinski's paper \cite{Brylinski98} without proof.
It should be noted that Brylinski \cite{Brylinski98} also claimed that 
the following condition on $\mathfrak{g}$ is also equivalent to the condition \eqref{item:min_min:min_meets_g}$:$
\begin{itemize}
\item $K_\mathbb{C}$ has a Zariski open orbit in $\Orbit_{\min}^{G_\mathbb{C}}$, where $K_\mathbb{C}$ is the adjoint group of $\mathfrak{k}_\mathbb{C}$.
\end{itemize}
\end{rem}

\subsection{Satake diagrams and weighted Dynkin diagrams of complex nilpotent orbits}\label{subsection:Satake_diagrams}

In order to explain the notation in \eqref{item:min_min:WDD_match},
we first recall the definition of the Satake diagram of a real form $\mathfrak{g}$ of $\mathfrak{g}_\mathbb{C}$ briefly.
All facts which will be used for the definition of the Satake diagrams can be found in \cite{Araki62} or \cite{Satake60}.
Throughout this subsection, 
$\mathfrak{g}_\mathbb{C}$ can be a general complex semisimple Lie algebra and 
$\mathfrak{g}$ a general real form of $\mathfrak{g}_\mathbb{C}$.

We fix a Cartan decomposition $\mathfrak{g} = \mathfrak{k} + \mathfrak{p}$ of $\mathfrak{g}$. 
Take a maximal abelian subspace $\mathfrak{a}$ in $\mathfrak{p}$, and extend it to a maximal abelian subspace $\mathfrak{h} = \sqrt{-1}\mathfrak{t} + \mathfrak{a}$ in $\sqrt{-1}\mathfrak{k} + \mathfrak{p}$.
Then the complexification, denoted by $\mathfrak{h}_\mathbb{C}$, of $\mathfrak{h}$ is a Cartan subalgebra of $\mathfrak{g}_\mathbb{C}$, and $\mathfrak{h}$ coincides with the real form 
\[
\{ X \in \mathfrak{h}_\mathbb{C} \mid \alpha(X) \in \mathbb{R} \ \text{for any } \alpha \in \Delta(\mathfrak{g}_\mathbb{C},\mathfrak{h}_\mathbb{C})\}
\]
of $\mathfrak{h}_\mathbb{C}$ where $\Delta(\mathfrak{g}_\mathbb{C},\mathfrak{h}_\mathbb{C})$ is the reduced root system for $(\mathfrak{g}_\mathbb{C},\mathfrak{h}_\mathbb{C})$. 
Let us denote by \[
\Sigma(\mathfrak{g},\mathfrak{a}) := \{ \alpha|_\mathfrak{a} \mid \alpha \in \Delta(\mathfrak{g}_\mathbb{C},\mathfrak{h}_\mathbb{C}) \} \setminus \{0\} \subset \mathfrak{a}^*
\] the restricted root system for $(\mathfrak{g},\mathfrak{a})$.
We will denote by $W(\mathfrak{g},\mathfrak{a})$, $W(\mathfrak{g}_\mathbb{C},\mathfrak{h}_\mathbb{C})$ the Weyl group of $\Sigma(\mathfrak{g},\mathfrak{a})$, $\Delta(\mathfrak{g}_\mathbb{C},\mathfrak{h}_\mathbb{C})$, respectively.
Fix an ordering on $\mathfrak{a}$ and extend it to an ordering on $\mathfrak{h}$.
We write $\Sigma^+(\mathfrak{g},\mathfrak{a})$, $\Delta^+(\mathfrak{g}_\mathbb{C},\mathfrak{h}_\mathbb{C})$ for the positive system of $\Sigma(\mathfrak{g},\mathfrak{a})$, $\Delta(\mathfrak{g}_\mathbb{C},\mathfrak{h}_\mathbb{C})$ corresponding to the ordering on $\mathfrak{a}$, $\mathfrak{h}$, respectively.
Then $\Sigma^+(\mathfrak{g},\mathfrak{a})$ can be written by 
\[
\Sigma^+(\mathfrak{g},\mathfrak{a}) = 
\{ \alpha|_\mathfrak{a} \mid \alpha \in \Delta^+(\mathfrak{g}_\mathbb{C},\mathfrak{h}_\mathbb{C}) \} \setminus \{0\}.
\]
We denote by $\Pi$ the fundamental system of $\Delta^+(\mathfrak{g}_\mathbb{C},\mathfrak{h}_\mathbb{C})$.
Then \[
\overline{\Pi} = \{\, \alpha|_\mathfrak{a} \mid \alpha \in \Pi \,\} \setminus \{0\}
\] becomes the simple system of $\Sigma^+(\mathfrak{g},\mathfrak{a})$.
Let us denote by $\Pi_0$ the set of all simple roots in $\Pi$ whose restrictions to $\mathfrak{a}$ are zero.

The Satake diagram $S$ of $\mathfrak{g}$ consists of the following three data: 
the Dynkin diagram of $\mathfrak{g}_\mathbb{C}$ with nodes $\Pi$, 
black nodes $\Pi_0$ in $S$, 
and arrows joining $\alpha \in \Pi \setminus \Pi_0$ and $\beta \in \Pi \setminus \Pi_0$ in $S$ 
whose restrictions to $\mathfrak{a}$ are the same.

Second, we define the relation ``match'' 
 between an weighted Dynkin diagram and a Satake diagram as follows:

\begin{defn}[{\cite[Definition 7.3]{Okuda13cls}}]\label{defn:match}
Let $\Psi_H \in \Map(\Pi,\mathbb{R})$ be a weighted Dynkin diagram 
$($see Section $\ref{subsection:WDD_of_nilp}$ for the definition$)$ 
and $S$ the Satake diagram of $\mathfrak{g}$ with nodes $\Pi$ defined above.
We say that $\Psi_H$ \textit{matches} $S$ if all the weights on black nodes are zero and any pair of nodes joined by an arrow has the same weights.
\end{defn}

\begin{rem}
The concept of ``match'' appeared earlier in Djokovic \cite{Djokovic82}
$($weighted Satake diagrams$)$ and Sekiguchi \cite[Proposition 1.16]{Sekiguchi84}. 
\end{rem}

The following two facts were proved in \cite{Okuda13cls}.
In particular, by using Fact \ref{thm:nilp_match},
one can easily check whether or not a given complex nilpotent orbit meets a given real form.

\begin{fact}\label{fact:a_to_Satake}
The bijection $\Psi$ between $\mathfrak{h}$ and $\Map(\Pi,\mathbb{R})$ defined in Section $\ref{subsection:Hyperbolic_orbits}$ induces a bijection below$:$
\[
\mathfrak{a} \bijarrow \{\, \Psi_H \in \Map(\Pi,\mathbb{R}) \mid \text{$\Psi_H$ matches $S$} \,\}.
\]
\end{fact}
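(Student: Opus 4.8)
The plan is to realize $\mathfrak{a}$ inside $\mathfrak{h}$ as the fixed-point set of the conjugation of $\mathfrak{g}_\C$ determined by $\mathfrak{g}$, and then to translate the equation ``$H$ is fixed'' into the matching condition using the combinatorics underlying the Satake diagram. Since $\Psi\colon\mathfrak{h}\to\Map(\Pi,\R)$ is already known to be a bijection, it suffices to prove that for $H\in\mathfrak{h}$ one has $H\in\mathfrak{a}$ if and only if $\Psi_H$ matches $S$; the asserted bijection is then the restriction of $\Psi$. Let $\tau$ be the conjugation of $\mathfrak{g}_\C$ with respect to the real form $\mathfrak{g}$. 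Because $\mathfrak{a}\subset\mathfrak{p}\subset\mathfrak{g}$ whereas $\sqrt{-1}\mathfrak{t}\subset\sqrt{-1}\mathfrak{k}$, the antilinear involution $\tau$ preserves $\mathfrak{h}=\sqrt{-1}\mathfrak{t}+\mathfrak{a}$ and acts on it by $+1$ on $\mathfrak{a}$ and by $-1$ on $\sqrt{-1}\mathfrak{t}$; hence $\mathfrak{a}=\{H\in\mathfrak{h}\mid\tau H=H\}$. Moreover $\tau$ preserves $\mathfrak{h}_\C$, so it induces an involution $\tau^*\colon\mathfrak{h}^*\to\mathfrak{h}^*$, $\tau^*\alpha:=\alpha\circ\tau$, which permutes $\Delta(\mathfrak{g}_\C,\mathfrak{h}_\C)$ because $\tau\big((\mathfrak{g}_\C)_\alpha\big)=(\mathfrak{g}_\C)_{\tau^*\alpha}$. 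As $\Pi$ is a basis of $\mathfrak{h}^*$, we conclude that $H\in\mathfrak{a}$ if and only if $(\tau^*\alpha)(H)=\alpha(H)$ for every $\alpha\in\Pi$, so the statement reduces to comparing this linear system with the matching conditions on $\Psi_H$.

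The inclusion $\mathfrak{a}\subseteq\{H\in\mathfrak{h}\mid\Psi_H\text{ matches }S\}$ is immediate: for $H\in\mathfrak{a}$ and $\alpha\in\Pi$ one has $\alpha(H)=(\alpha|_{\mathfrak{a}})(H)=\bar\alpha(H)$, so $\alpha(H)=0$ when $\bar\alpha=0$, i.e.\ when $\alpha\in\Pi_0$, and $\alpha(H)=\beta(H)$ when $\bar\alpha=\bar\beta$, i.e.\ when $\alpha$ and $\beta$ are joined by an arrow of $S$ --- which is precisely the definition of ``$\Psi_H$ matches $S$''. For the reverse inclusion I would invoke the description of $\tau^*$ that is built into the definition of the Satake diagram (see \cite{Araki62} or \cite{Satake60}): with the ordering on $\mathfrak{h}$ extending that on $\mathfrak{a}$ as in Section~\ref{subsection:Satake_diagrams}, the involution $\tau^*$ preserves $\Delta_0:=\Delta(\mathfrak{g}_\C,\mathfrak{h}_\C)\cap\mathbb{Z}\Pi_0$, permutes $\Delta^+\setminus\Delta_0^+$, carries each $\alpha\in\Pi_0$ into $\mathbb{Z}\Pi_0$, and carries each $\alpha\in\Pi\setminus\Pi_0$ to a positive root $\tau^*\alpha$ with $\overline{\tau^*\alpha}=\bar\alpha$. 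Expanding $\tau^*\alpha$ in the basis $\Pi$ and restricting to $\mathfrak{a}$, the equality $\overline{\tau^*\alpha}=\bar\alpha$ together with the fact that $\bar\alpha$ is a simple restricted root forces $\tau^*\alpha=\alpha+\nu$ or $\tau^*\alpha=\alpha'+\nu$, where $\nu$ is a non-negative integer combination of $\Pi_0$ and $\alpha'\in\Pi\setminus\Pi_0$ is the arrow-partner of $\alpha$ (the second case occurring only when such a partner exists).

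Granting this, suppose $\Psi_H$ matches $S$. For $\alpha\in\Pi_0$ we get $(\tau^*\alpha)(H)=0=\alpha(H)$, since $\tau^*\alpha\in\mathbb{Z}\Pi_0$ and all weights on black nodes vanish. For $\alpha\in\Pi\setminus\Pi_0$ the vanishing of the weights on black nodes gives $\nu(H)=0$, and the arrow condition gives $\alpha'(H)=\alpha(H)$ whenever the partner exists, so in either case $(\tau^*\alpha)(H)=\alpha(H)$. Hence $\tau H=H$, i.e.\ $H\in\mathfrak{a}$, which completes the equivalence and hence the proof. The main obstacle is the structural fact about $\tau^*$ used in the previous paragraph: essentially, that the arrows of the Satake diagram record exactly the coincidences among the restrictions $\bar\alpha$ of the white simple roots and that the corresponding coefficient equals $1$. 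This is part of the construction and classification of Satake diagrams and I would quote it rather than reprove it; one subsidiary point worth checking is that $\tau^*\alpha$ is indeed a \emph{positive} root for $\alpha\in\Pi\setminus\Pi_0$, which holds because $\overline{\tau^*\alpha}=\bar\alpha\in\Sigma^+(\mathfrak{g},\mathfrak{a})$ and, for the chosen ordering, a root is positive as soon as its restriction to $\mathfrak{a}$ is positive.
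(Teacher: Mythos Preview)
Your proof is correct. The characterization $\mathfrak{a}=\{H\in\mathfrak{h}\mid\tau H=H\}$ together with the expansion of $\tau^*\alpha$ in the basis $\Pi$ does the job, and the structural fact you invoke about $\tau^*$ is exactly the standard one from the Satake construction: for $\alpha\in\Pi_0$ one has $\tau^*\alpha=-\alpha\in\mathbb{Z}\Pi_0$, while for $\alpha\in\Pi\setminus\Pi_0$ the positive root $\tau^*\alpha$ restricts to the simple restricted root $\bar\alpha$, which forces its unique white simple summand to be either $\alpha$ or its arrow-partner $\alpha'$ (with coefficient~$1$), the remainder lying in $\mathbb{Z}_{\ge 0}\Pi_0$. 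Your verification of positivity of $\tau^*\alpha$ for white $\alpha$ via the ordering convention is also correct.

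As for the comparison: the paper does not actually give a proof of this statement. It is recorded as a \emph{Fact} and attributed to \cite{Okuda13cls} (together with Fact~\ref{thm:nilp_match}). So you have supplied a self-contained argument where the paper simply quotes the result. The paper does, incidentally, use precisely the two ingredients underlying your argument elsewhere: the formula $\tau^*\alpha=-\alpha$ on imaginary roots appears implicitly in the proof of Lemma~\ref{lem:norm_of_phi}, and the fact that at most two white simple roots share a restriction is invoked (with the same Araki reference) in the proof of Lemma~\ref{lem:basis_of_it}. Your route is thus fully compatible with the paper's toolkit; it just makes explicit what the paper outsources to a citation.
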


\begin{fact}[{\cite[Proposition 7.8 and Theorem 7.10]{Okuda13cls}}]\label{thm:nilp_match}
Let $\mathfrak{g}_\mathbb{C}$ be a complex semisimple Lie algebra and $\mathfrak{g}$ a real form of $\mathfrak{g}_\mathbb{C}$.
For a complex nilpotent orbit $\Orbit^{G_\mathbb{C}}$ in $\mathfrak{g}_\mathbb{C}$,
the following conditions are equivalent$:$
\begin{enumerate}
\item The orbit $\Orbit^{G_\mathbb{C}}$ meets $\mathfrak{g}$.
\item The hyperbolic element $H_{\Orbit}$ corresponding to $\Orbit^{G_\mathbb{C}}$ is in $\mathfrak{a}$ $($see Section $\ref{subsection:Hyperbolic_orbits}$ for the notation$)$.
\item The weighted Dynkin diagram of $\Orbit^{G_\mathbb{C}}$ matches the Satake diagram of $\mathfrak{g}$ 
$($see Section $\ref{subsection:Satake_diagrams}$ for the notation$)$.
\end{enumerate}
\end{fact}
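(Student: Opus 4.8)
The plan is to establish the two equivalences (ii)$\Leftrightarrow$(iii) and (i)$\Leftrightarrow$(ii) separately; the former is essentially formal once Fact~\ref{fact:a_to_Satake} is available, while the latter carries the geometric content. Throughout one may assume $\Orbit^{G_\C}\neq[0]$, since for the zero orbit all three conditions hold trivially. First I would dispatch (ii)$\Leftrightarrow$(iii): by definition the weighted Dynkin diagram of $\Orbit^{G_\C}$ is $\Psi_{H_\Orbit}$, where $H_\Orbit$ is the distinguished element of $\mathfrak{h}_+\subset\mathfrak{h}$ attached to $\Orbit^{G_\C}$ in Section~\ref{subsection:WDD_of_nilp}; and by Fact~\ref{fact:a_to_Satake} the bijection $\Psi\colon\mathfrak{h}\to\Map(\Pi,\R)$ of Section~\ref{subsection:WDD_of_nilp} carries $\mathfrak{a}$ exactly onto the set of weighted Dynkin diagrams matching the Satake diagram $S$ of $\mathfrak{g}$. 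Hence $H_\Orbit\in\mathfrak{a}$ holds if and only if $\Psi_{H_\Orbit}$ matches $S$, which is precisely (ii)$\Leftrightarrow$(iii).

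For (i)$\Rightarrow$(ii) I would start from a nonzero $X\in\Orbit^{G_\C}\cap\mathfrak{g}$, apply the Jacobson--Morozov theorem inside the real semisimple Lie algebra $\mathfrak{g}$, and then invoke Sekiguchi's normalization of $\mathfrak{sl}_2$-triples \cite{Sekiguchi84,Sekiguchi87}: after replacing $X$ by a suitable $\Int(\mathfrak{g})$-conjugate (still in $\Orbit^{G_\C}\cap\mathfrak{g}$) there is an $\mathfrak{sl}_2$-triple $(H,X,Y)$ in $\mathfrak{g}$ with $\theta H=-H$, i.e.\ $H\in\mathfrak{p}$. Since every element of $\mathfrak{p}$ is $K$-conjugate into the closed chamber $\overline{\mathfrak{a}}_+\subset\mathfrak{a}$, I may further arrange $H\in\overline{\mathfrak{a}}_+$. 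The key elementary observation is then $\overline{\mathfrak{a}}_+\subset\mathfrak{h}_+$: for $\alpha\in\Pi$ one has $\alpha|_\mathfrak{a}\in\{0\}\cup\Sigma^+(\mathfrak{g},\mathfrak{a})$, hence $\alpha(H)=(\alpha|_\mathfrak{a})(H)\ge 0$ when $H\in\overline{\mathfrak{a}}_+$. Thus $H\in\mathfrak{h}_+$ is a neutral element of an $\mathfrak{sl}_2$-triple whose nilpositive element lies in $\Orbit^{G_\C}$, so the uniqueness in the Malcev--Kostant description recalled in Section~\ref{subsection:WDD_of_nilp} forces $H=H_\Orbit$, and therefore $H_\Orbit\in\mathfrak{a}$.

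For the converse (ii)$\Rightarrow$(i) I would fix $X,Y\in\Orbit^{G_\C}$ with $(H_\Orbit,X,Y)$ an $\mathfrak{sl}_2$-triple, assuming $H_\Orbit\in\mathfrak{a}$. Because $H_\Orbit\in\mathfrak{g}$, the $\ad H_\Orbit$-eigenspace grading $\mathfrak{g}_\C=\bigoplus_{j\in\mathbb{Z}}\mathfrak{g}_\C[j]$ is defined over $\R$, so, setting $\mathfrak{g}[j]:=\mathfrak{g}_\C[j]\cap\mathfrak{g}$, the real subspace $\mathfrak{g}[2]$ is Zariski-dense in $\mathfrak{g}_\C[2]$. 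By Kostant's analysis of the grading attached to an $\mathfrak{sl}_2$-triple \cite{Kostant59}, the set of $X'\in\mathfrak{g}_\C[2]$ extending to an $\mathfrak{sl}_2$-triple with neutral element $H_\Orbit$ is a nonempty Zariski-open subset of the affine space $\mathfrak{g}_\C[2]$, and it is contained in $\Orbit^{G_\C}$ (any such orbit $G_\C\cdot X'$ has neutral element $H_\Orbit\in\mathfrak{h}_+$, hence equals $\Orbit^{G_\C}$ by Malcev's injectivity \cite{Malcev50}). A nonempty Zariski-open subset of $\mathfrak{g}_\C[2]$ necessarily meets the Zariski-dense subspace $\mathfrak{g}[2]$, so $\Orbit^{G_\C}\cap\mathfrak{g}[2]\neq\emptyset$, and a fortiori $\Orbit^{G_\C}\cap\mathfrak{g}\neq\emptyset$.

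I expect the main obstacle to be the normalization step in (i)$\Rightarrow$(ii) --- producing an $\Int(\mathfrak{g})$-conjugate $\mathfrak{sl}_2$-triple whose neutral element sits in $\overline{\mathfrak{a}}_+$ --- since this is where the structure theory of the real form (normal $\mathfrak{sl}_2$-triples and $K$-conjugacy into a maximal abelian subspace of $\mathfrak{p}$) genuinely enters; the two Malcev--Kostant rigidity statements and Fact~\ref{fact:a_to_Satake} then only perform the remaining bookkeeping.
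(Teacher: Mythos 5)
The paper does not prove this statement: it is quoted as a \emph{Fact} and attributed to \cite[Proposition 7.8 and Theorem 7.10]{Okuda13cls}, so there is no in-paper argument to compare against. Your proof is correct and is essentially the standard route (and, as far as one can tell, the one taken in the cited reference): (ii)$\Leftrightarrow$(iii) is immediate from Fact~\ref{fact:a_to_Satake} together with the definition of the weighted Dynkin diagram as $\Psi_{H_{\Orbit}}$; (i)$\Rightarrow$(ii) uses Jacobson--Morozov over $\R$, the normalization to a Cayley (normal) triple with neutral element in $\mathfrak{p}$, $K$-conjugation into $\overline{\mathfrak{a}}_+$, the inclusion $\overline{\mathfrak{a}}_+\subset\mathfrak{h}_+$ coming from the compatibility of the orderings on $\mathfrak{a}$ and $\mathfrak{h}$ fixed in Section~\ref{subsection:Satake_diagrams}, and the uniqueness of $H_{\Orbit}$ in $\mathfrak{h}_+$; (ii)$\Rightarrow$(i) uses Kostant's lemma that the nilpositive elements attached to a fixed neutral element $H_{\Orbit}$ form a nonempty Zariski-open subset (a single open $Z_{G_\C}(H_{\Orbit})$-orbit) of the degree-two eigenspace, which must meet the Zariski-dense real form $\mathfrak{g}[2]$ since the grading is defined over $\R$ when $H_{\Orbit}\in\mathfrak{a}\subset\mathfrak{g}$. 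All the ingredients you invoke are available in the generality stated (arbitrary real form of a complex semisimple Lie algebra), and I see no gap.
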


We give examples for Fact \ref{thm:nilp_match} as follows:

\begin{example}
If $\mathfrak{g}$ is a split real form of $\mathfrak{g}_\mathbb{C}$,
then all nodes of the Satake diagram of $\mathfrak{g}$ are white with no arrows.
Thus all complex nilpotent orbits in $\mathfrak{g}_\mathbb{C}$ meets $\mathfrak{g}$ since all weighted Dynkin diagram matches the Satake diagram of $\mathfrak{g}$.
\end{example}

\begin{example}
If $\mathfrak{u}$ is a compact real form of $\mathfrak{g}_\mathbb{C}$,
then all nodes of the Satake diagram of $\mathfrak{u}$ are black.
Thus any non-zero complex nilpotent orbit in $\mathfrak{g}_\mathbb{C}$ does not meet $\mathfrak{u}$ since any non-zero weighted Dynkin diagram does not matches the Satake diagram of $\mathfrak{u}$.
\end{example}

By the list of the weighted Dynkin diagrams of the minimal nilpotent orbit $\minC$ for simple $\mathfrak{g}_\mathbb{C}$ (cf. \cite[Chapter 5.4 and 8.4]{Collingwood-McGovern93}) and the list of the Satake diagrams of non-compact real forms $\mathfrak{g}$, one can easily check that $\minC$ meets $\mathfrak{g}$ or not as follows:

\begin{example}\label{example:min_meets_g}
In Table $\ref{table:Satake_diagrams}$, 
we check whether or not the minimal nilpotent orbit $\minC$ in a complex simple Lie algebra $\mathfrak{g}_\mathbb{C}$ meets a non-compact real form $\mathfrak{g}$.
\begin{longtable}{lll} 
\caption{List of the weighted Dynkin diagram of $\minC$ and the Satake diagram of $\mathfrak{g}$.} \\
\hline \hline
	$\mathfrak{g}$ & Weighted Dynkin diagram of $\minC$ & $\minC$ meets $\mathfrak{g}$? \\ 
	& on the Satake diagram of $\mathfrak{g}$ \\ \midrule 
	$\mathfrak{sl}(n,\mathbb{R})$ & \begin{xy}
	*++!D{1} *\cir<2pt>{}        ="A",
	(10,0) *++!D{0} *\cir<2pt>{} ="B",
	(20,0) *++!D{0} *\cir<2pt>{} ="C",
	(35,0) *++!D{0} *\cir<2pt>{} ="D",
	(45,0) *++!D{0} *\cir<2pt>{} ="E",
	(55,0) *++!D{1} *+!U{\alpha_{n-1}} *\cir<2pt>{} ="F",
	\ar@{-} "A";"B"
	\ar@{-} "B";"C"
	\ar@{-} "C"; (25,0)
	\ar@{.} (25,0) ; (30,0)^*!U{\cdots}
	\ar@{-} (30,0) ; "D"
	\ar@{-} "D";"E"
	\ar@{-} "E";"F"
\end{xy} & Yes \\ \hline
	$\mathfrak{su}^*(2k)$ & \begin{xy}
	*++!D{1} *{\bullet}        ="A",
	(10,0) *++!D{0} *\cir<2pt>{} ="B",
	(20,0) *++!D{0} *{\bullet} ="C",
	(35,0) *++!D{0} *{\bullet} ="D",
	(45,0) *++!D{0} *\cir<2pt>{} ="E",
	(55,0) *++!D{1} *+!U{\alpha_{2k-1}} *{\bullet} ="F",
	\ar@{-} "A";"B"
	\ar@{-} "B";"C"
	\ar@{-} "C"; (25,0)
	\ar@{.} (25,0) ; (30,0)^*!U{\cdots}
	\ar@{-} (30,0) ; "D"
	\ar@{-} "D";"E"
	\ar@{-} "E";"F"
\end{xy} & No \\ \hline
	\raisebox{-1cm}{$\mathfrak{su}(n-p,p)$} & \begin{xy}
	*++!D{1} *+!UR{\alpha_1} *\cir<2pt>{} ="A",
	(10,0) *++!D{0} *\cir<2pt>{} ="B",
	(25,0) *++!D{0} *+!UR{\alpha_p} *\cir<2pt>{} ="C",
	(35,0) *++!D{0} *{\bullet} ="D",
	(40,-5) *++!D{0} *{\bullet} ="E",
	(40,-20) *++!DR{0} *{\bullet} ="F",
	(35,-25) *++!DR{0} *{\bullet} ="G",
	(25,-25) *++!DR{0} *+!U{\alpha_{n-p}} *\cir<2pt>{} ="H",
	(10,-25) *++!DR{0} *\cir<2pt>{} ="I",
	(0,-25) *++!DR{1} *+!U{\alpha_{n-1}} *\cir<2pt>{} ="J",
	\ar@{-} "A";"B"
	\ar@{-} "B"; (15,0)
	\ar@{.} (15,0) ; (20,0)^*!U{\cdots}
	\ar@{-} (20,0) ;"C"
	\ar@{-} "C";"D"
	\ar@{-} "D";"E"
    \ar@{-} "E";(40,-10)
    \ar@{.} (40,-10);(40,-15)
    \ar@{-} (40,-15);"F"
	\ar@{-} "G";"F"
	\ar@{-} "H";"G"
	\ar@{-} (20,-25) ;"H"
	\ar@{.} (15,-25) ; (20,-25)^*!U{\cdots}
	\ar@{-} "I"; (15,-25)
	\ar@{-} "J";"I"
	\ar@{<->} (0,-3);(0,-22)
	\ar@{<->} (10,-3);(10,-22)
	\ar@{<->} (25,-3);(25,-22)
\end{xy} & Yes \\ \hline
	\raisebox{-0.5cm}{$\mathfrak{su}(k,k)$} & \begin{xy}
	*++!D{1} *+!UR{\alpha_1} *\cir<2pt>{} ="A",
	(10,0) *++!D{0} *\cir<2pt>{} ="B",
	(25,0) *++!D{0} *+!UR{\alpha_{k-1}} *\cir<2pt>{} ="C",
	(30,-8) *++!D{0} *+!L{\alpha_{k}} *\cir<2pt>{} ="D",
	(25,-16) *++!DR{0} *\cir<2pt>{} ="E",
	(10,-16) *++!DR{0} *\cir<2pt>{} ="F",
	(0,-16) *++!DR{1} *+!U{\alpha_{2k-1}} *\cir<2pt>{} ="G",
	\ar@{-} "A";"B"
	\ar@{-} "B";(15,0) 
	\ar@{.} (15,0);(20,0)^*!U{\cdots}
	\ar@{-} (20,0);"C"
	\ar@{-} "C";"D"
	\ar@{-} "D";"E"
	\ar@{-} "E";(20,-16)
	\ar@{.} (15,-16);(20,-16)^*!U{\cdots}
	\ar@{-} (15,-16);"F"
	\ar@{-} "F";"G"
	\ar@{<->} (0,-2);(0,-14)
	\ar@{<->} (10,-2);(10,-14)
	\ar@{<->} (25,-2);(25,-14)
\end{xy} & Yes \\ \hline	
	$\mathfrak{so}(2k+1-p,p)$ & \begin{xy}
	*++!D{0} *\cir<2pt>{}        ="A",
	(7,0) *++!D{1} *\cir<2pt>{} ="B",
	(14,0) *++!D{0} *\cir<2pt>{} ="C",
	(28,0) *++!D{0} *+!U{\alpha_p} *\cir<2pt>{} ="D",
	(35,0) *++!D{0} *{\bullet} ="E",
	(49,0) *++!D{0} *{\bullet} ="F",	
	(56,0) *++!D{0} *+!U{\alpha_{k}} *{\bullet} ="G",
	\ar@{-} "A";"B"
	\ar@{-} "B";"C"
	\ar@{-} "C"; (19,0)
	\ar@{.} (19,0) ; (23,0)^*!U{\cdots}
	\ar@{-} (23,0) ; "D"
	\ar@{-} "D";"E"
	\ar@{-} "E"; (40,0)
	\ar@{.} (40,0) ; (44,0)^*!U{\cdots}
	\ar@{-} (44,0) ; "F"
	\ar@{=>} "F";"G"
\end{xy} & No if $p = 1$ \\ \hline
	$\mathfrak{so}(k+1,k)$ & \begin{xy}
	*++!D{0} *\cir<2pt>{}        ="A",
	(7,0) *++!D{1} *\cir<2pt>{} ="B",
	(14,0) *++!D{0} *\cir<2pt>{} ="C",
	(49,0) *++!D{0} *\cir<2pt>{} ="D",
	(56,0) *++!D{0} *+!U{\alpha_k} *\cir<2pt>{} ="E",
	\ar@{-} "A";"B"
	\ar@{-} "B";"C"
	\ar@{-} "C"; (19,0)
	\ar@{.} (19,0) ; (44,0)^*!U{\cdots}
	\ar@{-} (44,0) ; "D"
	\ar@{=>} "D";"E"
\end{xy} & Yes \\ \hline
	$\mathfrak{sp}(k,\mathbb{R})$ & \begin{xy}
	*++!D{1} *\cir<2pt>{}        ="A",
	(7,0) *++!D{0} *\cir<2pt>{} ="B",
	(14,0) *++!D{0} *\cir<2pt>{} ="C",
	(49,0) *++!D{0} *\cir<2pt>{} ="D",
	(56,0) *++!D{0} *+!U{\alpha_k} *\cir<2pt>{} ="E",
	\ar@{-} "A";"B"
	\ar@{-} "B";"C"
	\ar@{-} "C"; (19,0)
	\ar@{.} (19,0) ; (44,0)^*!U{\cdots}
	\ar@{-} (44,0) ; "D"
	\ar@{<=} "D";"E"
\end{xy} & Yes \\ \hline
	$\mathfrak{sp}(k-p,p)$ & \begin{xy}
	*++!D{1} *{\bullet}       ="A",
	(7,0) *++!D{0} *\cir<2pt>{} ="B",
	(14,0) *++!D{0} *{\bullet} ="C",
	(28,0) *++!D{0} *+!U{\alpha_{2p}} *\cir<2pt>{} ="D",
	(35,0) *++!D{0} *{\bullet} ="E",
	(49,0) *++!D{0} *{\bullet} ="F",	
	(56,0) *++!D{0} *+!U{\alpha_k} *{\bullet} ="G",
	\ar@{-} "A";"B"
	\ar@{-} "B";"C"	
	\ar@{-} "C"; (19,0)
	\ar@{.} (19,0) ; (23,0)^*!U{\cdots}
	\ar@{-} (23,0) ; "D"
	\ar@{-} "D";"E"
	\ar@{-} "E"; (40,0)
	\ar@{.} (40,0) ; (44,0)^*!U{\cdots}
	\ar@{-} (44,0) ; "F"
	\ar@{<=} "F";"G"
\end{xy} & No \\ \hline
	$\mathfrak{sp}(m,m)$ & \begin{xy}
	*++!D{1} *{\bullet}       ="A",
	(7,0) *++!D{0} *\cir<2pt>{} ="B",
	(14,0) *++!D{0} *{\bullet} ="C",
	(42,0) *++!D{0} *\cir<2pt>{} ="D",
	(49,0) *++!D{0} *{\bullet} ="E",	
	(56,0) *++!D{0} *+!U{\alpha_{2m}} *\cir<2pt>{} ="F",
	\ar@{-} "A";"B"
	\ar@{-} "B";"C"	
	\ar@{-} "C"; (19,0)
	\ar@{.} (19,0) ; (38,0)^*!U{\cdots}
	\ar@{-} (38,0) ; "D"
	\ar@{-} "D";"E"
	\ar@{<=} "E";"F"
\end{xy} & No \\ \hline
	$\mathfrak{so}(2k-p,p)$ & \begin{xy}
	*++!D{0} *\cir<2pt>{}        ="A",
	(7,0) *++!D{1} *\cir<2pt>{} ="B",
	(14,0) *++!D{0} *\cir<2pt>{} ="C",
	(28,0) *++!D{0} *+!U{\alpha_p} *\cir<2pt>{} ="D",
	(35,0) *++!D{0} *{\bullet} ="E",
	(49,0) *++!D{0} *{\bullet} ="F",	
	(56,-10) *++!D{0} *++!L{\alpha_{k-1}} *{\bullet} ="G",
	(56,10) *++!D{0} *++!L{\alpha_{k}} *{\bullet} ="H",
	\ar@{-} "A";"B"
	\ar@{-} "B";"C"
	\ar@{-} "C"; (19,0)
	\ar@{.} (19,0) ; (23,0)^*!U{\cdots}
	\ar@{-} (23,0) ; "D"
	\ar@{-} "D";"E"
	\ar@{-} "E"; (40,0)
	\ar@{.} (40,0) ; (44,0)^*!U{\cdots}
	\ar@{-} (44,0) ; "F"
	\ar@{-} "F";"G"
	\ar@{-} "F";"H"	
\end{xy} & No if $p=1$ \\ \hline
	$\mathfrak{so}(k+1,k-1)$ & \begin{xy}
	*++!D{0} *\cir<2pt>{}      ="A",
	(7,0) *++!D{1} *\cir<2pt>{} ="B",
	(14,0) *++!D{0} *\cir<2pt>{} ="C",
	(35,0) *++!D{0} *\cir<2pt>{} ="D",
	(42,0) *++!D{0} *\cir<2pt>{} ="E",
	(49,0) *++!D{0} *\cir<2pt>{} ="F",	
	(56,-10) *++!D{0} *++++!L{\alpha_{k-1}} *\cir<2pt>{} ="G",
	(56,10) *++!D{0} *++++!L{\alpha_k} *\cir<2pt>{} ="H",	
	\ar@{-} "A";"B"
	\ar@{-} "B";"C"	
	\ar@{-} "C"; (19,0)
	\ar@{.} (19,0) ; (30,0)^*!U{\cdots}
	\ar@{-} (30,0) ; "D"
	\ar@{-} "D";"E"
	\ar@{-} "E";"F"
	\ar@{-} "F";"G"
	\ar@{-} "F";"H"
	\ar@/_4mm/@{<->}@<-2mm> "G";"H"
\end{xy} & Yes \\ \hline
	$\mathfrak{so}(k,k)$ & \begin{xy}
	*++!D{0} *\cir<2pt>{}      ="A",
	(7,0) *++!D{1} *\cir<2pt>{} ="B",
	(14,0) *++!D{0} *\cir<2pt>{} ="C",
	(35,0) *++!D{0} *\cir<2pt>{} ="D",
	(42,0) *++!D{0} *\cir<2pt>{} ="E",
	(49,0) *++!D{0} *\cir<2pt>{} ="F",	
	(56,-10) *++!D{0} *++!L{\alpha_{k-1}} *\cir<2pt>{} ="G",
	(56,10) *++!D{0} *++!L{\alpha_k} *\cir<2pt>{} ="H",	
	\ar@{-} "A";"B"
	\ar@{-} "B";"C"	
	\ar@{-} "C"; (19,0)
	\ar@{.} (19,0) ; (30,0)^*!U{\cdots}
	\ar@{-} (30,0) ; "D"
	\ar@{-} "D";"E"
	\ar@{-} "E";"F"
	\ar@{-} "F";"G"
	\ar@{-} "F";"H"
\end{xy} & Yes \\ \hline
	$\mathfrak{so}^*(4m)$ & \begin{xy}
	*++!D{0} *{\bullet}       ="A",
	(7,0) *++!D{1} *\cir<2pt>{} ="B",
	(14,0) *++!D{0} *{\bullet} ="C",
	(35,0) *++!D{0} *\cir<2pt>{} ="D",
	(42,0) *++!D{0} *{\bullet} ="E",
	(49,0) *++!D{0} *+!L{\alpha_{2m-2}} *\cir<2pt>{} ="F",	
	(56,-10) *++!D{0} *++!U{} *{\bullet} ="G",
	(56,10) *++!D{0} *++!D{} *\cir<2pt>{} ="H",	
	\ar@{-} "A";"B"
	\ar@{-} "B";"C"	
	\ar@{-} "C"; (19,0)
	\ar@{.} (19,0) ; (30,0)^*!U{\cdots}
	\ar@{-} (30,0) ; "D"
	\ar@{-} "D";"E"
	\ar@{-} "E";"F"
	\ar@{-} "F";"G"
	\ar@{-} "F";"H"
\end{xy} & Yes \\ \hline
	$\mathfrak{so}^*(4m+2)$ & \begin{xy}
	*++!D{0} *{\bullet}       ="A",
	(7,0) *++!D{1} *\cir<2pt>{} ="B",
	(14,0) *++!D{0} *{\bullet} ="C",
	(35,0) *++!D{0} *{\bullet} ="D",
	(42,0) *++!D{0} *\cir<2pt>{} ="E",
	(49,0) *++!D{0} *+!L{} *{\bullet} ="F",	
	(56,-10) *++!D{0} *++++!L{\alpha_{2m}} *\cir<2pt>{} ="G",
	(56,10) *++!D{0} *++++!L{\alpha_{2m+1}} *\cir<2pt>{} ="H",	
	\ar@{-} "A";"B"
	\ar@{-} "B";"C"	
	\ar@{-} "C"; (19,0)
	\ar@{.} (19,0) ; (30,0)^*!U{\cdots}
	\ar@{-} (30,0) ; "D"
	\ar@{-} "D";"E"
	\ar@{-} "E";"F"
	\ar@{-} "F";"G"
	\ar@{-} "F";"H"
	\ar@/_4mm/@{<->}@<-2mm> "G";"H"	
\end{xy} & Yes \\ \hline	
	$\mathfrak{e}_{6(6)}$ & \begin{xy}
	*++!D{0} *\cir<2pt>{}        ="A",
	(10,0) *++!D{0} *\cir<2pt>{} ="B",
	(20,0) *++!D{0} *\cir<2pt>{} ="C",
	(30,0) *++!D{0} *\cir<2pt>{} ="D",
	(40,0) *++!D{0} *\cir<2pt>{} ="E",
	(20,-10) *++!L{1} *++!U{} *\cir<2pt>{} ="F",
	\ar@{-} "A";"B"
	\ar@{-} "B";"C"
	\ar@{-} "C";"D"
	\ar@{-} "D";"E"
	\ar@{-} "C";"F"
\end{xy} & Yes \\ \hline
	$\mathfrak{e}_{6(2)}$ & \begin{xy}
	*++!D{0} *\cir<2pt>{}        ="A",
	(10,0) *++!D{0} *\cir<2pt>{} ="B",
	(20,0) *+!D{0} *\cir<2pt>{} ="C",
	(30,0) *++!D{0} *\cir<2pt>{} ="D",
	(40,0) *++!D{0} *\cir<2pt>{} ="E",
	(20,-10) *++!L{1} *++!U{} *\cir<2pt>{} ="F",
	\ar@{-} "A";"B"
	\ar@{-} "B";"C"
	\ar@{-} "C";"D"
	\ar@{-} "D";"E"
	\ar@{-} "C";"F"
	\ar@(ru,lu) @<1mm> @{<->} "A" ; "E"
	\ar@/^4mm/ @<1mm> @{<->} "B" ; "D"	
\end{xy} & Yes \\ \hline
	$\mathfrak{e}_{6(-14)}$ & \begin{xy}
	*++!D{0} *\cir<2pt>{}        ="A",
	(10,0) *++!D{0} *{\bullet} ="B",
	(20,0) *++!D{0} *{\bullet} ="C",
	(30,0) *++!D{0} *{\bullet} ="D",
	(40,0) *++!D{0} *\cir<2pt>{} ="E",
	(20,-10) *++!L{1} *++!U{} *\cir<2pt>{} ="F",
	\ar@{-} "A";"B"
	\ar@{-} "B";"C"
	\ar@{-} "C";"D"
	\ar@{-} "D";"E"
	\ar@{-} "C";"F"
	\ar@(ru,lu) @<1mm> @{<->} "A" ; "E"
\end{xy} & Yes \\ \hline
	$\mathfrak{e}_{6(-26)}$ & \begin{xy}
	*++!D{0} *\cir<2pt>{}        ="A",
	(10,0) *++!D{0} *{\bullet} ="B",
	(20,0) *++!D{0} *{\bullet} ="C",
	(30,0) *++!D{0} *{\bullet} ="D",
	(40,0) *++!D{0} *\cir<2pt>{} ="E",
	(20,-10) *++!L{1} *++!U{} *{\bullet} ="F",
	\ar@{-} "A";"B"
	\ar@{-} "B";"C"
	\ar@{-} "C";"D"
	\ar@{-} "D";"E"
	\ar@{-} "C";"F"
\end{xy} & No \\ \hline
	$\mathfrak{e}_{7(7)}$ & \begin{xy}
	*++!D{0} *\cir<2pt>{}        ="A",
	(10,0) *++!D{0} *\cir<2pt>{} ="B",
	(20,0) *++!D{0} *\cir<2pt>{} ="C",
	(30,0) *++!D{0} *\cir<2pt>{} ="D",
	(40,0) *++!D{0} *\cir<2pt>{} ="E",
	(50,0) *++!D{1} *\cir<2pt>{} ="F",
	(30,-10) *++!L{0} *++!U{} *\cir<2pt>{} ="G",
	\ar@{-} "A";"B"
	\ar@{-} "B";"C"
	\ar@{-} "C";"D"
	\ar@{-} "D";"E"
	\ar@{-} "E";"F"
	\ar@{-} "D";"G"
\end{xy} & Yes \\ \hline	
	$\mathfrak{e}_{7(-5)}$ & \begin{xy}
	*++!D{0} *{\bullet}        ="A",
	(10,0) *++!D{0} *\cir<2pt>{} ="B",
	(20,0) *++!D{0} *{\bullet} ="C",
	(30,0) *++!D{0} *\cir<2pt>{} ="D",
	(40,0) *++!D{0} *\cir<2pt>{} ="E",
	(50,0) *++!D{1} *\cir<2pt>{} ="F",
	(30,-10) *++!L{0} *++!U{} *{\bullet} ="G",
	\ar@{-} "A";"B"
	\ar@{-} "B";"C"
	\ar@{-} "C";"D"
	\ar@{-} "D";"E"
	\ar@{-} "E";"F"
	\ar@{-} "D";"G"
\end{xy} & Yes \\ \hline
	$\mathfrak{e}_{7(-25)}$ & \begin{xy}
	*++!D{0} *\cir<2pt>{}        ="A",
	(10,0) *++!D{0} *\cir<2pt>{} ="B",
	(20,0) *++!D{0} *{\bullet} ="C",
	(30,0) *++!D{0} *{\bullet} ="D",
	(40,0) *++!D{0} *{\bullet} ="E",
	(50,0) *++!D{1} *\cir<2pt>{} ="F",
	(30,-10) *++!L{0} *++!U{} *{\bullet} ="G",
	\ar@{-} "A";"B"
	\ar@{-} "B";"C"
	\ar@{-} "C";"D"
	\ar@{-} "D";"E"
	\ar@{-} "E";"F"
	\ar@{-} "D";"G"
\end{xy} & Yes \\ \hline	
	$\mathfrak{e}_{8(8)}$ & \begin{xy}
	*++!D{1} *\cir<2pt>{}        ="A",
	(10,0) *++!D{0} *\cir<2pt>{} ="B",
	(20,0) *++!D{0} *\cir<2pt>{} ="C",
	(30,0) *++!D{0} *\cir<2pt>{} ="D",
	(40,0) *++!D{0} *\cir<2pt>{} ="E",
	(50,0) *++!D{0} *\cir<2pt>{} ="F",
	(60,0) *++!D{0} *\cir<2pt>{} ="G",
	(40,-10) *++!L{0} *++!U{} *\cir<2pt>{} ="H",
	\ar@{-} "A";"B"
	\ar@{-} "B";"C"
	\ar@{-} "C";"D"
	\ar@{-} "D";"E"
	\ar@{-} "E";"F"
	\ar@{-} "F";"G"
	\ar@{-} "E";"H"
\end{xy} & Yes \\ \hline
	$\mathfrak{e}_{8(-24)}$ & \begin{xy}
	*++!D{1} *\cir<2pt>{}        ="A",
	(10,0) *++!D{0} *\cir<2pt>{} ="B",
	(20,0) *++!D{0} *\cir<2pt>{} ="C",
	(30,0) *++!D{0} *{\bullet} ="D",
	(40,0) *++!D{0} *{\bullet} ="E",
	(50,0) *++!D{0} *{\bullet} ="F",
	(60,0) *++!D{0} *\cir<2pt>{} ="G",
	(40,-10) *++!L{0} *++!U{} *{\bullet} ="H",
	\ar@{-} "A";"B"
	\ar@{-} "B";"C"
	\ar@{-} "C";"D"
	\ar@{-} "D";"E"
	\ar@{-} "E";"F"
	\ar@{-} "F";"G"
	\ar@{-} "E";"H"
\end{xy} & Yes \\ \hline
	$\mathfrak{f}_{4(4)}$ & \begin{xy}
	*++!D{1} *\cir<2pt>{}        ="A",
	(10,0) *++!D{0} *\cir<2pt>{} ="B",
	(20,0) *++!D{0} *\cir<2pt>{} ="C",
	(30,0) *++!D{0} *\cir<2pt>{} ="D",
	\ar@{-} "A";"B"
	\ar@{=>} "B";"C"
	\ar@{-} "C";"D"
\end{xy} & Yes \\ \hline
	$\mathfrak{f}_{4(-20)}$ & \begin{xy}
	*++!D{1} *{\bullet}        ="A",
	(10,0) *++!D{0} *{\bullet} ="B",
	(20,0) *++!D{0} *{\bullet} ="C",
	(30,0) *++!D{0} *\cir<2pt>{} ="D",
	\ar@{-} "A";"B"
	\ar@{=>} "B";"C"
	\ar@{-} "C";"D"
\end{xy} & No \\ \hline	
	$\mathfrak{g}_{2(2)}$ & \begin{xy}
	*++!D{1} *\cir<2pt>{}        ="A",
	(10,0) *++!D{0} *\cir<2pt>{} ="B",
	\ar@3{->} "A";"B"
\end{xy} & Yes \\ 
\hline \hline 
\label{table:Satake_diagrams}
\end{longtable}
\end{example}

\subsection{Proof of Proposition \ref{prop:min_min}}\label{subsection:proofProp}

We consider the same setting on Section \ref{subsection:Satake_diagrams} and suppose that $\mathfrak{g}_\mathbb{C}$ is simple and $\mathfrak{g}$ is not compact.
In Proposition \ref{prop:min_min}, 
the equivalence between \eqref{item:min_min:min_meets_g} and \eqref{item:min_min:WDD_match} is obtained by Fact \ref{thm:nilp_match}.
In this subsection, we completes a proof of Proposition \ref{prop:min_min}
by proving the equivalence among \eqref{item:min_min:min_meets_g}, \eqref{item:min_min:one_dim_highest}, \eqref{item:min_min:real} and \eqref{item:min_min:WDD_extended}.

\begin{proof}[Proof of the equivalence between $\eqref{item:min_min:one_dim_highest}$ and $\eqref{item:min_min:real}$ in Proposition $\ref{prop:min_min}$]
Recall that 
\[
\dim_\mathbb{R} \mathfrak{g}_{\lambda} = \sharp \{\, \alpha \in \Delta(\mathfrak{g}_\mathbb{C},\mathfrak{h}_\mathbb{C}) \mid \alpha|_\mathfrak{a} = \lambda \,\}.
\]
If $\phi$ is a real root, then for each root $\alpha \in \Delta(\mathfrak{g}_\mathbb{C},\mathfrak{h}_\mathbb{C})$ except for $\phi$, 
we have $\alpha|_\mathfrak{a} \neq \lambda \ (= \phi|_\mathfrak{a})$ since $\phi$ is the longest root of $\Delta(\mathfrak{g}_\mathbb{C},\mathfrak{h}_\mathbb{C})$. Thus $\dim_\mathbb{R} \mathfrak{g}_{\lambda} =1$ in this case.
Conversely, we assume that $\phi$ is not a real root.
Let us denote by $\tau$ the anti $\mathbb{C}$-linear involution corresponding to $\mathfrak{g}_\mathbb{C} = \mathfrak{g} + \sqrt{-1}\mathfrak{g}$.
That is, $\tau$ is the complex conjugation of $\mathfrak{g}_\mathbb{C}$ 
with respect to its real form $\mathfrak{g}$.
Then $\tau$ induces the involution $\tau^*$ on $\mathfrak{h}^*$, 
and it preserves $\Delta(\mathfrak{g}_\mathbb{C},\mathfrak{h}_\mathbb{C})$.
Since $\phi|_{\sqrt{-1}\mathfrak{t}} \neq 0$, 
we obtain that $\tau^* \phi \neq \phi$ and 
$(\tau^* \phi)|_\mathfrak{a} = \phi|_\mathfrak{a} = \lambda$.
Hence, $\dim_\mathbb{R} \mathfrak{g}_{\lambda} \geq 2$.
\end{proof}

\begin{proof}[Proof of the equivalence between $\eqref{item:min_min:min_meets_g}$ and $\eqref{item:min_min:real}$ in Proposition $\ref{prop:min_min}$]
Recall that $H_{\phi} \in \mathfrak{h}$ 
is the hyperbolic element corresponding to $\minC$ 
(see Section \ref{subsection:Hyperbolic_orbits} for the notation).
Thus by Fact \ref{thm:nilp_match},
$\minC$ meets $\mathfrak{g}$ 
if and only if $H_{\phi}$ is in $\mathfrak{a}$.
By the definition of $H_{\phi}$, 
the highest root $\phi$ is real if and only if $H_{\phi}$ is in $\mathfrak{a}$.
This completes the proof.
\end{proof}

\begin{proof}[Proof of the equivalence between $\eqref{item:min_min:WDD_match}$ and $\eqref{item:min_min:WDD_extended}$ in Proposition $\ref{prop:min_min}$]
In the case where 
$\mathfrak{g}_\mathbb{C} \simeq \mathfrak{sl}(2,\mathbb{C})$, 
our non-compact real form $\mathfrak{g}$ must be isomorphic to $\mathfrak{sl}(2,\mathbb{R})$.
Then our claim holds since the Satake diagram of $\mathfrak{sl}(2,\mathbb{C})$ has no black node and matches any weighted Dynkin diagram.
Let us consider the cases where $\rank \mathfrak{g}_\mathbb{C} \geq 2$. 
In these case, as we observed in the last of Section \ref{subsection:WDD_of_nilp}, for a simple root $\alpha \in \Pi$, 
the weight on $\alpha$ for the weighted Dynkin diagram of $\minC$ is $1$ [resp.~$0$] if $\alpha$ has some edges [resp.~no edge] connected to the node $-\phi$ in the extended Dynkin diagram.
Then we only need to show that for a pair $\alpha, \beta \in \Pi$ joined by an arrow on the Satake diagram of $\mathfrak{g}$, the node $\alpha$ has some edges connected to $-\phi$ if and only if $\beta$ has some edges connected to $-\phi$.
By \cite[Lemma 2.10]{Helminck88}, 
there exists an involution $\sigma^*$ of $\mathfrak{h}^*$ such that $\sigma^* \Delta(\mathfrak{g}_\mathbb{C},\mathfrak{h}_\mathbb{C}) = \Delta(\mathfrak{g}_\mathbb{C},\mathfrak{h}_\mathbb{C})$, 
$\sigma^* \Pi = \Pi$ and $\sigma^* \alpha = \beta$.
Note that $\sigma^* \phi = \phi$
since $\phi$ is the unique longest dominant root in $\Delta^+(\mathfrak{g}_\mathbb{C},\mathfrak{h}_\mathbb{C})$.
Therefore, we have 
\begin{align*}
\langle \alpha, -\phi \rangle 
= \langle \alpha, -\sigma^* \phi \rangle 
= \langle \sigma^* \alpha, -\phi \rangle 
= \langle \beta, -\phi \rangle.
\end{align*}
This completes the proof.
\end{proof}

\section{Weighted Dynkin diagrams of $\ming$}\label{section:WDD_of_min}

Let $\mathfrak{g}_\mathbb{C}$ be a complex simple Lie algebra 
and $\mathfrak{g}$ a non-compact real form of $\mathfrak{g}_\mathbb{C}$.
In this section, we determine $\ming$ for each $\mathfrak{g}$ 
by describing the weighted Dynkin diagram of $\ming$.
Recall that Proposition \ref{prop:min_min} claims that $\minC = \ming$ if and only if $\dim_\mathbb{R} \mathfrak{g}_\lambda =1$.
Thus our concern is in the cases where $\dim_\mathbb{R} \mathfrak{g}_\lambda \geq 2$ 
i.e.~$\mathfrak{g}$ is isomorphic to one of $\mathfrak{su}^*(2k)$, $\mathfrak{so}(n,1)$, $\mathfrak{sp}(p,q)$, $\mathfrak{e}_{6(-26)}$ or $\mathfrak{f}_{4(-20)}$.

We use the same notation in Section \ref{subsection:Satake_diagrams} 
and assume that $\mathfrak{g}_\mathbb{C}$ is simple and $\mathfrak{g}$ is non-compact.
Let us denote by \[
\mathfrak{a}_+ := \{\, A \in \mathfrak{a} \mid \xi(A) \geq 0 \ 
\text{for any } \xi \in \Sigma^+(\mathfrak{g},\mathfrak{a}) \,\}.
\]
Then $\mathfrak{a}_+$ is a fundamental domain of $\mathfrak{a}$ for the action of $W(\mathfrak{g},\mathfrak{a})$.
Since 
\[
\Sigma^+(\mathfrak{g},\mathfrak{a}) = \{\, \alpha|_\mathfrak{a} \mid \alpha \in \Delta(\mathfrak{g}_\mathbb{C},\mathfrak{h}_\mathbb{C}) \,\} \setminus \{0\},
\]
we have $\mathfrak{a}_+ = \mathfrak{h}_+ \cap \mathfrak{a}$.

Recall that $\lambda$ is dominant by Lemma \ref{lem:restricted_highest_root}.
Thus the coroot $A_{\lambda}$ is in $\mathfrak{a}_+ (\subset \mathfrak{h}_+)$.
Therefore, $A_{\lambda}$ is the hyperbolic element corresponding to $\ming$ since we can find $X_\lambda \in \mathfrak{g}_\lambda$, $Y_\lambda \in \mathfrak{g}_{-\lambda}$ such that the triple $(A_{\lambda},X_\lambda,Y_\lambda)$ is an $\mathfrak{sl}_2$-triple in $\mathfrak{g}_\mathbb{C}$.
Therefore, to determine the weighted Dynkin diagram of $\ming$, we need to compute the weighted Dynkin diagram corresponding to $A_{\lambda}$.

Our first purpose of this section 
is to show the following proposition 
which gives a formula of $A_{\lambda}$ by $H_{\phi}$,
where $H_\phi$ is the hyperbolic element corresponding to 
$\minC$ (see Section \ref{subsection:Hyperbolic_orbits}).

\begin{prop}\label{prop:A_lambda_and_H_phi}
We denote by $\tau$ the anti $\mathbb{C}$-linear involution 
corresponding to $\mathfrak{g}_\mathbb{C} = \mathfrak{g} + \sqrt{-1}\mathfrak{g}$, 
i.e.~$\tau$ is the complex conjugation of $\mathfrak{g}_\mathbb{C}$ 
with respect to the real form $\mathfrak{g}$.
Then 
\begin{align*}
A_{\lambda} = 
\begin{cases}
H_{\phi} \quad \text{if $\dim_\mathbb{R} \mathfrak{g}_\lambda = 1$}, \\
H_{\phi} + \tau H_{\phi} \quad \text{if $\dim_\mathbb{R} \mathfrak{g}_\lambda \geq 2$}.
\end{cases}
\end{align*}
\end{prop}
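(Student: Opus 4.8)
Here is how I would go about it.

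The plan is to identify both $A_\lambda$ and $H_\phi$ as the unique neutral elements in $\mathfrak{h}_+$ of the nilpotent orbits $\ming$ and $\minC$ (this is already recorded in the text preceding the statement), and then to write the required $\mathfrak{sl}_2$-triples down by hand. Write $\tau^*$ for the involution of $\mathfrak{h}^*$ dual to $\tau|_{\mathfrak{h}}$. First I would collect the elementary facts that follow from $\tau$ acting by $+1$ on $\mathfrak{a}$ and $-1$ on $\sqrt{-1}\mathfrak{t}$ in $\mathfrak{h}=\sqrt{-1}\mathfrak{t}\oplus\mathfrak{a}$: the map $\tau^*$ fixes $\mathfrak{a}^*$ pointwise, is an isometry preserving $\Delta(\mathfrak{g}_\C,\mathfrak{h}_\C)$, sends $(\mathfrak{g}_\C)_\alpha$ to $(\mathfrak{g}_\C)_{\tau^*\alpha}$ and the coroot $H_\phi$ to $\tau H_\phi=H_{\tau^*\phi}$, and preserves $\Delta^+\setminus\Delta_0$ (positivity of a root not vanishing on $\mathfrak{a}$ is read off its $\mathfrak{a}$-part); moreover, since the ordering on $\mathfrak{h}$ extends that on $\mathfrak{a}$ one has $\phi|_\mathfrak{a}=\lambda$, and since restriction to $\mathfrak{a}$ is the orthogonal projection $\nu\mapsto\tfrac12(\nu+\tau^*\nu)$ this gives $\lambda=\tfrac12(\phi+\tau^*\phi)$. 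By Proposition \ref{prop:min_min}, the case $\dim_\R\mathfrak{g}_\lambda\ge2$ is exactly the case $\tau^*\phi\ne\phi$, and there $\tau^*\phi$ is a positive root different from $\phi$ with $(\tau^*\phi)|_\mathfrak{a}=\lambda$; the crux will then be the identity $\langle\phi,\tau^*\phi\rangle=0$, equivalently $\|\phi\|^2=2\|\lambda\|^2$.

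Granting that identity, the rest is formal. Pick a standard $\mathfrak{sl}_2$-triple $(H_\phi,E_\phi,E_{-\phi})$ with $E_{\pm\phi}\in(\mathfrak{g}_\C)_{\pm\phi}$; applying the automorphism $\tau$ yields the standard triple $(\tau H_\phi,\tau E_\phi,\tau E_{-\phi})$ for $\pm\tau^*\phi$. Since $\phi$ is the highest root, $\phi+\tau^*\phi$ is not a root; because $\langle\phi,\tau^*\phi\rangle=0$ the $\tau^*\phi$-root string through $\phi$ is symmetric, so $\phi-\tau^*\phi$ is not a root either; and $(\tau^*\phi)(H_\phi)=0$. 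Hence the two $\mathfrak{sl}_2$-subalgebras commute, and
\[
\bigl(\,H_\phi+\tau H_\phi,\ E_\phi+\tau E_\phi,\ E_{-\phi}+\tau E_{-\phi}\,\bigr)
\]
is an $\mathfrak{sl}_2$-triple. It is $\tau$-fixed, hence lies in $\mathfrak{g}$; its nilpositive part is nonzero and lies in $\mathfrak{g}_\lambda$ (each summand has $\mathfrak{a}$-weight $\lambda$), hence in $\ming$; and $H_\phi+\tau H_\phi\in\mathfrak{a}$ corresponds under $\mathfrak{h}\simeq\mathfrak{h}^*$ to $4\lambda/\|\phi\|^2$, a positive multiple of the dominant root $\lambda$ (Lemma \ref{lem:restricted_highest_root}), so it lies in $\mathfrak{a}_+\subset\mathfrak{h}_+$. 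By uniqueness of the neutral element of $\ming$ in $\mathfrak{h}_+$, and since $A_\lambda$ is such a neutral element, $A_\lambda=H_\phi+\tau H_\phi$. When $\dim_\R\mathfrak{g}_\lambda=1$ the same scheme degenerates nicely: $\phi$ is the only root restricting to $\lambda$, so any $0\ne X_\lambda\in\mathfrak{g}_\lambda\subset(\mathfrak{g}_\C)_\phi$ is a highest root vector lying in $\minC=\ming$, it sits in an $\mathfrak{sl}_2$-triple in $\mathfrak{g}$ with neutral element $A_\lambda$ (Lemma \ref{lem:sl_2_in_g}) and in one with neutral element $H_\phi$ in $\mathfrak{g}_\C$, and uniqueness gives $A_\lambda=H_\phi$ (here $\tau^*\phi=\phi$, so the stated formula reads $A_\lambda=H_\phi$).

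The hard part will be establishing $\langle\phi,\tau^*\phi\rangle=0$ when $\tau^*\phi\ne\phi$. One reduces it as follows: $\phi$ is dominant and $\tau^*\phi$ is positive, so $\langle\phi,\tau^*\phi\rangle\ge0$; as $\phi$ and $\tau^*\phi$ are distinct longest roots of the reduced system $\Delta(\mathfrak{g}_\C,\mathfrak{h}_\C)$, the integer $\langle\phi,(\tau^*\phi)^\vee\rangle$ is $0$ or $1$, and it is $1$ precisely when $\phi-\tau^*\phi$ is a root — a root which would then be long, positive, and vanish on $\mathfrak{a}$ (its simple-root coefficients are nonnegative, since the highest root dominates coefficientwise, and they are supported on $\Pi_0$). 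So the task is to exclude this configuration. Since no purely combinatorial obstruction in an abstract root system seems to forbid it, I would finish by running through the five Lie algebras in Proposition \ref{prop:min_min} \eqref{item:min_min:classification}, using the explicit Satake diagram in each case to compute $\tau^*\phi$ (equivalently, to read off $\|\lambda\|^2=\tfrac12\|\phi\|^2$ from the known restricted root systems) and thereby verify $\langle\phi,\tau^*\phi\rangle=0$.
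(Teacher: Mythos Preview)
Your argument is correct, but it diverges from the paper's proof in two places worth noting.

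First, the paper does not construct an $\mathfrak{sl}_2$-triple at all. It simply checks by direct computation on $\mathfrak{a}^*$ that
\[
A_\lambda \;=\; \frac{\langle\phi,\phi\rangle}{2\langle\lambda,\lambda\rangle}\,(H_\phi+\tau H_\phi),
\]
using only that $\xi(\tau H_\phi)=\xi(H_\phi)$ for $\xi\in\mathfrak{a}^*$ and that $\phi|_\mathfrak{a}=\lambda$ (Lemma~5.2). This holds in both cases and reduces the proposition to determining the scalar $\langle\phi,\phi\rangle/2\langle\lambda,\lambda\rangle$. Your route via an explicit commuting pair of $\mathfrak{sl}_2$-triples and uniqueness of the neutral element in $\mathfrak{h}_+$ is more constructive and perhaps more illuminating geometrically, but it only gets off the ground \emph{after} you know $\langle\phi,\tau^*\phi\rangle=0$, whereas the paper's identity is available unconditionally.

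Second, and more importantly, your proposed case-by-case verification of $\langle\phi,\tau^*\phi\rangle=0$ is unnecessary. The paper disposes of this uniformly by invoking Araki's result \cite[Proposition~1.3]{Araki62} that $\tau^*$ is a \emph{normal} involution of $\Delta(\mathfrak{g}_\C,\mathfrak{h}_\C)$: for every root $\alpha$, the element $\alpha-\tau^*\alpha$ is never a root. Applied to $\alpha=\phi$ with $\tau^*\phi\ne\phi$, this gives $\langle\phi,\tau^*\phi\rangle\le0$, and combined with your observation $\langle\phi,\tau^*\phi\rangle\ge0$ (dominance of $\phi$, positivity of $\tau^*\phi$) one gets orthogonality at once. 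This is exactly the ``purely combinatorial obstruction'' you said you could not see; it lives not in the abstract root system but in the extra structure coming from the real form. So you can drop the case analysis entirely.
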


In particular, 
if $\dim_\mathbb{R} \mathfrak{g}_\lambda \geq 2$, 
then the weighted Dynkin diagram of $\ming$ 
can be computed by the sum of the weighted Dynkin diagrams corresponding to 
$H_{\phi}$, i.e.~the weighted Dynkin diagram of $\minC$, 
and that corresponding to $\tau H_{\phi}$.

We compute the weighted Dynkin diagram corresponding to $A_{\lambda}$ 
for each $\mathfrak{g}$ with $\dim_\mathbb{R} \mathfrak{g}_\lambda \geq 2$ 
in Section \ref{subsection:WDD_of_O_min_g}.

\subsection{Proof of Proposition \ref{prop:A_lambda_and_H_phi}}

Recall that Proposition \ref{prop:min_min} claims that 
$\dim_\mathbb{R} \mathfrak{g}_\lambda = 1$ if and only if 
the highest root $\phi$ of $\Delta(\mathfrak{g}_\mathbb{C},\mathfrak{h}_\mathbb{C})$ is real,
i.e.~$\phi|_{\sqrt{-1}\mathfrak{t}} = 0$.
We give a proof of Proposition \ref{prop:A_lambda_and_H_phi} 
as a sequence of the following two lemmas:

\begin{lem}\label{lem:culc_of_A_lambda}
\[
A_{\lambda} = \frac{\langle \phi, \phi \rangle}{2 \langle \lambda, \lambda \rangle} 
(H_{\phi} + \tau H_{\phi}),
\]
where $\langle~,~\rangle$ is the inner product on $\mathfrak{h}^*$ and on $\mathfrak{a}^*$
induced by the Killing form $B_\C$ on $\mathfrak{g}_\C$.
In particular, if $\phi$ is a real root, then $A_{\lambda} = H_{\phi}$.
\end{lem}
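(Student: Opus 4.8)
The plan is to prove Lemma \ref{lem:culc_of_A_lambda} by exploiting the defining property of coroots together with the relation between the inner products on $\mathfrak{h}^*$ and on $\mathfrak{a}^*$ under restriction. The key observation is that $H_\phi + \tau H_\phi$, while a priori an element of $\mathfrak{h}$, actually lies in $\mathfrak{a}$: indeed $\tau$ acts as $-1$ on $\mathfrak{a}$ and as $+1$ on $\sqrt{-1}\mathfrak{t}$ on the relevant real form $\mathfrak{h}=\sqrt{-1}\mathfrak{t}+\mathfrak{a}$, so $H_\phi+\tau H_\phi$ is twice the $\mathfrak{a}$-component of $H_\phi$ (here I am using that $\tau$ preserves $\mathfrak{h}$, which follows from $\tau$ preserving $\Delta(\mathfrak{g}_\mathbb{C},\mathfrak{h}_\mathbb{C})$ as noted in the proof of Proposition \ref{prop:min_min}). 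Hence the candidate $\tfrac{\langle\phi,\phi\rangle}{2\langle\lambda,\lambda\rangle}(H_\phi+\tau H_\phi)$ genuinely lies in $\mathfrak{a}$, so it makes sense to compare it with $A_\lambda$.

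\emph{First} I would set up the identification: under the Killing form, $\mathfrak{h}^*$ is identified with $\mathfrak{h}$, and restriction $\mathfrak{h}^*\to\mathfrak{a}^*$ is dual to the inclusion $\mathfrak{a}\hookrightarrow\mathfrak{h}$; the induced inner product on $\mathfrak{a}^*$ is then exactly the one obtained by restricting $B_\mathbb{C}$, as stated in the lemma. \emph{Second}, I would compute, for an arbitrary $A\in\mathfrak{a}$, the pairing $\lambda(A)$. Writing $\lambda=\phi|_\mathfrak{a}$ and using the self-duality, $\lambda(A)=\langle\phi|_\mathfrak{a},A^\flat\rangle$ where $A^\flat\in\mathfrak{a}^*$ corresponds to $A$; on the other hand $\phi(A)=\langle\phi,A^\flat\rangle$ computed in $\mathfrak{h}^*$, and these agree because $A\in\mathfrak{a}$ kills the $\sqrt{-1}\mathfrak{t}$-part. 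So $\lambda(A)=\phi(A)$ for all $A\in\mathfrak{a}$. \emph{Third}, I would verify the defining equation of the coroot: $A_\lambda\in\mathfrak{a}$ is characterized by $\xi(A_\lambda)=\tfrac{2\langle\xi,\lambda\rangle}{\langle\lambda,\lambda\rangle}$ for all $\xi\in\Sigma(\mathfrak{g},\mathfrak{a})$, equivalently (since $\Sigma(\mathfrak{g},\mathfrak{a})$ spans $\mathfrak{a}^*$) by $\mu(A_\lambda)=\tfrac{2\langle\mu,\lambda\rangle}{\langle\lambda,\lambda\rangle}$ for all $\mu\in\mathfrak{a}^*$.

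\emph{Then} I would just check that $H:=\tfrac{\langle\phi,\phi\rangle}{2\langle\lambda,\lambda\rangle}(H_\phi+\tau H_\phi)$ satisfies this: for $\mu\in\mathfrak{a}^*$, pick any $\alpha\in\Delta(\mathfrak{g}_\mathbb{C},\mathfrak{h}_\mathbb{C})$ with $\alpha|_\mathfrak{a}=\mu$ (or more simply work with $\mathfrak{a}^*$ directly), and compute $\mu(H)=\tfrac{\langle\phi,\phi\rangle}{2\langle\lambda,\lambda\rangle}\big(\mu(H_\phi)+\mu(\tau H_\phi)\big)$. Since $H_\phi$ is the coroot of $\phi$, $\alpha(H_\phi)=\tfrac{2\langle\alpha,\phi\rangle}{\langle\phi,\phi\rangle}$; and $\alpha(\tau H_\phi)=(\tau^*\alpha)(H_\phi)\cdot(\text{sign})$—here I must be careful about the anti-linearity, but on the real span $\mathfrak{h}$ the map $\tau$ is an honest linear involution, so $\alpha(\tau H_\phi)=(\tau^{-1*}\alpha)(H_\phi)=(\tau^*\alpha)(H_\phi)=\tfrac{2\langle\tau^*\alpha,\phi\rangle}{\langle\phi,\phi\rangle}=\tfrac{2\langle\alpha,\tau^*\phi\rangle}{\langle\phi,\phi\rangle}$, using that $\tau^*$ is an isometry of $\mathfrak{h}^*$. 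For a root $\alpha$ with $\mu=\alpha|_\mathfrak{a}$, adding these gives $\tfrac{2}{\langle\phi,\phi\rangle}\langle\alpha+\tau^*\alpha,\,\phi\rangle$ or $\tfrac{2}{\langle\phi,\phi\rangle}\langle\alpha,\phi+\tau^*\phi\rangle$; one then observes $\langle\alpha,\phi+\tau^*\phi\rangle=\langle\mu,2\phi|_\mathfrak{a}\rangle_{\mathfrak{a}^*}=2\langle\mu,\lambda\rangle$ because $\phi+\tau^*\phi$ is $\tau^*$-fixed hence supported on $\mathfrak{a}^*$, and the restriction map is a partial isometry onto that subspace. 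Multiplying through by the scalar $\tfrac{\langle\phi,\phi\rangle}{2\langle\lambda,\lambda\rangle}$ yields exactly $\tfrac{2\langle\mu,\lambda\rangle}{\langle\lambda,\lambda\rangle}$, so $H=A_\lambda$ by uniqueness of the coroot. Finally, if $\phi$ is real then $\tau^*\phi=\phi$, so $\langle\lambda,\lambda\rangle=\langle\phi,\phi\rangle$ and $\tau H_\phi=H_\phi$, giving $A_\lambda=H_\phi$.

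\textbf{Main obstacle.} I expect the delicate point to be bookkeeping the compatibility of the two inner products under restriction—precisely, that $\langle\alpha|_\mathfrak{a},\beta|_\mathfrak{a}\rangle_{\mathfrak{a}^*}$ equals $\langle\alpha,\beta\rangle_{\mathfrak{h}^*}$ when at least one of $\alpha,\beta$ is $\tau^*$-fixed, together with the sign/conjugation issues in passing from the $\mathbb{C}$-antilinear $\tau$ on $\mathfrak{g}_\mathbb{C}$ to its linear action on the real forms $\mathfrak{h}$ and $\mathfrak{h}^*$. Once the identification $\tau|_\mathfrak{h}$ is an orthogonal involution with $(+1)$-eigenspace $\sqrt{-1}\mathfrak{t}$ and $(-1)$-eigenspace $\mathfrak{a}$ is pinned down (and dually on $\mathfrak{h}^*$), everything else is a short direct computation; the subsequent lemma (identifying $\tfrac{\langle\phi,\phi\rangle}{2\langle\lambda,\lambda\rangle}$ with $1$, i.e.\ $\lambda$ being long, in all five relevant cases) will presumably be handled case-by-case and is a separate matter.
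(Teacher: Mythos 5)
Your proof is correct and is essentially the paper's own argument: both verify that the candidate element satisfies the defining property of the coroot $A_{\lambda}$ by evaluating each $\xi \in \mathfrak{a}^*$ on it, using $\xi(H_{\phi}) = \xi(\tau H_{\phi})$, the coroot property of $H_{\phi}$, and the compatibility $\langle \xi, \phi \rangle = \langle \xi, \lambda \rangle$ of the inner products under restriction (the paper's proof is exactly this three-line computation). One slip to fix: since $\tau$ is conjugation with respect to $\mathfrak{g}$, it fixes $\mathfrak{a} \subset \mathfrak{p} \subset \mathfrak{g}$ pointwise and acts by $-1$ on $\sqrt{-1}\mathfrak{t}$ --- you state the eigenspaces the other way around, which taken literally would put $H_{\phi} + \tau H_{\phi}$ in $\sqrt{-1}\mathfrak{t}$ and break the claim; the remainder of your computation (e.g.\ that $\tau^*$-fixed covectors are supported on $\mathfrak{a}^*$, and that $\tau H_{\phi} = H_{\phi}$ when $\phi$ is real) already uses the correct convention.
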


\begin{lem}\label{lem:norm_of_phi}
Suppose that $\phi$ is not a real root.
Then $\langle \phi, \phi \rangle = 2 \langle \lambda,\lambda \rangle$.
\end{lem}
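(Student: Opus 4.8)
The plan is to recast the identity as an orthogonality relation and then eliminate the single competing possibility by an integrality argument on the neutral element $A_\lambda$.

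Since $\mathfrak h=\sqrt{-1}\mathfrak t\oplus\mathfrak a$ is orthogonal for the form induced by $B_\C$, so is $\mathfrak h^\ast=(\sqrt{-1}\mathfrak t)^\ast\oplus\mathfrak a^\ast$; as $\tau$ acts by $+1$ on $\mathfrak a$ and by $-1$ on $\sqrt{-1}\mathfrak t$, the involution $\tau^\ast$ on $\mathfrak h^\ast$ equals $2\,\mathrm{pr}_{\mathfrak a^\ast}-\mathrm{id}$, so, writing $\lambda$ also for $\phi|_{\mathfrak a}\in\mathfrak a^\ast\subset\mathfrak h^\ast$, we get $\tau^\ast\phi=2\lambda-\phi$. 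Using $\langle\phi,\lambda\rangle=\langle\lambda,\lambda\rangle$ one finds $\langle\phi,\phi\rangle+\langle\phi,\tau^\ast\phi\rangle=2\langle\lambda,\lambda\rangle$, so the claim $\langle\phi,\phi\rangle=2\langle\lambda,\lambda\rangle$ is equivalent to $\langle\phi,\tau^\ast\phi\rangle=0$, i.e. to $\phi\perp\tau^\ast\phi$. Note also that $\tau^\ast\phi$ is again a longest root of $\Delta(\mathfrak g_\C,\mathfrak h_\C)$ ($\tau^\ast$ is an isometry) and that $\tau^\ast\phi\ne\pm\phi$: since $\phi$ is not real, $\phi|_{\sqrt{-1}\mathfrak t}\ne0$ gives $\tau^\ast\phi\ne\phi$, and $\tau^\ast\phi=-\phi$ would force $\lambda=0$.

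Next I would show $\langle\phi,\tau^\ast\phi\rangle\ge0$: if it were negative, $\phi+\tau^\ast\phi=2\lambda$ would be a root whose restriction to $\mathfrak a$ is $2\lambda$, but $2\lambda\notin\Sigma(\mathfrak g,\mathfrak a)$ since $2\lambda>\lambda$ would contradict Lemma~\ref{lem:restricted_highest_root}. As $\phi$ and $\tau^\ast\phi$ are distinct roots of the same length, the Cartan integer $2\langle\phi,\tau^\ast\phi\rangle/\langle\phi,\phi\rangle$ lies in $\{0,\pm1\}$, hence $\langle\phi,\tau^\ast\phi\rangle\in\{0,\tfrac12\langle\phi,\phi\rangle\}$; ruling out the value $\tfrac12\langle\phi,\phi\rangle$ is the crux. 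Assume it for contradiction. Then $\langle\lambda,\lambda\rangle=\tfrac34\langle\phi,\phi\rangle$, so Lemma~\ref{lem:culc_of_A_lambda} gives $A_\lambda=\tfrac23(H_\phi+\tau H_\phi)$. Since $\tau H_\phi$ is the coroot of $\tau^\ast\phi$, both $\alpha(H_\phi)$ and $\alpha(\tau H_\phi)$ are integers for every $\alpha\in\Delta(\mathfrak g_\C,\mathfrak h_\C)$, so $\alpha(A_\lambda)\in\tfrac23\Z$; but by Lemma~\ref{lem:sl_2_in_g} the element $A_\lambda$ is the neutral element of an $\mathfrak{sl}_2$-triple in $\mathfrak g_\C$, so $\ad A_\lambda$ has integral eigenvalues and $\alpha(A_\lambda)\in\Z$. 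Combining, $\alpha(A_\lambda)\in2\Z$ for every root $\alpha$, and in particular $\alpha(A_\lambda)\in\{0,1,2\}\cap2\Z=\{0,2\}$ for every $\alpha\in\Pi$ by Dynkin's theorem (Section~\ref{subsection:WDD_of_nilp}).

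To finish, use $\operatorname{rank}\mathfrak g_\C\ge2$ (so $\phi\notin\Pi$) and split into two cases. If $\tau^\ast\phi\notin\Pi$, then every $\alpha\in\Pi$ is distinct from $\pm\phi$ and $\pm\tau^\ast\phi$, hence $\alpha(H_\phi)\in\{0,1\}$ (the weighted Dynkin diagram of $\minC$ is $\{0,1\}$-valued for $\operatorname{rank}\mathfrak g_\C\ge2$, Section~\ref{subsection:WDD_of_nilp}) and $(\tau^\ast\alpha)(H_\phi)\in\{0,\pm1\}$ ($\tau^\ast\alpha$ is a root $\ne\pm\phi$), so $\alpha(A_\lambda)=\tfrac23\bigl(\alpha(H_\phi)+(\tau^\ast\alpha)(H_\phi)\bigr)\in\tfrac23\{-1,0,1,2\}$; the only member of this set lying in $\{0,2\}$ is $0$, so $\alpha(A_\lambda)=0$ for all $\alpha\in\Pi$, whence $A_\lambda=0$, contradicting $\lambda(A_\lambda)=2$. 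If instead $\tau^\ast\phi\in\Pi$, then $(\tau^\ast\phi)|_{\mathfrak a}=\phi|_{\mathfrak a}=\lambda$ is at once a simple root and the highest root of the irreducible system $\Sigma(\mathfrak g,\mathfrak a)$, forcing it to be of type $A_1$; by Proposition~\ref{prop:min_min} the only such $\mathfrak g$ with $\phi$ not real is $\mathfrak{so}(n,1)$ (including its low-rank coincidences $\mathfrak{su}^\ast(4)\cong\mathfrak{so}(5,1)$, $\mathfrak{sp}(1,1)\cong\mathfrak{so}(4,1)$), where $\mathfrak g_\C=\mathfrak{so}(n+1,\C)$, $\phi=e_1+e_2$, $\lambda=e_1$, so that $\phi-\tau^\ast\phi=2(\phi-\lambda)=2e_2$ is not a root — contradicting that $\langle\phi,\tau^\ast\phi\rangle>0$ forces $\phi-\tau^\ast\phi\in\Delta$. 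Hence $\langle\phi,\tau^\ast\phi\rangle=0$, which is the lemma. The genuinely delicate point is this last exclusion; I expect that finding a uniform structural reason for it — one that avoids the appeal to the classification in the $A_1$ subcase — will be the main difficulty, while the reduction carried out in the first two paragraphs is routine.
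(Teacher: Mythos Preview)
Your argument is correct, but the paper's proof is essentially two lines and avoids all of your case analysis. The structural fact you are missing is a result of Araki \cite[Proposition~1.3]{Araki62}: the involution $\tau^*$ on $\Delta(\mathfrak g_\C,\mathfrak h_\C)$ is \emph{normal}, meaning $\alpha-\tau^*\alpha$ is never a root for any $\alpha\in\Delta$. Since $\phi\ne\tau^*\phi$, this gives $\langle\phi,\tau^*\phi\rangle\le 0$ at once (a positive inner product would force $\phi-\tau^*\phi\in\Delta$). For the reverse inequality the paper simply notes that $\tau^*\phi$ is positive (its restriction to $\mathfrak a$ is $\lambda>0$ and the ordering on $\mathfrak h$ extends that on $\mathfrak a$) while $\phi$ is dominant, so $\langle\phi,\tau^*\phi\rangle\ge 0$.

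Your route, by contrast, reduces to the same orthogonality statement but then attacks the inequality $\langle\phi,\tau^*\phi\rangle\le 0$ indirectly: a Cartan--integer argument leaves the single value $\tfrac12\langle\phi,\phi\rangle$ to exclude, and you eliminate it via integrality of $\ad A_\lambda$, Dynkin's $\{0,1,2\}$ bound, and finally the classification of real forms with restricted system $A_1$. All of this is sound, but it invokes considerably more machinery (Lemma~\ref{lem:culc_of_A_lambda}, $\mathfrak{sl}_2$--theory, Section~\ref{subsection:WDD_of_nilp}, Proposition~\ref{prop:min_min}) than is needed. It is also worth noticing that your final contradiction in the $A_1$ subcase --- that $\phi-\tau^*\phi=2e_2$ fails to be a root of $\mathfrak{so}(n+1,\C)$ --- is exactly an instance of Araki's normality; the uniform structural reason you suspect should exist is precisely that theorem, and invoking it from the outset collapses your whole argument to the paper's.
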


\begin{proof}[Proof of Lemma $\ref{lem:culc_of_A_lambda}$]
We consider $\mathfrak{h}^*$ as $\mathfrak{a}^* + \sqrt{-1}\mathfrak{t}^*$.
Then for each $\xi \in \mathfrak{a}^*$, 
\begin{align*}
\xi (\frac{\langle \phi, \phi\rangle}{2 \langle \lambda, \lambda \rangle} (H_{\phi} + \tau H_{\phi})) 
	&= \frac{\langle \phi, \phi\rangle}{\langle \lambda, \lambda \rangle} \xi(H_{\phi}) \quad (\text{since } \xi(H_{\phi}) = \xi(\tau H_{\phi})) \\
	&= \frac{2\langle \xi,\phi \rangle}{\langle \lambda, \lambda \rangle} \quad (\text{by the definition of $H_{\phi}$})\\
	&= \frac{2\langle \xi,\lambda \rangle}{\langle \lambda, \lambda \rangle} \quad (\text{since } \phi|_\mathfrak{a} = \lambda) \\
	&= \xi(A_\lambda).
\end{align*}
This completes the proof.
\end{proof}

\begin{proof}[Proof of Lemma $\ref{lem:norm_of_phi}$]
We write $\tau^*$ for the involution on $\mathfrak{h}^*$ induced by $\tau$.
It is enough to show that 
$
\langle \phi, \tau^* \phi \rangle = 0
$
because $\lambda = \frac{1}{2}(\phi + \tau^* \phi)$.
By \cite[Proposition 1.3]{Araki62}, 
$\tau^*$ is a normal involution of $\Delta(\mathfrak{g}_\mathbb{C},\mathfrak{h}_\mathbb{C})$,
i.e.~for each root $\alpha \in \Delta(\mathfrak{g}_\mathbb{C},\mathfrak{h}_\mathbb{C})$, 
the element $\alpha-\tau^* \alpha$ is not a root of $\Delta(\mathfrak{g}_\mathbb{C},\mathfrak{h}_\mathbb{C})$.
In particular, 
for any root $\alpha \in \Delta(\mathfrak{g}_\mathbb{C},\mathfrak{h}_\mathbb{C})$ 
with $\tau^* \alpha \neq \alpha$, we have 
$
\langle \alpha,\tau^* \alpha \rangle \leq 0.
$
Recall that we are assuming that $\phi$ is not real. 
Thus $\phi \neq \tau^* \phi$, and then 
$
\langle \phi,\tau^* \phi \rangle \leq 0.
$
The root $\tau^* \phi$ is in $\Delta^+(\mathfrak{g}_\mathbb{C},\mathfrak{h}_\mathbb{C})$ 
since the ordering on $\mathfrak{h}$ is an extension of the ordering on $\mathfrak{a}$.
Then we also obtain that 
$
\langle \phi,\tau^* \phi \rangle \geq 0
$
since the highest root $\phi$ is dominant.
Therefore, 
$
\langle \phi, \tau^* \phi \rangle = 0.
$
\end{proof}

\subsection{Weighted Dynkin diagrams of $\ming$}\label{subsection:WDD_of_O_min_g}

We now determine the weighted Dynkin diagram of $\ming$ 
for each $\mathfrak{g}$ with $\dim_\mathbb{R} \mathfrak{g}_\lambda \geq 2$, 
i.e.~$\mathfrak{g}$ is isomorphic to one of $\mathfrak{su}^*(2k)$, $\mathfrak{so}(n,1)$, $\mathfrak{sp}(p,q)$, $\mathfrak{e}_{6(-26)}$ or $\mathfrak{f}_{4(-20)}$.
By Proposition \ref{prop:A_lambda_and_H_phi}, 
our goal is to compute 
the weighted Dynkin diagram corresponding to $A_{\lambda} = H_{\phi} + \tau H_{\phi}$.

For simplicity, we denote by $S$ the Satake diagram of $\mathfrak{g}$.
For each simple root $\alpha$ in $\Pi$, 
we denote by $H_{\alpha} \in \mathfrak{h}$ the coroot of $\alpha$.

Then the next lemma holds:

\begin{lem}\label{lem:basis_of_it}
The set 
\[
\{\, H_{\alpha} \mid \text{$\alpha$ is black in $S$} \,\} \sqcup
\{\, H_{\alpha} - H_{\beta} \mid \text{$\alpha$ and $\beta$ are joined by an arrow in $S$} \,\}
\]
becomes a basis of $\sqrt{-1}\mathfrak{t}$.
\end{lem}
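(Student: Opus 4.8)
The plan is to identify $\sqrt{-1}\mathfrak{t}$ as the $(-1)$-eigenspace of $\tau$ restricted to $\mathfrak{h}$ (equivalently, of the involution $\sigma$ on $\mathfrak{h}$ that is the identity on $\sqrt{-1}\mathfrak{t}$ and $-1$ on $\mathfrak{a}$, up to sign conventions), and then count dimensions. Recall $\mathfrak{h} = \sqrt{-1}\mathfrak{t} \oplus \mathfrak{a}$, so $\dim_\mathbb{R} \sqrt{-1}\mathfrak{t} = \dim_\mathbb{R}\mathfrak{h} - \dim_\mathbb{R}\mathfrak{a} = |\Pi| - |\overline{\Pi}|$. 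On the other hand, $\overline{\Pi} = \{\alpha|_\mathfrak{a} \mid \alpha \in \Pi\} \setminus \{0\}$ is obtained from $\Pi$ by deleting the black nodes $\Pi_0$ and then identifying nodes joined by an arrow; hence $|\overline{\Pi}| = |\Pi| - |\Pi_0| - (\text{number of arrows in } S)$, and therefore $\dim_\mathbb{R}\sqrt{-1}\mathfrak{t} = |\Pi_0| + (\text{number of arrows})$, which is exactly the cardinality of the proposed set. So it suffices to prove that the listed vectors lie in $\sqrt{-1}\mathfrak{t}$ and are linearly independent.

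For membership: a vector $H \in \mathfrak{h}$ lies in $\sqrt{-1}\mathfrak{t}$ iff $\xi(H) = 0$ for every $\xi \in \mathfrak{a}^*$, equivalently iff $\alpha|_\mathfrak{a}(H) = 0$ for all $\alpha \in \Pi$ (since $\overline\Pi$ together with the dual pairing determines $\mathfrak{a}^*$). If $\alpha \in \Pi_0$ then $\alpha|_\mathfrak{a} = 0$, so the coroot $H_\alpha$, being dual to the span of such restrictions\,--\,more precisely, one checks $\beta|_\mathfrak a(H_\alpha) = \langle\beta,\alpha^\vee\rangle$ must be treated via the normal-form structure of the Satake diagram: for a black $\alpha$, the $\tau^*$-action fixes the subsystem generated by $\Pi_0$ and one uses that $\tau H_\alpha = H_{\tau^*\alpha} = H_\alpha$ for $\alpha \in \Pi_0$, while $\tau H_\alpha = H_{\tau^*\alpha} = H_\beta$ when $\alpha,\beta$ are joined by an arrow (because an arrow records precisely that $\tau^*\alpha \equiv \beta \pmod{\mathbb{Z}\Pi_0}$ and in fact, after the standard adjustment, $\tau^* \alpha$ and $\beta$ have the same coroot up to a combination of black coroots). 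I would invoke the description of $\tau^*$ on $\Pi$ from \cite{Araki62} (a normal $\sigma$-fundamental system): $\tau^*$ permutes $\Pi \setminus \Pi_0$ by the arrow involution, modulo $\mathbb{Z}\Pi_0$. From this, $\tau(H_\alpha) = H_\alpha$ for black $\alpha$ and $\tau(H_\alpha - H_\beta) = -(H_\alpha - H_\beta)$ for an arrow pair, so all listed vectors are $\tau$-fixed in the appropriate sense\,--\,wait, I need $\tau$-eigenvalue placing them in $\sqrt{-1}\mathfrak t$: since $\sqrt{-1}\mathfrak t$ is the set of $H\in\mathfrak h$ on which every restricted root vanishes, and both $H_\alpha$ ($\alpha$ black) and $H_\alpha - H_\beta$ (arrow pair) are killed by every $\alpha|_\mathfrak a$, they lie in $\sqrt{-1}\mathfrak t$; this last vanishing is the concrete computation using $\langle\,,\,\rangle$ and the fact that restriction to $\mathfrak a$ of a root depends only on its class mod the span of $\Pi_0$ and the arrow identifications.

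For linear independence: the coroots $\{H_\alpha \mid \alpha \in \Pi\}$ form a basis of $\mathfrak{h}$ (they are the basis dual to the fundamental weights, or simply a basis since $\Pi$ is). The black coroots $\{H_\alpha : \alpha \in \Pi_0\}$ are part of this basis, hence independent; the vectors $H_\alpha - H_\beta$ for distinct arrow pairs involve disjoint sets of basis vectors from $\Pi \setminus \Pi_0$, and none of those basis vectors appears among the black ones, so the whole family is linearly independent. Combined with the dimension count above, the family is a basis of $\sqrt{-1}\mathfrak{t}$.

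The main obstacle I anticipate is the membership claim\,--\,specifically, verifying cleanly that $H_\alpha - H_\beta$ (rather than some $\mathbb{Z}\Pi_0$-corrected combination) is exactly annihilated by all restricted roots. This hinges on the precise normalization in Araki's theory of the Satake diagram: an arrow between $\alpha$ and $\beta$ means $\alpha|_\mathfrak a = \beta|_\mathfrak a$, which gives $(\alpha - \beta)|_\mathfrak a = 0$, and one must pass from this to the statement about coroots. If the root system is simply laced along the relevant nodes this is immediate ($H_\alpha - H_\beta$ corresponds to $\alpha - \beta$ under the Killing-form identification up to scalar); in the non-simply-laced situation (e.g. $\mathfrak{f}_{4(-20)}$) arrows only connect roots of equal length, so the same conclusion holds. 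I would handle this uniformly by noting arrows join roots of equal length and invoking \cite[Proposition 1.3]{Araki62} on the normal structure, rather than case-checking.
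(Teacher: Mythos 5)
Your proof is correct and follows essentially the same route as the paper's: count that the proposed set has cardinality $\sharp\Pi-\sharp\overline{\Pi}=\dim_{\mathbb{R}}\sqrt{-1}\mathfrak{t}$, get linear independence from the disjointness of supports in the coroot basis (which, like your count of $\overline{\Pi}$, rests on the fact that no three nodes share a nonzero restriction), and verify membership in $\sqrt{-1}\mathfrak{t}=\mathfrak{a}^{\perp}$ via the Killing-form computation $B_{\mathbb{C}}(H_{\alpha},A)=2\alpha(A)/\langle\alpha,\alpha\rangle$ together with the equal-length property of arrow pairs (the paper derives this from Helminck's diagram involution $\sigma^{*}$ rather than citing Araki directly). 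The only blemish is the discarded detour through $\tau$-eigenvalues, where the sign is off (for a black node one has $\tau H_{\alpha}=-H_{\alpha}$, which is what places it in the $(-1)$-eigenspace $\sqrt{-1}\mathfrak{t}$, not $\tau H_{\alpha}=H_{\alpha}$); since you replace that line with the direct vanishing computation, the argument stands.
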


\begin{proof}
We denote by 
\begin{multline*}
\Omega = \{\, H_{\alpha} \mid \text{$\alpha$ is black in $S$} \,\}  \\ \sqcup \{\, H_{\alpha} - H_{\beta} \mid \text{$\alpha$ and $\beta$ are joined by an arrow in $S$} \,\}.
\end{multline*}
It is known that there is no triple $\{ \alpha,\beta,\gamma \}$ in $\Pi \setminus \Pi_0$ such that $\alpha|_\mathfrak{a} = \beta|_\mathfrak{a} = \gamma|_\mathfrak{a}$ 
(this fact can be found in \cite[Section 2.8]{Araki62}).
Thus $\Omega$ is linearly independent and \[
\sharp \Omega = \sharp \Pi - \sharp \overline{\Pi}.
\]
Recall that $\overline{\Pi}$ is a simple system of the restricted root system $\Sigma(\mathfrak{g},\mathfrak{a})$, 
we have $\dim_\mathbb{R} \mathfrak{a} = \sharp \overline{\Pi}$.
Since $\dim_\mathbb{R} \mathfrak{h} = \sharp \Pi$ and $\sqrt{-1}\mathfrak{t}$ is the orthogonal complement space of $\mathfrak{a}$ in $\mathfrak{h}$ for the Killing form $B_\mathbb{C}$ on $\mathfrak{g}_\mathbb{C}$, 
it remains to prove that 
\[
B_\mathbb{C}(H',A) = 0 \quad \text{for any } H' \in \Omega,\ A \in \mathfrak{a}.
\]
Let us take $\alpha \in \Pi_0$, i.e.~$\alpha$ is a black node in $S$. 
Since $\alpha|_\mathfrak{a} = 0$, we have 
\[
B_\mathbb{C}(H_{\alpha},A) = \frac{2 \alpha(A)}{\langle \alpha,\alpha \rangle} =0 \quad 
\text{for any } A \in \mathfrak{a}.
\]
Furthermore, by \cite[Lemma 2.10]{Helminck88}, 
there exists an involution $\sigma^*$ of $\mathfrak{h}^*$ such that 
$\sigma^* \alpha = \beta$ for all pair $\alpha, \beta \in \Pi \setminus \Pi_0$ 
such that $\alpha|_\mathfrak{a}= \beta|_\mathfrak{a}$, 
i.e.~$\alpha$ and $\beta$ is joined by an arrow in $S$.
In particular $|\alpha| = |\beta|$ for such pair.
Thus for any $A \in \mathfrak{a}$, we have 
\begin{align*}
B_\mathbb{C} (H_{\alpha} - H_{\beta}, A) 
	&= \frac{2\alpha(A)}{\langle \alpha, \alpha \rangle} - \frac{2\beta(A)}{\langle \beta, \beta \rangle} \\
	&= 0 \quad (\text{since } \alpha|_\mathfrak{a} = \beta|_\mathfrak{a} \text{ and } |\alpha| = |\beta|).
\end{align*}
This completes the proof.
\end{proof}

By using Lemma \ref{lem:basis_of_it}, 
we shall compute the weighted Dynkin diagram corresponding to $A_{\lambda} = H_{\phi} + \tau H_{\phi}$.
In this paper, we only give the computation 
for the case $\mathfrak{g} = \mathfrak{e}_{6(-26)}$ below.
For the other $\mathfrak{g}$ with $\dim_\mathbb{R} \mathfrak{g}_\lambda \geq 2$, 
we can compute the weighted Dynkin diagram corresponding to $A_{\lambda}$ by the same way.

\begin{example}
Let $(\mathfrak{g}_\mathbb{C},\mathfrak{g}) = (\mathfrak{e}_{6,\mathbb{C}}, \mathfrak{e}_{6(-26)})$.
We denote the Satake diagram of $\mathfrak{e}_{6(-26)}$ by 
\[
\begin{xy}
	*++!D{\alpha_1} *\cir<2pt>{}        ="A",
	(6,0) *++!D{\alpha_2} *{\bullet} ="B",
	(12,0) *++!D{\alpha_3}  *{\bullet} ="C",
	(18,0) *++!D{\alpha_4} *{\bullet} ="D",
	(24,0) *++!D{\alpha_5} *\cir<2pt>{} ="E",
	(12,-6) *++!L{\alpha_6} *{\bullet} ="F",
	\ar@{-} "A";"B"
	\ar@{-} "B";"C"
	\ar@{-} "C";"D"
	\ar@{-} "D";"E"
	\ar@{-} "C";"F"
\end{xy}.
\]
By Table $\ref{table:Satake_diagrams}$, the weighted Dynkin diagram corresponding to $H_{\phi}$ is 
\[
\begin{xy}
	*++!D{0} *\cir<2pt>{}        ="A",
	(6,0) *++!D{0} *\cir<2pt>{} ="B",
	(12,0) *++!D{0}  *\cir<2pt>{} ="C",
	(18,0) *++!D{0} *\cir<2pt>{} ="D",
	(24,0) *++!D{0} *\cir<2pt>{} ="E",
	(12,-6) *++!L{1} *\cir<2pt>{} ="F",
	\ar@{-} "A";"B"
	\ar@{-} "B";"C"
	\ar@{-} "C";"D"
	\ar@{-} "D";"E"
	\ar@{-} "C";"F"
\end{xy}.
\]
We now compute the weighted Dynkin diagram corresponding to $A_{\lambda} = H_{\phi} + \tau H_{\phi}$.
By Fact $\ref{fact:a_to_Satake}$, 
the weighted Dynkin diagram corresponding to $A_{\lambda}$ matches the Satake diagram of $\mathfrak{e}_{6(-26)}$.
Thus we can put the weighted Dynkin diagram corresponding to $A_{\lambda}$ as
\[
\begin{xy}
	*++!D{a} *\cir<2pt>{}        ="A",
	(6,0) *++!D{0} *\cir<2pt>{} ="B",
	(12,0) *++!D{0}  *\cir<2pt>{} ="C",
	(18,0) *++!D{0} *\cir<2pt>{} ="D",
	(24,0) *++!D{b} *\cir<2pt>{} ="E",
	(12,-6) *++!L{0} *\cir<2pt>{} ="F",
	\ar@{-} "A";"B"
	\ar@{-} "B";"C"
	\ar@{-} "C";"D"
	\ar@{-} "D";"E"
	\ar@{-} "C";"F"
\end{xy} \quad \text{for } a,b \in \mathbb{R}.
\]
To determine $a,b \in \mathbb{R}$, we also put \[
H_{\phi}^{\imag} := H_{\phi} - \tau H_{\phi} \in \sqrt{-1}\mathfrak{t}.
\]
Since $A_{\lambda} + H_{\phi}^{\imag} = 2H_{\phi}$, 
the weighted Dynkin diagram corresponding to $H_{\phi}^{\imag}$ can be written by
\[
\begin{xy}
	*++!D{-a} *\cir<2pt>{}        ="A",
	(6,0) *++!D{0} *\cir<2pt>{} ="B",
	(12,0) *++!D{0}  *\cir<2pt>{} ="C",
	(18,0) *++!D{0} *\cir<2pt>{} ="D",
	(24,0) *++!D{-b} *\cir<2pt>{} ="E",
	(12,-6) *++!L{2} *\cir<2pt>{} ="F",
	\ar@{-} "A";"B"
	\ar@{-} "B";"C"
	\ar@{-} "C";"D"
	\ar@{-} "D";"E"
	\ar@{-} "C";"F"
\end{xy}.
\]
That is, 
we have 
\begin{align*}
&\alpha_1(H_{\phi}^{\imag}) = -a,\\
&\alpha_2(H_{\phi}^{\imag}) = \alpha_3(H_{\phi}^{\imag}) = \alpha_4(H_{\phi}^{\imag}) =0, \\
&\alpha_5(H_{\phi}^{\imag}) = -b, \\
&\alpha_6(H_{\phi}^{\imag}) = 2.
\end{align*}
By Lemma $\ref{lem:basis_of_it}$, the set $\{\, H_{\alpha_2},H_{\alpha_3},H_{\alpha_4},H_{\alpha_6} \,\}$ becomes a basis of $\sqrt{-1}\mathfrak{t}$.
Thus $H_{\phi}^\imag \in \sqrt{-1}\mathfrak{t}$ can be written by 
\[
H_{\phi}^\imag = c_2 H_{\alpha_2} + c_3 H_{\alpha_3} + c_4 H_{\alpha_4} + c_6 H_{\alpha_6} \quad \text{for } c_2,c_3,c_4,c_6 \in \mathbb{R}.
\]
Since 
$
\alpha_i(H_{\alpha_j}) = 2 \langle \alpha_i,\alpha_j \rangle/\langle \alpha_j,\alpha_j \rangle,
$
by comparing with the Dynkin diagram of $\mathfrak{e}_{6,\mathbb{C}}$, we obtain
\begin{align*}
\alpha_1(H_{\phi}^{\imag}) & = -c_2, \\
\alpha_2(H_{\phi}^{\imag}) & = 2c_2 -c_3,\\
\alpha_3(H_{\phi}^{\imag}) & = -c_2 + 2c_3 - c_4 - c_6,\\
\alpha_4(H_{\phi}^{\imag}) & = -c_3 + 2c_4,\\
\alpha_5(H_{\phi}^{\imag}) & = -c_4, \\
\alpha_6(H_{\phi}^{\imag}) & = -c_3 + 2c_6.
\end{align*}
Hence $a=b=1$.
Therefore, the weighted Dynkin diagram of $\ming$ for 
$\mathfrak{g} = \mathfrak{e}_{6(-26)}$ is 
\[
\begin{xy}
	*++!D{1} *\cir<2pt>{}        ="A",
	(6,0) *++!D{0} *\cir<2pt>{} ="B",
	(12,0) *++!D{0}  *\cir<2pt>{} ="C",
	(18,0) *++!D{0} *\cir<2pt>{} ="D",
	(24,0) *++!D{1} *\cir<2pt>{} ="E",
	(12,-6) *++!L{0} *\cir<2pt>{} ="F",
	\ar@{-} "A";"B"
	\ar@{-} "B";"C"
	\ar@{-} "C";"D"
	\ar@{-} "D";"E"
	\ar@{-} "C";"F"
\end{xy}.
\]
\end{example}

The result of our computations for all $\mathfrak{g}$ with $\dim_\mathbb{R} \mathfrak{g}_\lambda \geq 2$ 
is in Table \ref{table:O_min_g} in Section \ref{section:intro}.

\begin{rem}\label{rem:E6_Djokovic}
The weighted Dynkin diagram of $\ming$ for each $\mathfrak{g}$ with $\dim_\R \mathfrak{g}_\lambda \geq 2$ can be determined by the classification result of real nilpotent orbits in $\mathfrak{g}$.
For example, let us consider the case where $\mathfrak{g} = \mathfrak{e}_{6(-26)}$ as follows.
Djokovic \cite{Djokovic88E6} proved that there exist only two non-trivial real nilpotent orbits in $\mathfrak{e}_{6(-26)}$. 
The list of real nilpotent orbits in $\mathfrak{e}_{6(-26)}$ can be found in the table in \cite[Chapter 9.6]{Collingwood-McGovern93} and 
the weighted Dynkin diagram of the complexification of each orbit is described in the first column of the table.
Recall that the real dimension of a real nilpotent orbit and the complex dimension of its complexification are the same.
In a table in \cite[Chapter 8.4]{Collingwood-McGovern93},
the complex dimensions of the complex nilpotent orbits in $\mathfrak{e}_{6,\mathbb{C}}$ corresponding to the label $1$ and $2$ can be found as $32$ and $48$, respectively.
Therefore, $\mathfrak{e}_{6(-26)}$ has a two real nilpotent orbits $\Orbit_1$ and $\Orbit_2$ with $\dim_\mathbb{R} \Orbit_1 = 32$ and $\dim_\mathbb{R} \Orbit_2 = 48$, respectively.
In particular, $\mathfrak{e}_{6(-26)}$ has the unique real nilpotent orbit $\Orbit_1$ with the minimal positive dimension, 
and the weighted Dynkin diagram of its complexification is 
\[
\begin{xy}
	*++!D{1} *\cir<2pt>{}        ="A",
	(6,0) *++!D{0} *\cir<2pt>{} ="B",
	(12,0) *++!D{0}  *\cir<2pt>{} ="C",
	(18,0) *++!D{0} *\cir<2pt>{} ="D",
	(24,0) *++!D{1} *\cir<2pt>{} ="E",
	(12,-6) *++!L{0} *\cir<2pt>{} ="F",
	\ar@{-} "A";"B"
	\ar@{-} "B";"C"
	\ar@{-} "C";"D"
	\ar@{-} "D";"E"
	\ar@{-} "C";"F"
\end{xy}.
\]
Furthermore, 
by the Hasse diagram of complex nilpotent orbits in $\mathfrak{e}_{6,\mathbb{C}}$, which can be found in \cite[\S 13.4]{Carter93finite}
we can observe that 
the complexification of $\Orbit_1$ is contained in the closure of the complexification of $\Orbit_2$.
Thus the complexification of $\Orbit_1$ is minimal in $\mathcal{N}_{\mathfrak{e}_{6(-26)}}/{G_\mathbb{C}}$ except for the zero-orbit,
and hence, the complexification of $\Orbit_1$ is our $\ming$ in this case.
\end{rem}

\section{$G$-orbits in $\ming \cap \mathfrak{g}$}\label{section:real-orbits_in_min}

Let $\mathfrak{g}_\mathbb{C}$ be a complex simple Lie algebra and $\mathfrak{g}$ a non-compact real form of $\mathfrak{g}_\mathbb{C}$.
A proof of Thorem \ref{thm:number_of_G-orbits_in_minimal} is given in this section.

Throughout this section, 
we take $G$ for the connected linear Lie group with its Lie algebra $\mathfrak{g}$ 
and $G_\mathbb{C}$ the complexification of $G$.
We also fix a Cartan decomposition $\mathfrak{g} = \mathfrak{k} + \mathfrak{p}$ of $\mathfrak{g}$,
and write $K$ for the maximal compact subgroup of $G$ with its Lie algebra $\mathfrak{k}$. Note that $K$ is not connected in some cases. 
We take a maximal abelian subspace $\mathfrak{a}$ of $\mathfrak{p}$ and fix an ordering on $\mathfrak{a}$.
Let $\lambda$ be the highest root of $\Sigma(\mathfrak{g},\mathfrak{a})$ 
with respect to the ordering on $\mathfrak{a}$.
Let us denote by $M := Z_K(\mathfrak{a})$ and $A := \exp \mathfrak{a}$.
Then the closed subgroup $MA$ of $G$ coincides with $Z_G(\mathfrak{a})$.
Thus $MA$ acts on the highest root space $\mathfrak{g}_\lambda$ by the adjoint action.

Our purpose in this section is to show 
the following three propositions:

\begin{prop}\label{prop:MA-orbits_to_G-orbits}
The map
\begin{align*}
\{\, \text{non-zero $MA$-orbits in $\mathfrak{g}_\lambda$} \,\} &\rightarrow 
\{\, \text{$G$-orbits in $\ming \cap \mathfrak{g}$} \,\}
\\ 
\Orbit^{MA} &\mapsto G \cdot \Orbit^{MA}
\end{align*}
is bijective.
\end{prop}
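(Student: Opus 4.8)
The plan is to check that the stated map is well defined, surjective, and injective. \textbf{Well-definedness and surjectivity} are immediate: a non-zero $MA$-orbit is contained in $\mathfrak{g}_\lambda\setminus\{0\}$, which lies in $\ming\cap\mathfrak{g}$ by Theorem \ref{thm:O_min_g} (iii) together with $\mathfrak{g}_\lambda\subset\mathfrak{g}$, so $G\cdot\Orbit^{MA}$ is a well-defined $G$-orbit inside $\ming\cap\mathfrak{g}$; and every $G$-orbit in $\ming\cap\mathfrak{g}$ meets $\mathfrak{g}_\lambda\setminus\{0\}$ by Proposition \ref{prop:G-orbits_meets_highest}, hence is of this form.

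The real content is \textbf{injectivity}: if $X,X'\in\mathfrak{g}_\lambda\setminus\{0\}$ satisfy $X'=\Ad(g)X$ for some $g\in G$, then $X'=\Ad(p)X$ for some $p\in MA$. I would first pass to an Iwasawa decomposition $g=kan$ with respect to the chosen ordering on $\mathfrak{a}$, so that $N=\exp\!\big(\bigoplus_{\xi\in\Sigma^+(\mathfrak{g},\mathfrak{a})}\mathfrak{g}_\xi\big)$. Since $\lambda$ is the highest root, $[\mathfrak{g}_\xi,\mathfrak{g}_\lambda]\subset\mathfrak{g}_{\xi+\lambda}=0$ for every $\xi\in\Sigma^+(\mathfrak{g},\mathfrak{a})$, so $N$ acts trivially on $\mathfrak{g}_\lambda$; and $\mathfrak{a}$ acts on $\mathfrak{g}_\lambda$ by the character $\lambda$, so $\Ad(a)$ acts there as a positive scalar, every positive scalar being realized by some element of $A$. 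Hence $X'=c\,\Ad(k)X$ with $c>0$ and $k\in K$, and in particular $\Ad(k)X=c^{-1}X'\in\mathfrak{g}_\lambda$. Writing $\theta$ for the Cartan involution, $[X,\theta X]$ is a non-zero multiple of $A_\lambda$ whose coefficient depends only on the ($\Ad(K)$-invariant) norm of $X$; applying $\Ad(k)$ and using that it commutes with $\theta$ and preserves this norm gives $\Ad(k)A_\lambda=A_\lambda$, i.e.\ $k\in Z_K(A_\lambda)$.

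It then remains to show that $Z_K(A_\lambda)$ and its subgroup $M=Z_K(\mathfrak{a})$ have the same orbits on $\mathfrak{g}_\lambda$: granting this, pick $m\in M$ with $\Ad(m)X=\Ad(k)X$ and $a_0\in A$ acting on $\mathfrak{g}_\lambda$ by the scalar $c$, so that $X'=\Ad(a_0 m)X$ with $a_0 m\in MA$. For the orbit statement, put $\mathfrak{l}:=\mathfrak{z}_\mathfrak{g}(A_\lambda)$ and $L:=Z_G(A_\lambda)$, a $\theta$-stable reductive subgroup in which $\mathfrak{a}$ is still a maximal split abelian subspace, and decompose $\mathfrak{z}_\mathfrak{k}(A_\lambda)=\mathfrak{m}\oplus\mathfrak{k}'$, where $\mathfrak{k}'$ is spanned by the vectors $Z+\theta Z$ with $Z\in\mathfrak{g}_\xi$, $\xi(A_\lambda)=0$. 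Because $\lambda$ is the unique dominant longest root (Lemma \ref{lem:restricted_highest_root}), for such $\xi$ neither $\xi+\lambda$ nor $-\xi+\lambda$ is a root, so $\mathfrak{g}_{\pm\xi}$ — hence $\mathfrak{k}'$ — annihilates $\mathfrak{g}_\lambda$; moreover, since $A_\lambda$ is dominant, $W(\mathfrak{l},\mathfrak{a})$ equals the stabilizer of $A_\lambda$ in $W(\mathfrak{g},\mathfrak{a})$, fixes $\lambda$, and is realized by elements of $Z_K(A_\lambda)^\circ$ (each simple reflection coming from an $\mathfrak{sl}_2$ inside $\mathfrak{g}_\xi\oplus\mathfrak{g}_{-\xi}\subset\mathfrak{l}$). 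Combining these facts with the conjugacy of maximal abelian subspaces of $\mathfrak{z}_\mathfrak{p}(A_\lambda)$ under $Z_K(A_\lambda)^\circ$ yields $Z_K(A_\lambda)=M\cdot Z_K(A_\lambda)^\circ$; and since $Z_K(A_\lambda)^\circ$ and $M^\circ$ induce the same connected subgroup of $\mathrm{GL}(\mathfrak{g}_\lambda)$ (their Lie algebra images both equal the image of $\mathfrak{m}$, as $\mathfrak{k}'$ acts by $0$), the $Z_K(A_\lambda)$-orbit and the $M$-orbit of every point of $\mathfrak{g}_\lambda$ coincide.

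The step I expect to be the main obstacle is this last one — matching $Z_K(A_\lambda)$-orbits with $M$-orbits on $\mathfrak{g}_\lambda$ — since that is where the hypothesis that $\lambda$ is the highest root enters essentially (through $[\mathfrak{g}_{\pm\xi},\mathfrak{g}_\lambda]=0$ for $\xi$ orthogonal to $\lambda$, and through $W(\mathfrak{l},\mathfrak{a})$ fixing $\lambda$), and where one must keep careful track of the connected components of $Z_K(A_\lambda)$. The Iwasawa reduction, the $\theta$-computation, and the rescaling by $A$ should then be routine. (Alternatively, in place of the Iwasawa/$\theta$ argument one may use the $\mathfrak{sl}_2$-triples $(A_\lambda,X,Y)$, $(A_\lambda,X',Y')$ from Lemma \ref{lem:sl_2_in_g} together with the conjugacy of $\mathfrak{sl}_2$-triples sharing a nilpositive element to arrive at the same reduction to $Z_G(A_\lambda)$.)
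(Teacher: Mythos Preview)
Your argument is correct and follows a genuinely different path from the paper in the first reduction. The paper splits injectivity into two lemmas: first it uses $\mathfrak{sl}_2$-triples $(A_\lambda,X_\lambda,Y_\lambda)$ and $(A_\lambda,X'_\lambda,Y'_\lambda)$, Kostant's theorem on triples sharing a nilpositive element, and conjugacy of maximally split abelian subspaces in $Z_\mathfrak{g}(\mathfrak{l}')$ to move $g$ into $N_K(\mathfrak{a})\cdot A$; then it passes from $N_K(\mathfrak{a})$ to $M$ by writing the Weyl element $w$ fixing $\lambda$ as a product of reflections $s_{\xi_i}$ with $\xi_i\perp\lambda$ and checking that the explicit representatives $k_i=\exp\tfrac{\pi}{2}(X_i+\theta X_i)$ fix $X_\lambda$ (because $\lambda\pm\xi_i$ are not roots). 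Your Iwasawa step, together with the computation $[X,\theta X]\in\mathbb{R}A_\lambda$, reaches $Z_K(A_\lambda)$ directly and avoids Kostant's theorem; the paper's route is the alternative you mention in your last parenthetical. For the second step both approaches rest on the same two facts --- that the stabilizer of the dominant $A_\lambda$ in $W(\mathfrak{g},\mathfrak{a})$ is generated by reflections $s_\xi$ with $\xi\perp\lambda$, and that $\mathfrak{g}_{\pm\xi}$ annihilates $\mathfrak{g}_\lambda$ for such $\xi$ --- but you package them as a structural decomposition $Z_K(A_\lambda)=M\cdot Z_K(A_\lambda)^\circ$ plus the observation that $Z_K(A_\lambda)^\circ$ and $M_0$ have the same image in $\mathrm{GL}(\mathfrak{g}_\lambda)$, whereas the paper argues with explicit Weyl representatives. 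Your approach is slightly more conceptual and a bit shorter on the first half; the paper's is more explicit and makes the role of the individual reflections visible. The only places to be careful, as you note, are the component structure of $Z_K(A_\lambda)$ (handled by normality of the identity component) and the equality of the images in $\mathrm{GL}(\mathfrak{g}_\lambda)$ (which follows since both are compact, hence closed, connected subgroups with the same Lie algebra).
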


\begin{prop}\label{prop:MA-orbits_in_g_lambda_2}
Suppose that $\dim_\mathbb{R} \mathfrak{g}_\lambda \geq 2$.
Then $\mathfrak{g}_\lambda \setminus \{0\}$ becomes a single $MA$-orbit.
\end{prop}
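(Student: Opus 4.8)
The plan is to exploit the $\mathfrak{sl}_2$-theory together with the fact that $\ming = G_\C \cdot (\mathfrak{g}_\lambda \setminus \{0\})$ is a \emph{single} orbit. First I would recall from Lemma \ref{lem:sl_2_in_g} that every nonzero $X_\lambda \in \mathfrak{g}_\lambda$ fits into an $\mathfrak{sl}_2$-triple $(A_\lambda, X_\lambda, Y_\lambda)$ in $\mathfrak{g}$, where $A_\lambda \in \mathfrak{a}$ is the coroot of $\lambda$; thus $\mathfrak{g}_\lambda \setminus \{0\}$ consists of nilpotent elements lying in the single nilpotent orbit class of $\ming$ inside $\mathfrak{g}$. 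The real dimension of $\mathfrak{g}_\lambda$ equals $\sharp\{\alpha \in \Delta(\mathfrak{g}_\C,\mathfrak{h}_\C) : \alpha|_\mathfrak{a} = \lambda\}$, which by Proposition \ref{prop:min_min} is $\geq 2$ precisely in the five cases $\mathfrak{su}^*(2k), \mathfrak{so}(n,1), \mathfrak{sp}(p,q), \mathfrak{e}_{6(-26)}, \mathfrak{f}_{4(-20)}$. The key structural input is that $MA = Z_G(\mathfrak{a})$ acts on $\mathfrak{g}_\lambda$, and I want to show this action is transitive on the nonzero vectors.

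The main step is a dimension/connectedness count. On one hand, by Proposition \ref{prop:MA-orbits_to_G-orbits} the nonzero $MA$-orbits in $\mathfrak{g}_\lambda$ are in bijection with the $G$-orbits in $\ming \cap \mathfrak{g}$; on the other hand, I would argue that $\ming \cap \mathfrak{g}$ is a single $G$-orbit in exactly these five cases. Here is where the hypothesis $\dim_\R \mathfrak{g}_\lambda \geq 2$ enters: in all five cases $(\mathfrak{g},\mathfrak{k})$ is of non-Hermitian type, and one expects (this is essentially the content of Theorem \ref{thm:number_of_G-orbits_in_minimal}, whose non-Hermitian case should be established independently here) that the minimal real nilpotent orbit is unique. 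Concretely I would instead give a direct transitivity argument: fix $X_\lambda^0 \in \mathfrak{g}_\lambda \setminus \{0\}$ and consider the orbit map $MA \to \mathfrak{g}_\lambda$, $g \mapsto \Ad(g)X_\lambda^0$. Since $A$ acts on $\mathfrak{g}_\lambda$ by the single positive scalar character $e^{\lambda(\cdot)}$, it suffices to show $M$ acts transitively on the unit sphere of $\mathfrak{g}_\lambda$ for a suitable $M$-invariant inner product (e.g.\ the one from $-B_\C(\cdot, \theta \cdot)$). The centralizer $Z_M(X_\lambda^0)$ then has codimension $\dim_\R\mathfrak{g}_\lambda - 1$ in $M$, and one checks case by case — using the explicit description of $M = Z_K(\mathfrak{a})$ for each of the five algebras — that $M$ (which in these cases is $\mathrm{Sp}(1)^{\bullet}$-type, $\mathrm{SO}(n-1)$, $\mathrm{Sp}(1)\times \mathrm{Sp}(p+q-2)$-type, $\mathrm{Spin}(7)$, $\mathrm{Spin}(9)$ respectively, up to components and centers) acts transitively on the relevant sphere because $\mathfrak{g}_\lambda$ is, as an $M$-module, the standard representation $\R^{\dim\mathfrak{g}_\lambda}$ of an orthogonal/symplectic group acting transitively on its sphere.

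The hard part will be organizing this uniformly rather than by brute-force case analysis: the cleanest route is to identify $\mathfrak{g}_\lambda$ with $\mathbb{K}$ (where $\mathbb{K} = \mathbb{C}, \mathbb{H}, \mathbb{O}$ according to whether $\mathfrak{g}$ comes from a complex, quaternionic, or octonionic construction — giving $\dim_\R\mathfrak{g}_\lambda = 2, 4, 8$) and to observe that $M$ contains a subgroup acting as $\mathrm{U}(1), \mathrm{Sp}(1)$, or $\mathrm{Spin}(7)$ transitively on the unit sphere $S^1, S^3, S^7$ of $\mathbb{K}$; transitivity on the sphere plus the positive-scaling action of $A$ then yields transitivity on $\mathbb{K}\setminus\{0\} = \mathfrak{g}_\lambda\setminus\{0\}$. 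I would close by noting that combined with Proposition \ref{prop:MA-orbits_to_G-orbits} this immediately gives that $\ming \cap \mathfrak{g}$ is a single $G$-orbit in these cases, which is the non-Hermitian half of Theorem \ref{thm:number_of_G-orbits_in_minimal}.
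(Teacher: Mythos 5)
Your strategy --- reduce to transitivity of $M$ on the unit sphere of $\mathfrak{g}_\lambda$ (since $A$ acts by the positive scalars $e^{\lambda(H)}$ and $-B_\C(\cdot,\theta\cdot)$ is $M$-invariant), then verify this sphere-transitivity from the explicit structure of $M=Z_K(\mathfrak{a})$ in each of the five cases --- is genuinely different from the paper's and can be made to work, and you correctly avoid the circularity with Theorem \ref{thm:number_of_G-orbits_in_minimal}. But two caveats. First, your ``cleanest route'' via $\mathfrak{g}_\lambda\cong\mathbb{K}$ with $\dim_\R\mathfrak{g}_\lambda\in\{2,4,8\}$ is factually wrong in three of the five cases: the highest restricted root multiplicities are $4$ for $\mathfrak{su}^*(2k)$, $n-1$ for $\mathfrak{so}(n,1)$ (arbitrary, not a division-algebra dimension), $3$ for $\mathfrak{sp}(p,q)$ (the long root $2e_1$ has multiplicity $3$, i.e.\ $\mathfrak{g}_\lambda\cong\mathrm{Im}\,\mathbb{H}$ with $\mathrm{Sp}(1)$ acting through $\mathrm{SO}(3)$), $8$ for $\mathfrak{e}_{6(-26)}$, and $7$ for $\mathfrak{f}_{4(-20)}$ ($\mathfrak{g}_\lambda\cong\mathrm{Im}\,\mathbb{O}$, the vector representation of $\mathrm{Spin}(7)$); in particular the case $\dim_\R\mathfrak{g}_\lambda=2$ never occurs. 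The fallback case-by-case check does go through with the corrected $M$-module structures, but that verification is the entire content of the proposition and is not actually carried out in your plan.

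Second, the paper avoids all of this classification and case analysis. It first proves (Lemma \ref{lem:real_rank_one_single_MA_orbit}) that if $\mathfrak{g}$ has real rank one and $\dim_\R\mathfrak{g}_\lambda\geq 2$, then for any nonzero $X_\lambda$ the $\mathfrak{sl}_2$-triple $(A_\lambda,X_\lambda,Y_\lambda)$ and $\mathfrak{sl}(2,\C)$-representation theory give $[\mathfrak{m}\oplus\mathfrak{a},X_\lambda]=\mathfrak{g}_\lambda$, so every $M_0A$-orbit is open in $\mathfrak{g}_\lambda$; since $\mathfrak{g}_\lambda\setminus\{0\}$ is connected when $\dim_\R\mathfrak{g}_\lambda\geq 2$, it is a single $M_0A$-orbit. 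The general case is then reduced to rank one by showing that $\mathfrak{g}':=\mathfrak{g}_{-\lambda}\oplus[\mathfrak{g}_\lambda,\mathfrak{g}_{-\lambda}]\oplus\mathfrak{g}_\lambda$ is a $\theta$-stable real-rank-one simple subalgebra of $\mathfrak{g}$ whose highest root space is again $\mathfrak{g}_\lambda$. Note that the connectedness step is exactly where the hypothesis $\dim_\R\mathfrak{g}_\lambda\geq 2$ enters in the paper, whereas in your plan the hypothesis only enters implicitly through the classification; if you pursue your route you should make explicit why sphere-transitivity can fail when $\dim_\R\mathfrak{g}_\lambda=1$ (the sphere is then two points, and this is precisely the Hermitian/non-Hermitian dichotomy of Proposition \ref{prop:MA-orbits_in_g_lambda_1}).
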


\begin{prop}\label{prop:MA-orbits_in_g_lambda_1}
Suppose that $\dim_\mathbb{R} \mathfrak{g}_\lambda = 1$.
Then the following holds$:$
\begin{enumerate}
\item If $(\mathfrak{g},\mathfrak{k})$ is of non-Hermitian type,
then $\mathfrak{g}_\lambda \setminus \{0\}$ becomes a single $($disconnected$)$ $MA$-orbit.
\item If $(\mathfrak{g},\mathfrak{k})$ is of Hermitian type,
then $\mathfrak{g}_\lambda \setminus \{0\}$ split into two connected $MA$-orbits.
\end{enumerate}
\end{prop}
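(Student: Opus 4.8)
The plan is to analyze the $MA$-action on the one-dimensional space $\mathfrak{g}_\lambda$ by reducing it to a statement about a single $\mathfrak{sl}_2$-triple and the normalizer structure. Since $\dim_\mathbb{R}\mathfrak{g}_\lambda=1$, the action of $A=\exp\mathfrak{a}$ on $\mathfrak{g}_\lambda\setminus\{0\}$ is via the character $\exp(\lambda(\cdot))$, whose image is $\mathbb{R}_{>0}$; hence the $A$-orbits are the two open half-lines $\mathbb{R}_{>0}X_\lambda$ and $\mathbb{R}_{<0}X_\lambda$ for a fixed nonzero $X_\lambda\in\mathfrak{g}_\lambda$. So the whole question reduces to whether $M=Z_K(\mathfrak{a})$ acts on $\mathfrak{g}_\lambda\setminus\{0\}$ through $\pm 1$ (two half-lines fused into one $MA$-orbit) or trivially (two separate $MA$-orbits). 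Concretely, $\mathfrak{g}_\lambda\setminus\{0\}$ is a single $MA$-orbit if and only if there exists $m\in M$ with $\Ad(m)X_\lambda=-X_\lambda$, and splits into two $MA$-orbits otherwise.

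First I would produce an explicit candidate for such an $m$. Fix the $\mathfrak{sl}_2$-triple $(A_\lambda,X_\lambda,Y_\lambda)$ in $\mathfrak{g}$ given by Lemma \ref{lem:sl_2_in_g}, and set $m_\lambda:=\exp\bigl(\tfrac{\pi}{2}(X_\lambda-Y_\lambda)\bigr)$. Since $X_\lambda-Y_\lambda$ lies in $\mathfrak{k}$ (it is fixed by the Cartan involution up to the standard normalization of the triple, as $\theta X_\lambda=-Y_\lambda$), we have $m_\lambda\in K$. Inside the corresponding $\mathfrak{sl}(2,\mathbb{R})$-subalgebra, $\Ad(m_\lambda)$ is the rotation by $\pi$ in the $(A_\lambda,X_\lambda+Y_\lambda)$-plane, so $\Ad(m_\lambda)A_\lambda=-A_\lambda$ and $\Ad(m_\lambda)(X_\lambda-Y_\lambda)=X_\lambda-Y_\lambda$, which forces $\Ad(m_\lambda)X_\lambda=-Y_\lambda$ and $\Ad(m_\lambda)Y_\lambda=-X_\lambda$. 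Now $m_\lambda$ itself need not normalize $\mathfrak{a}$, but $\Ad(m_\lambda)$ acts on $\mathfrak{a}$ as the reflection $s_\lambda$ composed with $-1$ on the $\mathbb{R}A_\lambda$-line. The key point is to compose $m_\lambda$ with an element representing $s_\lambda\in W(\mathfrak{g},\mathfrak{a})$: choosing $w\in N_K(\mathfrak{a})$ with image $s_\lambda$, the product $n:=w\,m_\lambda$ (or $m_\lambda w$, arranged to cancel the $\mathfrak{a}$-action) centralizes $\mathfrak{a}$, hence lies in $M$, while still sending $X_\lambda$ to $\pm X_\lambda$. Tracking signs carefully through this composition — this is the main obstacle — determines whether we land on $+X_\lambda$ or $-X_\lambda$, and this sign is precisely governed by whether $-1\in W(\mathfrak{g},\mathfrak{a})$ acts on the $\lambda$-line by a longest Weyl element realized in $M$, i.e.\ by a Hermitian-vs-non-Hermitian dichotomy.

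The cleanest way to pin down the dichotomy is via the Cartan involution and the known characterization of Hermitian type. In the non-Hermitian case, the longest element $w_0$ of $W(\mathfrak{g},\mathfrak{a})$ acts as $-1$ and is realized by an element of $K$ normalizing $\mathfrak{a}$; combined with $m_\lambda$ as above one gets an $m\in M$ with $\Ad(m)X_\lambda=-X_\lambda$, so the two half-lines merge. In the Hermitian case, by contrast, $-\mathrm{id}$ is not in $W(\mathfrak g,\mathfrak a)$ acting appropriately — equivalently, the center of $K$ is one-dimensional and acts on $\mathfrak{g}_\lambda$ (a subspace of $\mathfrak{p}_\mathbb{C}\cap\mathfrak{g}$-type data) with a definite sign — so no element of $M$ can reverse $X_\lambda$, and the two half-lines stay distinct. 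I would make this rigorous by invoking the structure of $M$ (in particular that $M$ meets every component of $Z_{G_\mathbb{C}}(\mathfrak{a}_\mathbb{C})\cap K_\mathbb{C}$ relevant here) and the standard fact that $(\mathfrak{g},\mathfrak{k})$ is of Hermitian type iff $\mathfrak{z}(\mathfrak{k})\neq 0$, checking that in the Hermitian case a generator of $\mathfrak{z}(\mathfrak{k})$ acts on $\mathfrak{g}_\lambda$ by a nonzero scalar and hence no sign reversal is available in $M$. Finally, I would double-check connectivity: $A$ is connected, $M$ need not be, so in case (i) the resulting single $MA$-orbit is disconnected (the two half-lines lie in different components of $M\cdot\mathbb{R}_{>0}$), matching the statement, while in case (ii) each $MA$-orbit is one half-line, which is connected.
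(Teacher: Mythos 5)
Your opening reduction is exactly the paper's: since $\exp(tA_\lambda)$ scales $\mathfrak{g}_\lambda$ by $e^{2t}$, the group $A$ acts transitively on each of the two half-lines $\mathfrak{g}_\lambda^{\pm}$, and the whole proposition comes down to whether some $m\in M=Z_K(\mathfrak{a})$ reverses the sign on $\mathfrak{g}_\lambda$. The gap is in how you decide that question. Your construction $n=w\,m_\lambda$ with $m_\lambda=\exp\bigl(\tfrac{\pi}{2}(X_\lambda-Y_\lambda)\bigr)$ is circular: $m_\lambda$ itself lies in $N_K(\mathfrak{a})$ and represents $s_\lambda$, so one admissible choice is $w=m_\lambda$, which gives $n=m_\lambda^2$ and $nX_\lambda=+X_\lambda$; any other representative $w$ of $s_\lambda$ differs from $m_\lambda$ by an element of $M$, so ``tracking the sign'' of $nX_\lambda$ is literally equivalent to the question you are trying to answer, namely what $M$ does on $\mathfrak{g}_\lambda$. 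Worse, the criterion you then invoke to resolve the sign is not correct: it is false that $(\mathfrak{g},\mathfrak{k})$ non-Hermitian implies $w_0=-\mathrm{id}$ on $\mathfrak{a}$ (e.g.\ $\mathfrak{sl}(3,\mathbb{R})$ is non-Hermitian with restricted root system $A_2$, where $w_0\neq-\mathrm{id}$, yet part (i) of the proposition still holds for it), while $\mathfrak{sp}(n,\mathbb{R})$ is Hermitian with $-\mathrm{id}\in W(C_n)$; so the dichotomy ``$-\mathrm{id}\in W$'' has no correlation with Hermitian type and cannot govern the answer. Finally, in the Hermitian case your appeal to a generator $Z$ of $\mathfrak{z}(\mathfrak{k})$ does not work as stated, because $Z$ does not centralize $\mathfrak{a}$ (it defines the complex structure on $\mathfrak{p}$, so $[Z,\mathfrak{a}]\neq 0$), hence $Z\notin\mathfrak{m}$ and $\ad Z$ does not even preserve $\mathfrak{g}_\lambda$; it gives no constraint on the $M$-action there.

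The paper closes this gap with the structure theorem $M=FM_0$ (Knapp), where $F$ is generated by the explicit elements $\gamma_\xi=\exp\bigl(\pi\sqrt{-1}\,A_\xi\bigr)$ for $\xi\in\Sigma(\mathfrak{g},\mathfrak{a})$. Since $M_0$ is connected and $\dim_{\mathbb{R}}\mathfrak{g}_\lambda=1$, the sign question is decided entirely by $F$: the generator $\gamma_\xi$ acts on $\mathfrak{g}_\lambda$ by $(-1)^{2\langle\lambda,\xi\rangle/\langle\xi,\xi\rangle}$, so $M$ swaps $\mathfrak{g}_\lambda^{+}$ and $\mathfrak{g}_\lambda^{-}$ if and only if some $2\langle\lambda,\xi\rangle/\langle\xi,\xi\rangle$ is odd, which happens exactly when $\Sigma(\mathfrak{g},\mathfrak{a})$ is not of type $C$ or $BC$ (the highest root being long). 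The proof is then finished by the case-checked fact that, when $\dim_{\mathbb{R}}\mathfrak{g}_\lambda=1$, type $C$ or $BC$ is equivalent to $(\mathfrak{g},\mathfrak{k})$ being Hermitian. If you want to salvage your write-up, replace the $\mathfrak{sl}_2$/longest-element discussion by this parity computation on the generators of $F$; the rest of your reduction can stay.
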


By combining the classification \eqref{item:min_min:classification} in Proposition \ref{prop:min_min} with the list of simple Lie algebras of non-Hermitian type, 
we see that $\minC \neq \ming$ only if $(\mathfrak{g},\mathfrak{k})$ is of non-Hermitian type.
Therefore, Theorem \ref{thm:number_of_G-orbits_in_minimal} follows from above propositions immediately.

\subsection{Bijection between the set of non-zero $MA$-orbits in $\mathfrak{g}_\lambda$ and the set $G$-orbits in $\ming \cap \mathfrak{g}$}

We prove Proposition \ref{prop:MA-orbits_to_G-orbits} in this subsection.

By Theorem \ref{thm:O_min_g}, 
the orbit $\ming$ can be written by 
\[
\ming = G_\mathbb{C} \cdot (\mathfrak{g}_\lambda \setminus \{0\}),
\]
and by Proposition \ref{prop:G-orbits_meets_highest}, 
any $G$-orbit in $\ming \cap \mathfrak{g}$ meets $\mathfrak{g}_\lambda \setminus \{0\}$. 
Thus the map in Proposition \ref{prop:MA-orbits_to_G-orbits} is well-defined and surjective.
Therefore, the proof of Proposition \ref{prop:MA-orbits_to_G-orbits} is reduced to show that: 
For $X_\lambda, X_\lambda' \in \mathfrak{g}_\lambda$, if there exists $g \in G$ such that $g X_\lambda = X_\lambda'$, then there exists $m \in M$ and $a \in A$ such that $ma X_\lambda = X_\lambda'$.

We prove the claim above dividing into two lemmas below:

\begin{lem}\label{lem:G-conju_to_N_G(a)-conju}
For $X_\lambda \in \mathfrak{g}_\lambda \setminus \{0\}$ and $g \in G$,
if $g X_\lambda$ is also in $\mathfrak{g}_\lambda$, 
then there exists $m' \in N_K(\mathfrak{a})$ and $a \in A$ such that $m'a X_\lambda = g X_\lambda$.
\end{lem}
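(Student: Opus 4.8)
The plan is to strip $g$ down in three moves, exploiting the extremality of $\lambda$ each time: replace $g$ first by an element of $K$ (Iwasawa), then show that element automatically centralizes the coroot $A_\lambda$ of $\lambda$, and finally replace it by an element of $N_K(\mathfrak{a})$. For the first move I would use that $N$ acts trivially on $\mathfrak{g}_\lambda$: since $\lambda$ is the highest root of $\Sigma^+(\mathfrak{g},\mathfrak{a})$, Lemma~\ref{lem:Ker_of_n} gives $\lambda+\eta\notin\Sigma(\mathfrak{g},\mathfrak{a})$ for every $\eta\in\Sigma^+(\mathfrak{g},\mathfrak{a})$, so $[\mathfrak{n},\mathfrak{g}_\lambda]=0$. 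Writing $g=\kappa a n$ in the Iwasawa decomposition $G=KAN$, we get $gX_\lambda=e^{\lambda(\log a)}\,\kappa X_\lambda$, a positive scalar multiple of $\kappa X_\lambda$. Since $A$ normalizes $N_K(\mathfrak{a})$ and acts on $\mathfrak{g}_\lambda\setminus\{0\}$ exactly by positive scalars, it suffices to treat the case $g=\kappa\in K$; put $X'_\lambda:=\kappa X_\lambda\in\mathfrak{g}_\lambda\setminus\{0\}$.

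For the second move, let $\theta$ be the Cartan involution attached to $\mathfrak{g}=\mathfrak{k}\oplus\mathfrak{p}$ and $\langle\,\cdot\,,\,\cdot\,\rangle_\theta:=-B_\C(\,\cdot\,,\theta\,\cdot\,)$. A short computation with the Killing form — using that $[\mathfrak{g}_\lambda,\mathfrak{g}_{-\lambda}]\subseteq Z_\mathfrak{g}(\mathfrak{a})=\mathfrak{m}\oplus\mathfrak{a}$, that $\theta[X_\lambda,\theta X_\lambda]=-[X_\lambda,\theta X_\lambda]$ forces this bracket into $\mathfrak{a}$, and that $B_\C([X_\lambda,\theta X_\lambda],H)=\lambda(H)B_\C(X_\lambda,\theta X_\lambda)$ for $H\in\mathfrak{a}$ — shows $[X_\lambda,\theta X_\lambda]=c(X_\lambda)A_\lambda$ with $c(X_\lambda)=-\tfrac12\langle\lambda,\lambda\rangle\,\|X_\lambda\|_\theta^2\neq 0$ depending only on $\|X_\lambda\|_\theta$. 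Since $\kappa\in K$ commutes with $\theta$ and preserves $\langle\,\cdot\,,\,\cdot\,\rangle_\theta$, applying $\kappa$ to this identity gives $c(X_\lambda)\,\kappa A_\lambda=[X'_\lambda,\theta X'_\lambda]=c(X'_\lambda)A_\lambda=c(X_\lambda)A_\lambda$, hence $\kappa A_\lambda=A_\lambda$, i.e.\ $\kappa\in Z_K(A_\lambda)$.

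For the third move I would pass from $Z_K(A_\lambda)$ to $N_K(\mathfrak{a})$. The centralizer $\mathfrak{l}:=Z_\mathfrak{g}(A_\lambda)$ is $\theta$-stable and reductive with $\mathfrak{a}$ a maximal split abelian subspace; as $\kappa\mathfrak{a}$ is another one, conjugacy of maximal split abelian subspaces in $\mathfrak{l}$ gives $\kappa_1\in\exp(\mathfrak{l}\cap\mathfrak{k})\subseteq Z_K(A_\lambda)^\circ$ with $\kappa_1\kappa\in N_K(\mathfrak{a})$. To absorb $\kappa_1$, use the extremality of $\lambda$ a second time: if $\xi\in\Sigma(\mathfrak{g},\mathfrak{a})$ satisfies $\xi(A_\lambda)=0$, i.e.\ $\langle\xi,\lambda\rangle=0$, then $\lambda+\xi$ cannot be a root, for if $\lambda+\xi=\mu$ were one we would have $\langle\mu,\lambda\rangle=\langle\lambda,\lambda\rangle$, whence $\|\mu\|\ge\|\lambda\|$ by Cauchy--Schwarz, and as $\lambda$ is a longest root (Lemma~\ref{lem:restricted_highest_root}) equality forces $\mu=\lambda$, so $\xi=0$. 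Thus $[\mathfrak{g}_\xi,\mathfrak{g}_\lambda]=0$ for all such $\xi$, and since $\mathfrak{l}\cap\mathfrak{k}=\mathfrak{m}\oplus\big(\mathfrak{k}\cap\bigoplus_{\xi(A_\lambda)=0}\mathfrak{g}_\xi\big)$, the operators $\ad(\mathfrak{l}\cap\mathfrak{k})|_{\mathfrak{g}_\lambda}$ coincide with $\ad(\mathfrak{m})|_{\mathfrak{g}_\lambda}$. Hence the connected subgroups $\Ad(Z_K(A_\lambda)^\circ)|_{\mathfrak{g}_\lambda}$ and $\Ad(M^\circ)|_{\mathfrak{g}_\lambda}$ of $GL(\mathfrak{g}_\lambda)$ have the same Lie algebra and therefore coincide, so $\kappa_1|_{\mathfrak{g}_\lambda}=m_1|_{\mathfrak{g}_\lambda}$ for some $m_1\in M^\circ$. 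Then $m_1^{-1}\kappa_1\kappa\in N_K(\mathfrak{a})$ carries $X_\lambda$ to $X'_\lambda$, and unwinding the Iwasawa reduction of the first move produces the desired $m'\in N_K(\mathfrak{a})$ and $a\in A$.

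The first two moves are routine; the substance — and the step I expect to be the main obstacle — is the third, where one must show that the possibly larger and possibly disconnected centralizer $Z_K(A_\lambda)$ does not act on $\mathfrak{g}_\lambda$ with coarser orbits than $M$ does. This rests on the orthogonality observation above, and it also requires some care with component groups, namely choosing $\kappa_1$ to lie simultaneously in $N_K(\mathfrak{a})$ and in the identity component of $Z_K(A_\lambda)$.
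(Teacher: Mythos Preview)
Your argument is correct and reaches the goal, but by a genuinely different route from the paper. One small misstatement: $A$ does \emph{not} normalize $N_K(\mathfrak{a})$ in general (already in $SL(2,\mathbb{R})$ it fails). This is harmless, since the reduction you want is simply that a positive scalar commutes with any linear map, so $gX_\lambda=c\,\kappa X_\lambda=m'(cX_\lambda)=m'a''X_\lambda$ once you have $\kappa X_\lambda=m'X_\lambda$; no normalization is needed. Equivalently, $A$ is normal in $N_G(\mathfrak{a})$, which is the correct group-theoretic reason $N_K(\mathfrak{a})A$ is a group.

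The paper's proof avoids your Move~3 bookkeeping by working in a smaller centralizer. Instead of your Iwasawa/bracket trick, it invokes Kostant's conjugacy theorem for $\mathfrak{sl}_2$-triples sharing a nilpositive element to produce $g_2\in G$ with $g_2A_\lambda=A_\lambda$ and $g_2X_\lambda=X'_\lambda$. It then corrects $g_2$ inside $G_0$, the analytic subgroup of the centralizer $\mathfrak{g}_0:=Z_\mathfrak{g}(\mathfrak{l}')$ of the \emph{entire} triple $(A_\lambda,X'_\lambda,Y'_\lambda)$: there $\Ker\lambda$ and $g_2\Ker\lambda$ are both maximally split abelian, so some $g_3\in G_0$ conjugates one to the other, and $g_3$ fixes $X'_\lambda$ automatically because $G_0$ centralizes it. Your approach trades the appeal to Kostant for the elementary computation $[X_\lambda,\theta X_\lambda]\in\mathbb{R}A_\lambda$, at the cost of working in the larger $Z_\mathfrak{g}(A_\lambda)$ and then having to argue that the correcting element $\kappa_1$ acts on $\mathfrak{g}_\lambda$ through $M^\circ$. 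That last step uses the orthogonality observation ``$\xi\perp\lambda\Rightarrow\lambda\pm\xi\notin\Sigma$,'' which the paper does not need here but uses instead in the proof of the companion Lemma~\ref{lem:N_G-congugate_in_g_lambda}. So you have effectively front-loaded part of that lemma's content; the paper's route keeps the two lemmas more cleanly separated.
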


\begin{lem}\label{lem:N_G-congugate_in_g_lambda}
For $X_\lambda \in \mathfrak{g}_\lambda \setminus \{0\}$ and $m' \in N_K(\mathfrak{a})$,
if $m' X_{\lambda}$ is also in $\mathfrak{g}_{\lambda}$, 
then there exists $m \in M \ (= Z_K(\mathfrak{a}))$ such that $m X_\lambda = m' X_\lambda$.
\end{lem}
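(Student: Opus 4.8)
The plan is to reduce the statement to a computation inside the little Weyl group $W(\mathfrak{g},\mathfrak{a}) = N_K(\mathfrak{a})/M$, where $M = Z_K(\mathfrak{a})$, and then to produce, for the Weyl group element attached to $m'$, a representative in $N_K(\mathfrak{a})$ that acts trivially on $\mathfrak{g}_\lambda$. First I would recall the canonical surjection $N_K(\mathfrak{a}) \twoheadrightarrow W(\mathfrak{g},\mathfrak{a})$ with kernel $M$, and note that if $w$ denotes the image of $m'$, then $\Ad(m')$ carries each restricted root space $\mathfrak{g}_\xi$ onto $\mathfrak{g}_{w\xi}$. Since $0 \ne \Ad(m') X_\lambda \in \mathfrak{g}_\lambda \cap \mathfrak{g}_{w\lambda}$, we conclude $w\lambda = \lambda$, so $w$ lies in the isotropy subgroup $W_\lambda$ of $\lambda \in \mathfrak{a}^*$.

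The key step is the claim that there exists a representative $n \in N_K(\mathfrak{a})$ of $w$ with $\Ad(n)|_{\mathfrak{g}_\lambda} = \mathrm{id}$. Granting this, put $m := n^{-1} m'$; then $m \in M$ because $n$ and $m'$ have the same image $w$ in $W(\mathfrak{g},\mathfrak{a})$, and since $\Ad(m)$ preserves every restricted root space we get $\Ad(m) X_\lambda \in \mathfrak{g}_\lambda$, whence $\Ad(m') X_\lambda = \Ad(n)\Ad(m) X_\lambda = \Ad(m) X_\lambda$, the last equality by the defining property of $n$. This $m \in M$ is exactly what the lemma asks for.

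To prove the claim, I would invoke the classical fact that the isotropy group $W_\lambda$ is generated by the reflections $s_\beta$ with $\beta \in \Sigma(\mathfrak{g},\mathfrak{a})$ and $\langle \beta, \lambda \rangle = 0$. For such a $\beta$, replacing $\beta$ by $-\beta$ if necessary so that $\beta \in \Sigma^+(\mathfrak{g},\mathfrak{a})$, the element $\lambda + \beta$ cannot be a restricted root, since it would be strictly higher than the highest root $\lambda$; and because $\langle \lambda, \beta^\vee \rangle = 0$, the $\beta$-string through $\lambda$ is symmetric, so $\lambda - \beta$ is not a root either (and $\lambda - \beta \ne 0$, as $\langle\beta,\lambda\rangle = 0$ forces $\lambda \ne \beta$). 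Hence $[\mathfrak{g}_{\pm\beta}, \mathfrak{g}_\lambda] \subseteq \mathfrak{g}_{\lambda \pm \beta} = 0$, so $\mathfrak{g}_\beta \oplus \mathfrak{g}_{-\beta}$ centralizes $\mathfrak{g}_\lambda$; the same holds for $\mathfrak{g}_{\pm 2\beta}$ in the multipliable-root case, since $2\beta$ is also orthogonal to $\lambda$. Taking the standard representative $n_\beta := \exp\!\big(\tfrac{\pi}{2}(X_\beta + \theta X_\beta)\big) \in K$ of $s_\beta$ (with $X_\beta \in \mathfrak{g}_\beta$ suitably normalized and $\theta$ the Cartan involution), which lies in $\exp(\mathfrak{g}_\beta \oplus \mathfrak{g}_{-\beta})$, we see that $\Ad(n_\beta)$ fixes $\mathfrak{g}_\lambda$ pointwise. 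Writing $w = s_{\beta_1}\cdots s_{\beta_r}$ with each $\beta_i \perp \lambda$ and setting $n = n_{\beta_1}\cdots n_{\beta_r}$ yields the desired representative of $w$, finishing the claim.

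The main obstacle I expect is the last point: realizing each reflection $s_\beta$ (for $\beta$ a restricted root orthogonal to $\lambda$, possibly multipliable) by an element of $K$ that is \emph{supported} on $\mathfrak{g}_{\pm\beta}$ so as to act trivially on $\mathfrak{g}_\lambda$. This requires the standard but slightly delicate construction of Weyl-group representatives inside the $\mathfrak{sl}_2$-triple attached to $\beta$; once that normalization is in hand, everything else is routine bookkeeping with the surjection $N_K(\mathfrak{a}) \to W(\mathfrak{g},\mathfrak{a})$ and the invariance of root spaces under $M$.
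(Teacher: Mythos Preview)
Your proposal is correct and follows essentially the same route as the paper: identify the Weyl element $w$ of $m'$, show $w\lambda=\lambda$, factor $w$ into reflections $s_\beta$ with $\beta\perp\lambda$, and realize each $s_\beta$ by $n_\beta=\exp\tfrac{\pi}{2}(X_\beta+\theta X_\beta)$, which fixes $\mathfrak{g}_\lambda$ pointwise because $\lambda\pm\beta$ are not roots. The only cosmetic differences are that the paper proves the stabilizer-generation statement for $W_\lambda$ as a separate lemma rather than citing it, and argues that $\lambda\pm\beta\notin\Sigma$ via the fact that $\lambda$ is \emph{longest} (so $|\lambda\pm\beta|^2=|\lambda|^2+|\beta|^2>|\lambda|^2$) rather than via the highest-root-plus-string argument you use; your remark about $\mathfrak{g}_{\pm 2\beta}$ is harmless but unnecessary, since $n_\beta$ lies in $\exp(\mathfrak{g}_\beta\oplus\mathfrak{g}_{-\beta})$.
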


\begin{proof}[Proof of Lemma $\ref{lem:G-conju_to_N_G(a)-conju}$]
For simplicity, we put $X_{\lambda}' := g X_{\lambda}$.
Since $N_G(\mathfrak{a}) = N_K(\mathfrak{a}) A$, it is enough to find $g' \in G$ such that $g' X_\lambda = X_\lambda'$ and $g' \mathfrak{a} = \mathfrak{a}$.
Let $A_{\lambda}$ be the coroot of $\lambda$ in $\mathfrak{a}$.
Then by Lemma \ref{lem:sl_2_in_g},
there exists $Y_\lambda, Y_\lambda' \in \mathfrak{g}_{-\lambda}$ such that 
$( A_{\lambda}, X_\lambda, Y_\lambda )$ and $( A_{\lambda}, X_\lambda', Y_\lambda' )$ are both $\mathfrak{sl}_2$-triples in $\mathfrak{g}$.
Since $g$ is an automorphism of $\mathfrak{g}$ and $g X_\lambda = X_\lambda'$, the triple
$(gA_{\lambda}, X_\lambda', gY_\lambda)$
is also an $\mathfrak{sl}_2$-triple in $\mathfrak{g}$.
In particular, $( A_{\lambda}, X_\lambda', Y_\lambda' )$ and $( g A_{\lambda}, X_\lambda', g Y_\lambda )$ are both $\mathfrak{sl}_2$-triples in $\mathfrak{g}$ with the same nilpotent element.
Therefore, by Kostant's theorem for $\mathfrak{sl}_2$-triples with the same nilpotent element in a semisimple Lie algebra, there exists an element $g_1 \in G$ such that \[
g_1 (g A_{\lambda}) = A_{\lambda}, \ 
g_1 X_\lambda' = X_\lambda' 
\text{ and } 
g_1 (g Y_\lambda) = Y_\lambda'.
\] 
Write $g_2 := g_1 \cdot g$.
Then 
\[
g_2 A_{\lambda} = A_{\lambda},\
 g_2 X_\lambda = X_\lambda' 
\text{ and } 
g_2 Y_\lambda = Y_\lambda'.
\]
Recall that $\mathfrak{a} = \mathbb{R} A_{\lambda} \oplus \Ker \lambda$.
If we find $g_3 \in G$ such that 
\[
g_3  (g_2  \Ker \lambda) = \Ker \lambda,\
g_3 A_{\lambda} = A_{\lambda}
\text{ and }
g_3 X_\lambda' = X_\lambda',
\]
then we can take $g'$ as $g_3 \cdot g_2$. 
We shall find such $g_3$.
Let us denote by 
$
\mathfrak{l}' = \mathbb{R}\text{-span} \langle A_{\lambda}, X_\lambda',Y_\lambda' \rangle
$
the subalgebra spaned by the $\mathfrak{sl}_2$-triple $(A_{\lambda},X_\lambda',Y_\lambda')$.
Then there exists a Cartan involution $\theta'$ on $\mathfrak{g}$ preserving $\mathfrak{l}'$ by Mostow's theorem \cite[Theorem 6]{Mostow55}.
We set 
\begin{align*}
\mathfrak{g}_0 
	:&= Z_\mathfrak{g}(\mathfrak{l}') \\
	&= \{\, X \in \mathfrak{g} \mid [X,A_{\lambda}] = [X,X_\lambda']  =0 \,\},
\end{align*}
where the second equation can be obtained by the representation theory of $\mathfrak{sl}(2,\mathbb{C})$.
We note that $\mathfrak{g}_0$ is a reductive subalgebra of $\mathfrak{g}$ since the Cartan involution $\theta'$ preserves $\mathfrak{g}_0$.
The subspace $\Ker \lambda$ of $\mathfrak{a}$ is contained in $\mathfrak{g}_0$ 
since $[\Ker \lambda, \mathfrak{l}'] = \{0\}$.
In particular, $\Ker \lambda$ becomes a maximally split abelian subspace of $\mathfrak{g}_0$.
We have 
\begin{align*}
&[g_2 \Ker \lambda, A_{\lambda}] = g_2 [\Ker \lambda, A_{\lambda}] = \{0\}, \\
&[g_2 \Ker \lambda, X_\lambda'] = g_2 [\Ker \lambda, X_\lambda] = \{0\}.
\end{align*}
Thus the subspace $g_2 \Ker \lambda$ of $g_2 \mathfrak{a}$ is also contained in $\mathfrak{g}_0$ and becomes a maximally split abelian subspace of $\mathfrak{g}_0$.
Let us write $G_0$ for the analytic subgroup of $G$ with its Lie algebra $\mathfrak{g}_0$.
Recall that any two maximally split abelian subalgebras of $\mathfrak{g}_0$ are $G_0$-conjugate. 
Then there exists $g_3 \in G_0 \subset G$ such that \[
g_3 (g_2 \Ker \lambda) = \Ker \lambda,
\]
and hence $g_3 A_{\lambda} = A_{\lambda}$ and $g_3 X_\lambda' = X_\lambda'$.
\end{proof}

To prove Lemma \ref{lem:N_G-congugate_in_g_lambda},
we need the following lemma for Weyl groups of root systems:
\begin{lem}\label{lem:Weyl_invariant}
Let $\Sigma$ be a root system realized in a vector space $V$ with an inner product $\langle \ ,\ \rangle$, 
and $W(\Sigma)$ the Weyl group of $\Sigma$ acting on $V$.
We fix a positive system $\Sigma^+$ of $\Sigma$, 
and write $\Pi$ for the simple system of $\Sigma^+$.
Let $v$ be a dominant vector, 
i.e.~$\langle \alpha, v \rangle \geq 0$ 
for any $\alpha \in \Sigma^+$, 
and $w \in W(\Sigma)$ with $w \cdot v =v$.
Then there exists a sequence $s_1,\dots,s_{l}$
of root reflections with $s_i \cdot v = v$ for any $i = 1,\dots,l$ 
such that 
\[
w = s_1 s_{2} \cdots s_{l}.
\]
\end{lem}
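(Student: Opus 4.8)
The plan is to argue by induction on the length $\ell(w)$ of $w$ with respect to the simple reflections in $\Pi$. If $\ell(w) = 0$ then $w = e$ and there is nothing to prove. Suppose $\ell(w) \geq 1$ and pick a reduced expression $w = s_{\beta_1} \cdots s_{\beta_k}$ with $\beta_i \in \Pi$. The standard theory of root systems tells us that any $w$ fixing the dominant vector $v$ lies in the parabolic subgroup $W_I$ generated by the simple reflections $s_\alpha$ with $\alpha \in I := \{\,\alpha \in \Pi \mid \langle \alpha, v\rangle = 0\,\}$; indeed, the stabilizer of a point in the closed dominant chamber is precisely the parabolic subgroup generated by the walls it lies on. This is the key structural input, and I would either cite it (e.g.\ Bourbaki, \emph{Groupes et alg\`ebres de Lie}, Ch.~V) or derive it quickly: among all elements $u$ of the coset $wW_I$, choose one of minimal length; minimality forces $\ell(u s_\alpha) > \ell(u)$ for all $\alpha \in I$, so $u$ sends every root in $I$ to a positive root, hence $u^{-1}(\Sigma^+ \setminus \Sigma_I^+) \subseteq \Sigma^+$; combined with $w v = v$ and a short computation with $\langle \cdot, v\rangle$ one gets $u v = v$ with $u$ sending the dominant chamber close enough to itself that $u = e$, so $w \in W_I$.

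Once $w \in W_I$, every simple reflection $s_\alpha$ appearing (necessarily, since $W_I$ is itself a Coxeter group on the generators $\{s_\alpha : \alpha \in I\}$ and $w$ has a reduced word in them) has $\alpha \in I$, i.e.\ $\langle \alpha, v\rangle = 0$, so $s_\alpha \cdot v = v - \langle \alpha^\vee, v\rangle \alpha = v$. Taking a reduced expression $w = s_{\alpha_1}\cdots s_{\alpha_\ell}$ inside $W_I$ gives exactly the desired factorization: each $s_i := s_{\alpha_i}$ is a root reflection fixing $v$, and their product is $w$. Note the lemma only asks for \emph{some} sequence of root reflections fixing $v$, not necessarily simple ones and not necessarily a reduced word, so this is more than enough.

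The main obstacle is the structural fact that $\mathrm{Stab}_{W(\Sigma)}(v) = W_I$ for $v$ dominant; everything else is bookkeeping. I would present the minimal-length-coset-representative argument sketched above as the cleanest self-contained route, since the rest of the paper is written to be largely independent of heavy citations, but a one-line appeal to Bourbaki would also be acceptable. One should be slightly careful that $W_I$ is the Weyl group of the sub-root-system $\Sigma_I = \Sigma \cap \mathrm{span}(I)$ with simple system $I$, so that ``reduced expression in $W_I$'' makes sense and only involves the reflections $s_\alpha$, $\alpha \in I$ — this is standard but worth stating explicitly so the conclusion ``$s_i \cdot v = v$'' is transparent.
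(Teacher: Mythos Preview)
Your proposal is correct but takes a different route from the paper. You reduce to the structural fact that the stabilizer of a dominant $v$ in $W(\Sigma)$ is exactly the parabolic subgroup $W_I$ for $I = \{\alpha \in \Pi : \langle \alpha, v\rangle = 0\}$, and then read off a reduced word inside $W_I$. The paper instead gives a direct, self-contained induction on $n := |\Sigma^+ \setminus w\Sigma^+|$ (which is $\ell(w)$): if $n \geq 1$ one finds a simple root $\alpha \in \Pi$ with $w^{-1}\alpha < 0$, deduces from dominance and $wv = v$ that $\langle \alpha, v\rangle = 0$, and then reduces to $s_\alpha w$, which has smaller $n$. This is of course exactly the standard proof of the stabilizer theorem you want to cite, so the two arguments have the same content; the paper simply unpacks it rather than importing it. Your version is shorter if one accepts the Bourbaki citation, while the paper's keeps the section independent of external references, consistent with the style elsewhere.

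One small caution on your coset-representative sketch: from ``$u$ sends every simple root in $I$ to a positive root'' you jump to ``$u^{-1}(\Sigma^+ \setminus \Sigma_I^+) \subseteq \Sigma^+$,'' which is not what follows (you get $u(\Sigma_I^+) \subset \Sigma^+$, a statement about $u$, not $u^{-1}$, and about $\Sigma_I^+$, not its complement). The clean way to finish is exactly the induction the paper does: from $uv = v$ and $\ell(u)$ minimal, if $u \neq e$ pick $\alpha \in \Pi$ with $u^{-1}\alpha < 0$, conclude $\langle \alpha, v\rangle = 0$ so $\alpha \in I$, and then $s_\alpha u \in w W_I$ has smaller length, contradicting minimality. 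So if you want a self-contained version, you end up writing the paper's argument anyway; otherwise just cite the result and drop the sketch.
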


\begin{proof}[Proof of Lemma \ref{lem:Weyl_invariant}]
Let $n := | \Sigma^+ \setminus w \Sigma^+|$.
We prove our claim by the induction of $n$.
If $n=0$, then $\Sigma^+ = w \Sigma^+$.
Thus $w \Pi = \Pi$ and $w = \Id_V$.
We assume that $n \geq 1$. Then $\Pi \setminus w \Sigma^+ \neq \emptyset$.
It suffice to show that 
any simple roots $\alpha \in \Pi \setminus w \Sigma^+$ satisfies that $\langle \alpha ,v \rangle =0$ and 
$|\Sigma^+ \setminus (s_\alpha w) \Sigma^+| \leq n-1$.
Since $w^{-1}\alpha \not \in \Sigma^+$, that is, $w^{-1}\alpha$ is a negative root,
we obtain that $\langle w^{-1}\alpha, v \rangle \leq 0$.
Combining $\langle \alpha, v \rangle \geq 0$ with $w \cdot v =v$, we have $\langle \alpha, v \rangle = 0$.
To complete the proof, we shall show the following: 
\begin{itemize}
\item For any $\beta \in w\Sigma^+ \cap \Sigma^+$, the root $s_\alpha \beta$ is  also in $\Sigma^+$.
\item There exists $\gamma \in w\Sigma^+ \setminus \Sigma^+$ such that $s_\alpha \gamma$ is in $\Sigma^+$.
\end{itemize}
In general, for any positive root $\beta \in \Sigma^+$ except for $\alpha$ or $2\alpha$, the root $s_\alpha \beta$ is also positive.
Thus for any $\beta \in w\Sigma^+ \cap \Sigma^+$, the root $s_\alpha \beta$ is in $\Sigma^+$ since $\alpha$ and $2\alpha$ are both not in $w \Sigma^+$. 
Thus the first one of our claims holds.
We take $\gamma := -\alpha$. 
Then $\gamma$ is in $w \Sigma^+ \setminus \Sigma^+$ since $\alpha$ is in $\Sigma^+ \setminus w\Sigma^+$.
Furthermore, $s_\alpha \gamma = \alpha$ is in $\Sigma^+$.
Thus the second one of our claims also holds.
Combining the claims above, we obtain that \[
|\Sigma^+ \cap w \Sigma^+| < |\Sigma^+ \cap (s_\alpha  w)\Sigma^+|,
\]
and hence $| \Sigma^+ \setminus (s_\alpha w) \Sigma^+| \leq n-1$.
\end{proof}

Let us give a proof of Lemma \ref{lem:N_G-congugate_in_g_lambda} as follows.

\begin{proof}[Proof of Lemma $\ref{lem:N_G-congugate_in_g_lambda}$]
We denote the element of $W(\mathfrak{g},\mathfrak{a}) = N_K(\mathfrak{a}) / Z_K(\mathfrak{a})$ corresponding to $m' \in N_K(\mathfrak{a})$ by $w$.
Then $w \lambda = \lambda$ since $m' \mathfrak{g}_\lambda = \mathfrak{g}_{w \lambda}$ and $m' \mathfrak{g}_{\lambda} \cap \mathfrak{g}_{\lambda} \neq \{0\}$ by the assumption.
By Lemma \ref{lem:Weyl_invariant}, 
$w$ can be written by 
\[
w = s_1 s_2 \cdots s_l
\]
where $s_i$ are root reflections of $W(\mathfrak{g},\mathfrak{a})$ with $s_i \lambda = \lambda$.
We write $\xi_i$ for the root of $\Sigma(\mathfrak{g},\mathfrak{a})$ corresponding to $s_i$ for each $i=1,\dots,l$. 
Let $\mathfrak{g}_i$ be the root space of $\xi_i$.
Since $s_i \lambda = \lambda$, each $\xi_i$ is orthogonal to $\lambda$ in $\mathfrak{a}^*$.
We can and do chose $X_i$ be a non-zero root vector of $\mathfrak{g}_i$ such that \[
B_\C(X_i,\theta X_i) = -\frac{2}{\langle \xi_i, \xi_i \rangle}
\]
where $\theta$ is the Cartan involution of $\mathfrak{g}$ 
corresponding to $\mathfrak{g} = \mathfrak{k} + \mathfrak{p}$.
Then the element $k_i = \exp \frac{\pi}{2} (X_i + \theta X_i)$ in $N_K(\mathfrak{a})$ acts on $\mathfrak{a}$ as the reflection $s_i$.
Thus $m := m' k_l k_{l-1} \cdots k_1$ acts trivially on $\mathfrak{a}$. 
That is, $m \in Z_K(\mathfrak{a}) = M$.
It remains to prove that $k_i X_\lambda = X_\lambda$. 
Since $\lambda$ is longest root of $\Sigma(\mathfrak{g},\mathfrak{a})$ 
by Lemma \ref{lem:restricted_highest_root}, 
and $\xi_i$ is orthogonal to $\lambda$, 
the element $\xi_i \pm \lambda$ of $\mathfrak{a}^*$ is not a root of $\Sigma(\mathfrak{g},\mathfrak{a})$.
In particular, $[X_i, X_\lambda] = 0$ and $[\theta X_i, X_\lambda] = 0$.
Hence, $k_i X_\lambda = X_\lambda$ for any $i$.
Therefore, we obtain that $m X_\lambda = m' X_\lambda$. 
\end{proof}

\subsection{$MA$-orbits in $\mathfrak{g}_\lambda$ in the cases where $\dim_\mathbb{R} \mathfrak{g}_\lambda \geq 2$}

In this subsection, we focus on the cases where $\dim_\mathbb{R} \mathfrak{g}_\lambda >2$, 
i.e.~$\mathfrak{g}$ is isomorphic to one of $\mathfrak{su}^*(2k)$, $\mathfrak{so}(n-1,1)$, $\mathfrak{sp}(p,q)$, $\mathfrak{e}_{6(-26)}$ or $\mathfrak{f}_{4(-20)}$,
and give a proof of Proposition \ref{prop:MA-orbits_in_g_lambda_2}.

We write $M_0$ for the identity component of $M$.
Then $M_0$, $M_0A$ are the analytic subgroups of $G$ with its Lie algebra 
$\mathfrak{m} = Z_{\mathfrak{k}}(\mathfrak{a})$, 
$\mathfrak{m} \oplus \mathfrak{a} = Z_{\mathfrak{g}}(\mathfrak{a})$, respectively.

Then the next lemma holds:

\begin{lem}\label{lem:real_rank_one_single_MA_orbit}
Suppose that $\dim_\mathbb{R} \mathfrak{g}_{\lambda} \geq 2$ 
and $\mathfrak{g}$ has real rank one, 
i.e.~$\dim_\mathbb{R} \mathfrak{a} = 1$.
Then $\mathfrak{g}_\lambda \setminus \{0\}$ becomes a single $M_0A$-orbit.
\end{lem}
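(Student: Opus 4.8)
The plan is to prove the stronger statement that $M_0A$ acts \emph{transitively} on $\mathfrak{g}_\lambda\setminus\{0\}$, by decoupling the role of $A$ (which only rescales) from that of $M_0$ (which we show is transitive on each sphere). Since $\dim_\mathbb{R}\mathfrak{a}=1$ we have $\mathfrak{a}=\mathbb{R}A_\lambda$, and $\mathrm{ad}_\mathfrak{g}(A_\lambda)$ acts on the $\lambda$-eigenspace $\mathfrak{g}_\lambda$ as the scalar $\lambda(A_\lambda)=2$; hence $A=\exp\mathfrak{a}$ acts on $\mathfrak{g}_\lambda\setminus\{0\}$ with orbits the rays $\mathbb{R}_{>0}v$. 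As $M_0\subseteq M=Z_K(\mathfrak{a})$ commutes with $A$ and preserves $\mathfrak{g}_\lambda$, the group $M_0A$ (which is the analytic subgroup with Lie algebra $\mathfrak{m}\oplus\mathfrak{a}$) satisfies $M_0A\cdot v=\mathbb{R}_{>0}\,(M_0\cdot v)$. Fixing an $M_0$-invariant inner product $\langle\,\cdot\,,\,\cdot\,\rangle$ on $\mathfrak{g}_\lambda$ (it exists because $M_0$ is compact), it therefore suffices to show that $M_0$ is transitive on each sphere $S_r=\{\,v\in\mathfrak{g}_\lambda\mid\langle v,v\rangle=r^2\,\}$.

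The key step is the infinitesimal transitivity identity
\[
[\,\mathfrak{m}\oplus\mathfrak{a},\,v\,]=\mathfrak{g}_\lambda\qquad\text{for every }v\in\mathfrak{g}_\lambda\setminus\{0\},
\]
which I would prove using Lemma~\ref{lem:sl_2_in_g} and the Jacobi identity. Applying Lemma~\ref{lem:sl_2_in_g} with $\xi=\lambda$ and the root vector $v$ gives $Y\in\mathfrak{g}_{-\lambda}$ with $[v,Y]=A_\lambda$. For an arbitrary $w\in\mathfrak{g}_\lambda$,
\[
2w=[A_\lambda,w]=[[v,Y],w]=[v,[Y,w]]-[Y,[v,w]].
\]
Now $[v,w]\in[\mathfrak{g}_\lambda,\mathfrak{g}_\lambda]\subseteq\mathfrak{g}_{2\lambda}$, and $\mathfrak{g}_{2\lambda}=\{0\}$ since $\lambda$ is a longest root of $\Sigma(\mathfrak{g},\mathfrak{a})$ by Lemma~\ref{lem:restricted_highest_root}, so $2\lambda$ is not a restricted root; also $[Y,w]\in[\mathfrak{g}_{-\lambda},\mathfrak{g}_\lambda]\subseteq\mathfrak{m}\oplus\mathfrak{a}$. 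Hence $2w=[v,[Y,w]]$, i.e. $w=\bigl[-\tfrac12[Y,w],\,v\bigr]\in[\mathfrak{m}\oplus\mathfrak{a},\,v]$. Since $[\mathfrak{a},v]=\mathbb{R}v$, this yields $[\mathfrak{m},v]+\mathbb{R}v=\mathfrak{g}_\lambda$.

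It then remains to run the standard open--closed argument. Fix $r>0$ and $v\in S_r$. Differentiating $\langle m\cdot v,m\cdot v\rangle=r^2$ in $m\in M_0$ gives $[\mathfrak{m},v]\perp v$, so that $[\mathfrak{m},v]+\mathbb{R}v=\mathfrak{g}_\lambda$ forces $[\mathfrak{m},v]=v^{\perp}=T_vS_r$; thus the orbit $M_0\cdot v\subseteq S_r$ has full tangent space at $v$, and by homogeneity is open in $S_r$. It is also compact, hence closed in $S_r$, and $S_r$ is connected precisely because $\dim_\mathbb{R}\mathfrak{g}_\lambda\geq2$ --- this is where the hypothesis is used. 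Therefore $M_0\cdot v=S_r$. Letting $r$ range over $\mathbb{R}_{>0}$ and combining with the rescaling action of $A$, we conclude that $\mathfrak{g}_\lambda\setminus\{0\}$ is a single $M_0A$-orbit.

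I expect the one substantive point to be the identity $[\mathfrak{m}\oplus\mathfrak{a},v]=\mathfrak{g}_\lambda$, and within it the vanishing of $\mathfrak{g}_{2\lambda}$: this is exactly the real-rank-one input (via Lemma~\ref{lem:restricted_highest_root}), and it is what lets the argument proceed uniformly, without appealing to the classification of rank-one simple Lie algebras. Everything downstream --- the reduction to sphere-transitivity and the open/closed/connected argument --- is routine once that identity and the hypothesis $\dim_\mathbb{R}\mathfrak{g}_\lambda\geq2$ are available.
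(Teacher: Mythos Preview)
Your proof is correct and rests on the same core identity as the paper's, namely $[\mathfrak{m}\oplus\mathfrak{a},v]=\mathfrak{g}_\lambda$ for every nonzero $v\in\mathfrak{g}_\lambda$, followed by a connectedness argument using $\dim_\mathbb{R}\mathfrak{g}_\lambda\geq 2$. The tactical choices differ: the paper obtains the identity from $\mathfrak{sl}(2,\mathbb{C})$-representation theory (the map $\ad X_\lambda:(\mathfrak{g}_\mathbb{C})_0\to(\mathfrak{g}_\mathbb{C})_2$ is surjective), using real rank one to identify $(\mathfrak{g}_\mathbb{C})_0=(\mathfrak{m}\oplus\mathfrak{a})_\mathbb{C}$ and $(\mathfrak{g}_\mathbb{C})_2=(\mathfrak{g}_\lambda)_\mathbb{C}$; it then concludes directly that every $M_0A$-orbit is open in $\mathfrak{g}_\lambda$ and invokes connectedness of $\mathfrak{g}_\lambda\setminus\{0\}$. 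You instead derive the identity by a bare-hands Jacobi computation and globalize by factoring $M_0A$ into a radial $A$-action and a spherical $M_0$-action, using compactness of $M_0$ for the open--closed step on each sphere.

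One remark: your attribution of the real-rank-one hypothesis is off. The vanishing of $\mathfrak{g}_{2\lambda}$ follows from $\lambda$ being the highest (hence longest) restricted root, which holds in every rank; likewise $[\mathfrak{g}_{-\lambda},\mathfrak{g}_\lambda]\subseteq\mathfrak{m}\oplus\mathfrak{a}$ needs no rank assumption. So your Jacobi argument in fact establishes $[\mathfrak{m}\oplus\mathfrak{a},v]=\mathfrak{g}_\lambda$ without using rank one at all---a pleasant byproduct that the paper's $\mathfrak{sl}_2$-eigenspace argument does not share (and which the paper recovers for higher rank only by passing to the rank-one subalgebra $\mathfrak{g}'$ in the proof of Proposition~\ref{prop:MA-orbits_in_g_lambda_2}).
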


\begin{proof}[Proof of Lemma $\ref{lem:real_rank_one_single_MA_orbit}$]
Let $A_{\lambda}$ be the coroot of $\lambda$ in $\mathfrak{a}$.
Since $\mathfrak{g}$ has real rank one, 
$\mathfrak{a} = \mathbb{R} A_{\lambda}$ and $\mathfrak{g}$ can be written by 
\[
\mathfrak{g} = \mathfrak{g}_{-\lambda} \oplus \mathfrak{g}_{-\frac{\lambda}{2}} \oplus \mathfrak{m} \oplus \mathfrak{a} \oplus \mathfrak{g}_{\frac{\lambda}{2}}\oplus \mathfrak{g}_{\lambda}
\]
(possibly $\mathfrak{g}_{\pm \frac{\lambda}{2}} = \{0\}$).
Let us denote by $\mathfrak{g}_\mathbb{C}$, $\mathfrak{m}_\mathbb{C}$, $\mathfrak{a}_\mathbb{C}$, $(\mathfrak{g}_{\pm \lambda})_\mathbb{C}$, $(\mathfrak{g}_{\pm \frac{\lambda}{2}})_\mathbb{C}$ the complexification of $\mathfrak{g}$, $\mathfrak{m}$, $\mathfrak{a}$, $\mathfrak{g}_{\pm \lambda}$, $\mathfrak{g}_{\pm \frac{\lambda}{2}}$, respectively.
We set
\[
(\mathfrak{g}_\mathbb{C})_i = \{\, X \in \mathfrak{g}_\mathbb{C} \mid [A_{\lambda}, X] = i X \,\} 
\quad \text{for each } i \in \mathbb{Z}.
\]
Then 
\[
(\mathfrak{g}_\mathbb{C})_0 = \mathfrak{m}_\mathbb{C} \oplus \mathfrak{a}_{\mathbb{C}},\ (\mathfrak{g}_\mathbb{C})_{\pm 1} = (\mathfrak{g}_{\pm \frac{\lambda}{2}})_\mathbb{C},\ (\mathfrak{g}_\mathbb{C})_{\pm 2} = (\mathfrak{g}_{\pm \lambda})_\mathbb{C}.
\]
By Lemma \ref{lem:sl_2_in_g}, for any non-zero highest root vector $X_\lambda$ in $\mathfrak{g}_\lambda$, there exists $Y_\lambda \in \mathfrak{g}_{-\lambda}$ such that 
$( A_{\lambda}, X_\lambda,Y_\lambda )$ is an $\mathfrak{sl}_2$-triple in $\mathfrak{g}_\mathbb{C}$.
By the theory of representations of $\mathfrak{sl}(2,\mathbb{C})$, we obtain that $[X_\lambda, (\mathfrak{g}_\mathbb{C})_0] = (\mathfrak{g}_\mathbb{C})_2$.
In particular, \[
[\mathfrak{m} \oplus \mathfrak{a}, X_\lambda] = \mathfrak{g}_\lambda. 
\]
Therefore, for the $M_0A$-orbit $\Orbit^{M_0A}(X_\lambda)$ in $\mathfrak{g}_\lambda$ through $X_\lambda$, we obtain that 
\[
\dim_\mathbb{R} \Orbit^{M_0A}(X_\lambda)= \dim_\mathbb{R} \mathfrak{g}_\lambda.
\]
This means that the $M_0A$-orbit $\Orbit^{M_0A}(X_\lambda)$ is open in $\mathfrak{g}_\lambda$ for any $X_\lambda \in \mathfrak{g}_\lambda \setminus \{0\}$.
Recall that we are assuming that $\dim_\mathbb{R} \mathfrak{g}_\lambda \geq 2$. 
Then $\mathfrak{g}_\lambda \setminus \{0\}$ is connected.
Therefore, $\mathfrak{g}_\lambda \setminus \{0\}$ becomes a single $M_0A$-orbit.
\end{proof}

Let us give a proof of Proposition \ref{prop:MA-orbits_in_g_lambda_2} 
by using Lemma \ref{lem:real_rank_one_single_MA_orbit} as follows.

\begin{proof}[Proof of Proposition $\ref{prop:MA-orbits_in_g_lambda_2}$]
Let us put 
$\mathfrak{h}' := [\mathfrak{g}_{\lambda},\mathfrak{g}_{-\lambda}] \subset \mathfrak{m} \oplus \mathfrak{a}$.
Then $\mathfrak{g}' := \mathfrak{g}_{-\lambda} \oplus \mathfrak{h}' \oplus \mathfrak{g}_{\lambda}$ 
becomes a subalgebra of $\mathfrak{g}$
since $\pm 2\lambda$ is not a root.
We shall prove that $\mathfrak{g}'$ is a simple Lie algebra of real rank one without complex structure.

Let $\theta$ be the Cartan involution of $\mathfrak{g}$ corresponding to $\mathfrak{g} = \mathfrak{k} + \mathfrak{p}$.
Then $\mathfrak{h}'$ is $\theta$-stable.
Therefore, $\mathfrak{h}'$ can be written by $\mathfrak{h}' = \mathfrak{m}' \oplus \mathfrak{a}'$ with $\mathfrak{m}' \subset \mathfrak{m}$ and $\mathfrak{a}' \subset \mathfrak{a}$.
For any $X_\lambda \in \mathfrak{g}_\lambda$, $X_{-\lambda} \in \mathfrak{g}_{-\lambda}$ and $A \in \mathfrak{a}$,
we have  
\begin{align*}
B_\C([X_\lambda,X_{-\lambda}],A) 
	&= B_\C(X_\lambda, [X_{-\lambda}, A]) \\
	&= \lambda(A) B_\C(X_\lambda,X_{-\lambda}) \\
	&= B_\C(X_\lambda,X_{-\lambda}) \frac{\langle \lambda, \lambda \rangle}{2} B_\C(A_{\lambda},A).
\end{align*}
Thus $\mathfrak{a}'$ can be written by $\mathfrak{a}' = \mathbb{R} A_{\lambda}$ since $B_\C(\mathfrak{g}_\lambda,\mathfrak{g}_{-\lambda}) = \mathbb{R}$, 
where $A_{\lambda}$ is the coroot of $\lambda$ in $\mathfrak{a}$
and $B_\C$ is the Killing form on $\mathfrak{g}_\C$.
For each $\mathfrak{s} = \mathfrak{g}', \mathfrak{h}', \mathfrak{m}',\mathfrak{a}, \mathfrak{g}_{\pm \lambda}$, 
We denote by $\mathfrak{s}_\mathbb{C}$ the complexification of $\mathfrak{s}$.
Let us fix any non-zero ideal $\mathfrak{I}$ of the complex Lie algebra $\mathfrak{g}'_\mathbb{C}$, 
and we shall prove that $\mathfrak{I} = \mathfrak{g}'_\mathbb{C}$.

First, we show $\mathfrak{I} \cap \mathfrak{g}_{-\lambda} \neq \{0\}$.
To this, we only need to prove that $\mathfrak{I} \cap (\mathfrak{g}_{-\lambda})_{\mathbb{C}} \neq \{0\}$
because $\mathfrak{I}$ is closed under the multiple of $\sqrt{-1}$.
We take a non-zero element $X$ in $\mathfrak{I}$.
Then the element $X$ can be written by 
\[
X = X_{\mathfrak{m}'} + c A_{\lambda} + X_{\lambda} + X_{-\lambda} \quad (X_{\mathfrak{m}'} \in \mathfrak{m}'_\mathbb{C}, c \in \mathbb{C}, X_{\lambda} \in (\mathfrak{g}_{\lambda})_\mathbb{C}, X_{-\lambda} \in (\mathfrak{g}_{-\lambda})_{\mathbb{C}}).
\]
We now construct a non-zero element in $\mathfrak{I} \cap (\mathfrak{g}_{-\lambda})_\mathbb{C}$ dividing into the following cases:
\begin{description}
\item [The cases where $X_\lambda \neq 0$] 
In this case, we can assume that $X_\lambda \in \mathfrak{g}_{\lambda}$ by the same argument above.
Then by Lemma \ref{lem:sl_2_in_g}, there exists $Y_{\lambda} \in \mathfrak{g}_{-\lambda}$ such that $( A_{\lambda}, X_\lambda,Y_{\lambda} )$ becomes an $\mathfrak{sl}_2$-triple.
Recall that $- 2 \lambda$ is not a root of $\Sigma(\mathfrak{g},\mathfrak{a})$. 
Thus we have \[
[Y_\lambda, [Y_{\lambda}, X]] = -2 Y_{\lambda}
\]
and hence $Y_{\lambda} \in \mathfrak{I} \cap \mathfrak{g}_{-\lambda}$.
\item [The cases where $X_\lambda = 0$ and $c \neq 0$]
In this case, for any non-zero vector $Y$ in $\mathfrak{g}_{-\lambda}$,
\[
[Y,X] = [Y,X_{\mathfrak{m}'} + c A_{\lambda}] = [Y,X_{\mathfrak{m}'}] + 2c Y \in (\mathfrak{g}_{-\lambda})_\mathbb{C}
\] is not zero since $\ad_{\mathfrak{g}_\mathbb{C}} X_{\mathfrak{m}'}$ has no non-zero real eigen-value.
Thus $[Y,X]$ is a non-zero vector of $\mathfrak{I} \cap (\mathfrak{g}_{-\lambda})_{\mathbb{C}}$.
\item [The cases where $X_\lambda = 0$, $c = 0$ and $X_{\mathfrak{m}'} \neq 0$]
In this case, we can assume that $X_{\mathfrak{m}'}$ is in $\mathfrak{m}'$ 
by the same argument above,
and we shall show that $[\mathfrak{g}_{-\lambda},X_{\mathfrak{m}'}] \neq \{0\}$ in $\mathfrak{g}_{-\lambda}$.
Since $X_{\mathfrak{m}'} \neq 0$, we have $\mathfrak{m}' \neq \{0\}$ in this case.
We now assume that $[\mathfrak{g}_{-\lambda},X_{\mathfrak{m}'}]$ is zero.
Then \[
B_\C(\mathfrak{h}', X_{\mathfrak{m}'}) = B_\C([\mathfrak{g}_{\lambda},\mathfrak{g}_{-\lambda}],X_{\mathfrak{m}'}) = B_\C(\mathfrak{g}_\lambda, [\mathfrak{g}_{-\lambda},X_{\mathfrak{m}'}]) = \{0\}.
\]
In particular, $B_\C(\mathfrak{m}',X_{\mathfrak{m}'}) = \{0\}$.
This contradicts the non-degenerateness of $B$ on $\mathfrak{k}$.
\item [The cases where $X = X_{-\lambda}$]
In this case, $X \in \mathfrak{I} \cap (\mathfrak{g}_{-\lambda})_\mathbb{C}$.
\end{description}
Thus $\mathfrak{I} \cap (\mathfrak{g}_{-\lambda})_{\mathbb{C}} \neq \{0\}$ and hence $\mathfrak{I} \cap \mathfrak{g}_{-\lambda} \neq \{0\}$.

We fix non-zero element $Y_\lambda$ in $\mathfrak{I} \cap \mathfrak{g}_{-\lambda}$.
Then by using Lemma \ref{lem:sl_2_in_g}, we can find $X_\lambda \in \mathfrak{g}_{\lambda}$ such that $( A_{\lambda}, X_\lambda,Y_\lambda )$ becomes an $\mathfrak{sl}_2$-triple in $\mathfrak{g}$ 
(since we can find $X_\lambda \in \mathfrak{g}_\lambda$ such that $( -A_{\lambda}, Y_\lambda, X_\lambda )$ is an $\mathfrak{sl}_2$-triple in $\mathfrak{g}$ by Lemma \ref{lem:sl_2_in_g} for $\xi = -\lambda$).
Hence, $A_{\lambda}$ is in $\mathfrak{I}$, and this implies that $\mathfrak{g}_\lambda, \mathfrak{g}_{-\lambda} \subset \mathfrak{I}$.
Since $\mathfrak{h}' = [\mathfrak{g}_{\lambda}, \mathfrak{g}_{-\lambda}]$, we have $\mathfrak{I} = \mathfrak{g}'_\mathbb{C}$.
This means that $\mathfrak{g}'_\C$ is a complex simple Lie algebra.
Since $\mathfrak{g}'$ is $\theta$-stable,
$\theta|_{\mathfrak{g}'}$ is a Cartan decomposition of $\mathfrak{g}'$ and $\mathfrak{a}' =  \mathbb{R} A_{\lambda}$ is a maximally split abelian subspace of $\mathfrak{g}'$.
In particular, $\mathfrak{g}' = \mathfrak{m}' \oplus \mathfrak{a}' \oplus \mathfrak{g}_{\pm \lambda}$ 
is a root space decomposition of $\mathfrak{g}'$.
Therefore, $\mathfrak{g}'$ is a real simple Lie algebra of real rank one with $\dim \mathfrak{g}'_\lambda \geq 2$ such that its complexification $\mathfrak{g}'_\C$ is also simple.

We denote by $M'_0$ the analytic subgroup of $G$ with its Lie algebra $\mathfrak{m}'$ and put $A'=\Exp \mathbb{R} A_{\lambda}$.
Then by Lemma \ref{lem:real_rank_one_single_MA_orbit}, 
we obtain that $\mathfrak{g}_\lambda \setminus \{0\}$ becomes a single $M'_0A'$-orbit.
Since any adjoint $M'_0A'$-orbit is contained in an adjoint $M_0A$-orbit, $\mathfrak{g}_\lambda \setminus \{0\}$ also becomes a single $M_0A$-orbit.
\end{proof}

\subsection{$MA$-orbits in $\mathfrak{g}_\lambda$ in the cases where $\dim_\mathbb{R} \mathfrak{g}_\lambda = 1$}

Throughout this subsection, we consider the cases where $\dim_\mathbb{R} \mathfrak{g}_\lambda = 1$ and give a proof of Proposition \ref{prop:MA-orbits_in_g_lambda_1}.

Let us denote by $\mathfrak{g}_\lambda^{+}$ and $\mathfrak{g}_{\lambda}^{-}$ the connected components of $\mathfrak{g}_\lambda \setminus \{0\}$.
Since for any $t \in \mathbb{R}$ and $X_\lambda \in \mathfrak{g}_\lambda$, 
\[
(\exp t A_{\lambda}) X_\lambda = e^{2t} X_\lambda,
\]
where $A_{\lambda}$ is the coroot of $\lambda$ in $\mathfrak{a}$,
$A = \exp \mathfrak{a}$ acts transitively on $\mathfrak{g}_\lambda^{+}$, $\mathfrak{g}_\lambda^{-}$, respectively.

We ask what is the condition to the existence of $m \in M$ such that $m \cdot \mathfrak{g}_{\lambda}^+ = \mathfrak{g}_\lambda^{-}$.
The following lemma answers our question:

\begin{lem}\label{lem:minus_in_F}
There exists $m \in M$ such that $m \cdot \mathfrak{g}_{\lambda}^+ = \mathfrak{g}_{\lambda}^{-}$ if and only if the type of the restricted root system $\Sigma(\mathfrak{g},\mathfrak{a})$ is not $C$ nor $BC$. 
Here, we consider the root system of type $A_1$, $B_2$ as $C_1$, $C_2$, respectively.
\end{lem}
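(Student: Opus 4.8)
The plan is to analyse how $M = Z_K(\mathfrak{a})$ acts on the one--dimensional space $\mathfrak{g}_\lambda$. Since $\dim_\R\mathfrak{g}_\lambda = 1$ and $M$ is compact, the image of $M$ in $GL(\mathfrak{g}_\lambda)\simeq GL(1,\R)=\R^\times$ is a compact subgroup, hence is contained in $\{\pm 1\}$; write $\varepsilon\colon M\to\{\pm 1\}$ for the resulting character. As $A=\exp\mathfrak{a}$ acts on each of $\mathfrak{g}_\lambda^+$ and $\mathfrak{g}_\lambda^-$ transitively by positive scalars, there is $m\in M$ with $m\cdot\mathfrak{g}_\lambda^+=\mathfrak{g}_\lambda^-$ if and only if $\varepsilon$ is non--trivial; moreover $\varepsilon$ is automatically trivial on the identity component $M_0$, since $M_0$ is connected and $\{\pm1\}$ is discrete. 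So the task is to decide for which $\mathfrak{g}$ the character $\varepsilon$ is non--trivial.

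To produce non--trivial elements I would reuse the construction from the proof of Lemma \ref{lem:N_G-congugate_in_g_lambda}: for a restricted root $\gamma\in\Sigma(\mathfrak{g},\mathfrak{a})$, normalize a root vector $X_\gamma\in\mathfrak{g}_\gamma$ so that $(A_\gamma,X_\gamma,-\theta X_\gamma)$ is an $\mathfrak{sl}_2$--triple in $\mathfrak{g}$ (with $\theta$ the Cartan involution), put $k_\gamma:=\exp(\frac{\pi}{2}(X_\gamma+\theta X_\gamma))\in N_K(\mathfrak{a})$, which acts on $\mathfrak{a}$ as the reflection $s_\gamma$, and set $m_\gamma:=k_\gamma^{\,2}$. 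Then $m_\gamma$ acts trivially on $\mathfrak{a}$, so $m_\gamma\in M$, and inside the associated three--dimensional subgroup $m_\gamma$ is the image of the central element $-\id$ of $SL(2,\R)$; hence $\Ad(m_\gamma)$ acts on each restricted root space $\mathfrak{g}_\mu$ by $(-1)^{\mu(A_\gamma)}$, where $\mu(A_\gamma)=2\langle\mu,\gamma\rangle/\langle\gamma,\gamma\rangle\in\mathbb{Z}$. In particular $\varepsilon(m_\gamma)=(-1)^{\lambda(A_\gamma)}$, so $\varepsilon$ is non--trivial whenever some restricted root $\gamma$ has $\lambda(A_\gamma)$ odd. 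A direct inspection of the finite list of irreducible (possibly non--reduced) root systems -- concretely, $\lambda(A_\alpha)=0$ for every simple root $\alpha$ not joined to the affine node $-\lambda$ of the extended Dynkin diagram, and the value of $\lambda(A_\alpha)$ for the neighbour(s) of $-\lambda$ is read off that diagram -- shows that such a $\gamma$ exists exactly when $\Sigma(\mathfrak{g},\mathfrak{a})$ is of type $A_n\,(n\ge2)$, $B_n\,(n\ge3)$, $D_n\,(n\ge4)$, $E_6$, $E_7$, $E_8$, $F_4$ or $G_2$; i.e.\ exactly when it is neither of type $C$ nor of type $BC$ under the stated convention that $A_1$ and $B_2$ count as $C_1$ and $C_2$. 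This settles the ``if'' direction.

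For the converse, suppose $\Sigma(\mathfrak{g},\mathfrak{a})$ is of type $C_n$ or $BC_n$. By the structure theory of $M$ one has $M=M_0\cdot\langle m_\gamma\mid\gamma\in\Sigma(\mathfrak{g},\mathfrak{a})\rangle$; combined with the formula $\varepsilon(m_\gamma)=(-1)^{\lambda(A_\gamma)}$ and the triviality of $\varepsilon$ on $M_0$, it is enough to check that $\lambda(A_\gamma)$ is even for every restricted root $\gamma$. This is immediate from the explicit description of these systems: writing the highest root as $\lambda=2e_1$ in the usual coordinates, one finds $\lambda(A_\gamma)\in\{0,2\}$ for every $\gamma$ other than the short root $e_1$ of a $BC$--system, for which $\lambda(A_{e_1})=4$. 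Hence $\varepsilon\equiv1$ and no element of $M$ interchanges $\mathfrak{g}_\lambda^+$ and $\mathfrak{g}_\lambda^-$.

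The one external input that needs care is the structure--theoretic fact that $M$ is generated by $M_0$ together with the elements $m_\gamma$; everything else is the same $\mathfrak{sl}_2$--triple bookkeeping and root--system inspection used elsewhere in the paper. For the application in Proposition \ref{prop:MA-orbits_in_g_lambda_1} it is also worth recording that, among the $\mathfrak{g}$ with $\dim_\R\mathfrak{g}_\lambda=1$, the dichotomy ``$\Sigma(\mathfrak{g},\mathfrak{a})$ of type $C$ or $BC$'' versus ``neither'' coincides with the dichotomy ``$(\mathfrak{g},\mathfrak{k})$ of Hermitian type'' versus ``of non--Hermitian type''.
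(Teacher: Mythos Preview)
Your argument is correct and essentially the same as the paper's: both rely on the structure theorem $M=FM_0$ (Knapp) and the sign formula $(-1)^{\lambda(A_\gamma)}$, reducing the question to an inspection of root systems that isolates the $C$/$BC$ types. The only cosmetic difference is that you realise the generators of $F$ as $m_\gamma=k_\gamma^{\,2}$ inside the real group $K$, whereas the paper writes them as $\gamma_\xi=\exp(\pi\sqrt{-1}A_\xi)\in G_\C$; but both are the image of $-I$ under the associated $SL(2)\to G_\C$, hence the same element of $M$, and your framing via the sign character $\varepsilon\colon M\to\{\pm1\}$ is a tidy repackaging rather than a different idea.
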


To prove Lemma \ref{lem:minus_in_F}, 
we use the following fact for a structure of $M$.

\begin{fact}[cf.~{\cite[Chapter VII, Section 5]{Knapp02}}]\label{fact:str_of_M}
For any root $\xi$ of $\Sigma(\mathfrak{g},\mathfrak{a})$, 
we define $\gamma_\xi \in G_\mathbb{C}$ by 
\[
\gamma_\xi = \exp \pi \sqrt{-1}  A_{\xi},
\]
where $A_{\xi}$ is the coroot of $\xi$ in $\mathfrak{a}$. 
Let $F$ be the subgroup of $G_\mathbb{C}$ generated by $\gamma_\xi$ for all root $\xi$ of $\Sigma(\mathfrak{g},\mathfrak{a})$.
Then all $\gamma_\xi$ are in $M$ and $M = FM_0$, 
where $M_0$ is the identity component of $M$.
\end{fact}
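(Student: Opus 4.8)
The plan is to prove the two assertions separately: first that each $\gamma_\xi$ lies in $M=Z_K(\mathfrak a)$, and then that these elements together with $M_0$ exhaust $M$. The first is a direct $\mathfrak{sl}_2$-computation reusing the reflection elements already built in the proof of Lemma \ref{lem:N_G-congugate_in_g_lambda}; the second is the genuinely structural statement, for which I would follow the standard reduction to the component group of a maximally split Cartan subgroup. Throughout, $\theta$ denotes the Cartan involution attached to $\mathfrak g=\mathfrak k+\mathfrak p$.

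For membership, fix a root $\xi$ of $\Sigma(\mathfrak g,\mathfrak a)$ with coroot $A_\xi\in\mathfrak a$, so $\xi(A_\xi)=2$ and $\theta A_\xi=-A_\xi$. Following the normalization in the proof of Lemma \ref{lem:N_G-congugate_in_g_lambda}, I would choose a root vector $X_\xi\in\mathfrak g_\xi$ with $B_\C(X_\xi,\theta X_\xi)=-2/\langle\xi,\xi\rangle$; then $(A_\xi,X_\xi,-\theta X_\xi)$ is an $\mathfrak{sl}_2$-triple in $\mathfrak g$ (Lemma \ref{lem:sl_2_in_g}), and $A_\xi$, $X_\xi+\theta X_\xi$, $X_\xi-\theta X_\xi$ span a $\theta$-stable subalgebra isomorphic to $\mathfrak{sl}(2,\R)$ in which the first and third elements are non-compact while $X_\xi+\theta X_\xi\in\mathfrak k$ is compact. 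Carrying the computation out inside the corresponding $\mathrm{SL}(2)$-subgroup of $G_\C$ --- where $A_\xi\leftrightarrow\diag(1,-1)$ and $X_\xi+\theta X_\xi\leftrightarrow\left(\begin{smallmatrix}0&1\\-1&0\end{smallmatrix}\right)$ --- yields the key identity
\[
\gamma_\xi=\exp(\pi\sqrt{-1}A_\xi)=\Big(\exp\tfrac{\pi}{2}(X_\xi+\theta X_\xi)\Big)^2 .
\]
The right-hand side is the square of an element of $\exp(\mathfrak k)\subseteq K$, so $\gamma_\xi\in K$; and since $\ad(A_\xi)$ vanishes on $\mathfrak a$, we get $\Ad(\gamma_\xi)|_{\mathfrak a}=\exp(\pi\sqrt{-1}\,\ad A_\xi)|_{\mathfrak a}=\id$, so $\gamma_\xi$ centralizes $\mathfrak a$. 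Hence $\gamma_\xi\in Z_K(\mathfrak a)=M$, and therefore $F\subseteq M$.

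For $M=FM_0$, I would pass to a maximally split Cartan. Choose $\mathfrak t$ maximal abelian in $\mathfrak m=Z_\mathfrak k(\mathfrak a)$, so that $\mathfrak h_0:=\mathfrak t\oplus\mathfrak a$ is maximal abelian in $\mathfrak g$ and $\mathfrak h:=(\mathfrak h_0)_\C$ is the Cartan subalgebra of $\mathfrak g_\C$ of Section \ref{subsection:Satake_diagrams}. Given $m\in M$, since $\Ad(m)$ preserves $\mathfrak m$ and permutes its maximal abelian subspaces while $M_0$ acts transitively on these, there is $m_0\in M_0$ with $\Ad(m_0m)\mathfrak t=\mathfrak t$; as $\Ad(m_0m)$ also fixes $\mathfrak a$ pointwise, $m_0m$ normalizes $\mathfrak h_0$ and induces an element of $W(\mathfrak g_\C,\mathfrak h)$ fixing $\mathfrak a$ pointwise. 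The pointwise stabilizer of $\mathfrak a$ in the Weyl group is generated by the reflections in roots vanishing on $\mathfrak a$, i.e.\ in roots of $(\mathfrak m,\mathfrak t)$, and these reflections are realized inside the connected compact group $M_0$; so after a further correction by $M_0$ the resulting element centralizes $\mathfrak h_0$, that is, it lies in $Z_K(\mathfrak h_0)$. Thus $M=M_0\cdot Z_K(\mathfrak h_0)$, and the problem is reduced to the component group of the compact Cartan subgroup $Z_K(\mathfrak h_0)$, whose identity component is the torus $\exp\mathfrak t\subseteq M_0$.

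The main obstacle is precisely this last point: to show that every component of $Z_K(\mathfrak h_0)$ already meets $F\cdot M_0$, equivalently that the finite group $Z_K(\mathfrak h_0)/\exp\mathfrak t$ is generated by the classes of the $\gamma_\xi$. This is the substance of Knapp's structure theorem for $M$: one verifies $Z_K(\mathfrak h_0)\subseteq M_0\cdot\exp(\sqrt{-1}\mathfrak a)$ and that $M\cap\exp(\sqrt{-1}\mathfrak a)$ --- a finite elementary abelian $2$-group --- is generated by the elements $\exp(\pi\sqrt{-1}A_\xi)=\gamma_\xi$, the values at parameter $\pi$ of the circle subgroups $t\mapsto\exp(\sqrt{-1}tA_\xi)$. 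Granting this, every $m\in M$ lies in $M_0F$, giving $M=FM_0$; I would record the reduction above and cite \cite[Chapter VII, Section 5]{Knapp02} for the final component-group computation.
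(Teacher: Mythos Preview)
The paper does not give a proof of this statement: it is recorded as a \emph{Fact} with a bare citation to \cite[Chapter VII, Section 5]{Knapp02}, and is then used as a black box in the proof of Lemma~\ref{lem:minus_in_F}. So there is no ``paper's own proof'' to compare your attempt against.

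That said, your write-up is sound. The membership argument $\gamma_\xi\in M$ is correct and complete: the $\mathfrak{sl}_2$-identity
\[
\exp(\pi\sqrt{-1}\,A_\xi)=\Bigl(\exp\tfrac{\pi}{2}(X_\xi+\theta X_\xi)\Bigr)^{2}
\]
holds exactly as you say (both sides equal $-I$ in the $SL(2,\C)$-model), and it immediately places $\gamma_\xi$ in $K$; centralizing $\mathfrak a$ is then clear. For $M=FM_0$, your reduction via a maximally split Cartan $\mathfrak h_0=\mathfrak t\oplus\mathfrak a$ to the component group of $Z_K(\mathfrak h_0)$ is the standard route, and you are right to flag that the remaining step --- that $Z_K(\mathfrak h_0)/\exp\mathfrak t$ is exhausted by the classes of the $\gamma_\xi$ --- is the genuinely nontrivial input, which you (like the paper) defer to Knapp. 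In short: your first half goes beyond what the paper provides, and your second half ends at the same citation the paper begins with.
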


\begin{proof}[Proof of Lemma \ref{lem:minus_in_F}]
We first assume that the type of $\Sigma(\mathfrak{g},\mathfrak{a})$ is not $C$  and not $BC$.
Then $\Sigma(\mathfrak{g},\mathfrak{a})$ is reduced and the Dynkin diagram of $\Sigma(\mathfrak{g},\mathfrak{a})$ satisfying the following property:
All nodes of the diagram corresponding to a longest root of $\Sigma(\mathfrak{g},\mathfrak{a})$ have some edges with odd multiplicity.
This means that for any longest root $\mu$, there exists a root $\xi$ of $\Sigma(\mathfrak{g},\mathfrak{a})$ such that 
$2\langle \mu, \xi \rangle/\langle \xi,\xi \rangle$
is odd.
In particular, since the highest root $\lambda$ of $\Sigma(\mathfrak{g},\mathfrak{a})$ is a longest root (by Lemma \ref{lem:restricted_highest_root}), we can find a root $\xi$ of $\Sigma(\mathfrak{g},\mathfrak{a})$ such that 
$2\langle \lambda, \xi \rangle/\langle \xi,\xi \rangle$
is odd.
Therefore, the element $\gamma_\xi = \exp \pi \sqrt{-1} A_{\xi}$ of $M$ (by Fact \ref{fact:str_of_M}) acts on $\mathfrak{g}_{\lambda}$ as the scalar multiplication of $-1$.
Thus in this case, we can take $m = \gamma_\xi$ satisfying that $m \cdot \mathfrak{g}_{\lambda}^{+} = \mathfrak{g}_{\lambda}^{-}$.
Conversely, we suppose that the type of $\Sigma(\mathfrak{g},\mathfrak{a})$
is $C$ or $BC$.
Then we can observe that for any longest root $\mu$ and root $\xi$ of $\Sigma(\mathfrak{g},\mathfrak{a})$, 
$
2\langle \mu, \xi \rangle/\langle \xi,\xi \rangle
$
is even.
Since the highest root $\lambda$ is longest, all generators $\gamma_\xi = \exp \pi \sqrt{-1} A_{\xi}$ of $F$ act on $\mathfrak{g}_\lambda$ trivially.
Thus all elements of $M = FM_0$ preserve 
$\mathfrak{g}_\lambda^{+}$ and $\mathfrak{g}_\lambda^{-}$, respectively.
\end{proof}

By the list of non-compact simple Lie algebras and its restricted root systems, we can obtain the following fact:
\begin{fact}\label{fact:list_of_C_BC}
Suppose that $\mathfrak{g}$ is a non-compact real simple Lie algebra with $\dim_\mathbb{R} \mathfrak{g}_\lambda =1$ $($thus, $\mathfrak{g}$ is not isomorphic to one of $\mathfrak{su}^*(2k)$, $\mathfrak{so}(n-1,1)$, $\mathfrak{sp}(p,q)$, $\mathfrak{e}_{6(-26)}$ nor $\mathfrak{f}_{4(-20)}$$)$.
Then the type of the restricted root system of $\mathfrak{g}$ is $C$ or $BC$ if and only if $(\mathfrak{g},\mathfrak{k})$ is Hermitian.
Here, we consider the root system of type $A_1$, $B_2$ as $C_1$, $C_2$, respectively.
\end{fact}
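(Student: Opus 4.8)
The plan is to deduce the statement directly from the Cartan classification of non-compact real simple Lie algebras, together with the standard tables of restricted root systems and restricted-root multiplicities (as collected e.g. in \cite{Knapp02}). First I would reformulate the hypothesis: by Proposition \ref{prop:min_min}, the condition $\dim_\mathbb{R}\mathfrak{g}_\lambda=1$ says precisely that $\mathfrak{g}$ is \emph{not} one of $\mathfrak{su}^*(2k)$, $\mathfrak{so}(n-1,1)$ $(n\ge 5)$, $\mathfrak{sp}(p,q)$, $\mathfrak{e}_{6(-26)}$, $\mathfrak{f}_{4(-20)}$; equivalently, since $\lambda$ is the unique dominant longest restricted root by Lemma \ref{lem:restricted_highest_root}, the hypothesis is that the \emph{longest} root of $\Sigma(\mathfrak{g},\mathfrak{a})$ has multiplicity one. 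So the whole statement reduces to a finite check on those $\mathfrak{g}$ whose restricted root system is of type $C_r$, $BC_r$, or (under the stated convention) $A_1$ or $B_2$.

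For the implication ``$(\mathfrak{g},\mathfrak{k})$ Hermitian $\Rightarrow$ type $C$ or $BC$'' I would invoke Moore's theorem: the restricted root system of an irreducible non-compact Hermitian symmetric pair of real rank $r$ is $C_r$ in the tube case and $BC_r$ in the non-tube case, which under the identifications $A_1=C_1$, $B_2=C_2$ also accounts for the low-rank algebras $\mathfrak{sl}(2,\mathbb{R})$ and $\mathfrak{so}(p,2)$. Here I would also record, from the tables, that in every Hermitian case the longest restricted root $\pm 2e_i$ has multiplicity $1$, so that these algebras do satisfy the hypothesis $\dim_\mathbb{R}\mathfrak{g}_\lambda=1$. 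For the converse I would list the non-compact real simple Lie algebras (without complex structure) whose restricted root system is of type $C_r$, $BC_r$, $A_1$ or $B_2$ — namely $\mathfrak{sl}(2,\mathbb{R})$, $\mathfrak{su}(p,q)$, $\mathfrak{sp}(n,\mathbb{R})$, $\mathfrak{sp}(p,q)$, $\mathfrak{so}^*(2n)$, $\mathfrak{so}(p,1)$, $\mathfrak{so}(p,2)$, $\mathfrak{e}_{6(-14)}$, $\mathfrak{e}_{7(-25)}$, $\mathfrak{f}_{4(-20)}$ — and read off the multiplicity of the longest restricted root in each: it is $1$ exactly for $\mathfrak{sl}(2,\mathbb{R})$, $\mathfrak{su}(p,q)$, $\mathfrak{sp}(n,\mathbb{R})$, $\mathfrak{so}^*(2n)$, $\mathfrak{so}(p,2)$, $\mathfrak{e}_{6(-14)}$, $\mathfrak{e}_{7(-25)}$, all of which are Hermitian, whereas it equals $3$ for $\mathfrak{sp}(p,q)$, $7$ for $\mathfrak{f}_{4(-20)}$, and $p-1\ge 2$ for $\mathfrak{so}(p,1)$ with $p\ge 3$. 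Hence, among the algebras with $\dim_\mathbb{R}\mathfrak{g}_\lambda=1$, having restricted root system of type $C$ or $BC$ is equivalent to $\mathfrak{g}$ being of Hermitian type.

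The only genuinely delicate point is the bookkeeping around the low-rank coincidences and around the non-Hermitian algebras $\mathfrak{sp}(p,q)$ and $\mathfrak{f}_{4(-20)}$, which do have restricted root systems of type $C$, respectively $BC$, yet are not Hermitian: what rescues the ``only if'' direction is precisely the multiplicity hypothesis $\dim_\mathbb{R}\mathfrak{g}_\lambda=1$, which excludes them (and likewise excludes $\mathfrak{so}(p,1)$ for $p\ge 3$, whose root system is $A_1=C_1$). I would make sure the convention $A_1=C_1$, $B_2=C_2$ is applied uniformly in both directions, after which the argument is a straightforward table lookup.
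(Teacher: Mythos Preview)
Your approach is correct and is essentially what the paper intends: the statement is presented as a \emph{Fact} with no proof, preceded only by the remark ``By the list of non-compact simple Lie algebras and its restricted root systems, we can obtain the following fact.'' You have simply carried out that table lookup in detail, including the careful bookkeeping around the low-rank coincidences and the non-Hermitian $C/BC$ cases excluded by the multiplicity hypothesis.
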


Combining Lemma \ref{lem:minus_in_F} with Fact \ref{fact:list_of_C_BC}, 
we obtain Proposition \ref{prop:MA-orbits_in_g_lambda_1}. 

\section*{Acknowledgements.}
The author would like to give warm thanks to Toshiyuki Kobayashi 
for suggesting the problem 
and whose comments were of inestimable value 
for this paper.

\def\cprime{$'$}
\providecommand{\bysame}{\leavevmode\hbox to3em{\hrulefill}\thinspace}
\providecommand{\MR}{\relax\ifhmode\unskip\space\fi MR }
\providecommand{\MRhref}[2]{%
  \href{http://www.ams.org/mathscinet-getitem?mr=#1}{#2}
}
\providecommand{\href}[2]{#2}

\end{document}